\documentclass[a4paper, article, oneside, UKenglish, 12pt]{article}

\usepackage[utf8]{inputenx} % Source code
\usepackage[T1]{fontenc}    % PDF

\usepackage{lmodern}           % Latin Modern Roman
\usepackage[scaled]{beramono}  % Bera Mono (Bitstream Vera Sans Mono)
\usepackage[final]{microtype}  % Improved typography

\usepackage{amssymb}   % Extra symbols
\usepackage{amsthm}    % Theorem-like environments
\usepackage{thmtools}  % Theorem-like environments
\usepackage{mathtools} % Fonts and environments for mathematical formuale

\usepackage{graphicx}  % Tool for images
\usepackage{babel}     % Automatic translations
\usepackage{csquotes}  % Quotes
\usepackage{textcomp}  % Extra symbols
\usepackage[]{todonotes} %Notes to self

\usepackage{natbib}
\usepackage{varioref}
\usepackage[pdfusetitle]{hyperref}
\usepackage[nameinlink, capitalize, noabbrev]{cleveref}
\crefname{chapter}{Section}{Sections}

\declaretheorem[style = plain]{theorem}
\declaretheorem[style = plain]{corollary}
\declaretheorem[style = plain]{lemma}

\declaretheorem[style = remark,    numbered = no]{remark}

\DeclarePairedDelimiter{\p}{\lparen}{\rparen}   % Parenthesis
\DeclarePairedDelimiter{\sqb}{\lbrack}{\rbrack}
\DeclarePairedDelimiter{\crl}{\lbrace}{\rbrace} % Set
\DeclarePairedDelimiter{\norm}{\lVert}{\rVert}  % Norm

\DeclareMathOperator{\Tr}{Tr}
\DeclareMathOperator{\E}{E}
\DeclareMathOperator{\Var}{Var}

\newcommand{\R}{\mathbb{R}}   % Real numbers
\newcommand{\prconv}{\overset{\Pr}{\to}}
\newcommand{\distconv}{\overset{d}{\to}}
\newcommand{\opr}{o_{p}}
\newcommand{\Opr}{O_{p}}

\newcommand{\thetahat}{\hat{\theta}}
\newcommand{\lambdahat}{\hat{\lambda}}
\newcommand{\alphahat}{\hat{\alpha}}
\newcommand{\betahat}{\hat{\beta}}
\newcommand{\Dhat}{\hat{D}}

\newcommand{\CV}{\textup{CV}}
\newcommand{\TE}{\textup{TE}}
\newcommand{\noti}{_{(-i)}}

\usepackage[left=1in, top=1.05in]{geometry}
\usepackage{setspace}
\begin{document}

\centerline{\large\bf The asymptotic effect of tuning parameters}
\vspace{.4cm} 
\centerline{Ingrid Dæhlen$^{1,2}$, Nils Lid Hjort$^1$ and Ingrid Hobæk Haff$^1$} 
\vspace{.4cm} 
\centerline{$^1$ Department of Mathematics, University of Oslo}
\centerline{$^2$ Norwegian Computing Center, Post box 114 Blindern, Oslo, 0314, Norway}
\vspace{.4cm} 
Address for correspondence: Ingrid Dæhlen, Department of Mathematics, University of Oslo, Moltke Moes vei 35, 0851 Oslo, Norway, email: ingrdae@math.uio.no

\begin{abstract}
	
	Tuning parameters are parameters involved in an estimating procedure for the purpose of reducing the risk of some other estimator. Examples include the degree of penalization in penalized regression and likelihood problems, as well as the balance parameter in hybrid methods. Typically tuning parameters are set to the minimizers of some estimator of the risk, a step which introduces additional randomness and makes standard methodology inapplicable. We derive precise asymptotic theory for this situation. Our framework allows for smooth, but otherwise arbitrary, loss functions and for the risk to be estimated by cross-validation procedures. Results include consistency of the optimal estimator towards a well-defined quantity and asymptotic normality after proper scaling and centring. We give explicit forms and estimators for the limiting variance matrix and results sharply characterizing the distance from the training error to the cross-validated estimator of the risk.
\end{abstract}
\vspace{9pt}
\noindent {Keywords:}
cross validation, information criteria, large-sample theory, regularisation, two-stage estimators
\section{Introduction}\label{section:Introduction}

Suppose we have data $Z_1,\ldots,Z_n$ and wish to estimate a parameter $\theta$. Assume further that for each fixed value of some other parameter $\lambda$ this can be done by maximising a function $\Gamma_n(\theta,\lambda)$ defined in terms of the data. Examples include hybrid estimators where $\Gamma_n(\theta,\lambda) = \lambda\gamma^1_n(\theta) + (1-\lambda)\gamma_n^2(\theta)$ and penalized likelihood methods, where for each $\lambda$, $\theta$ is estimated by the maximizer of $\ell_n(\theta) - \lambda R\p{\theta}$ for some function $R$. This latter situation covers the ridge regression. An example of the former case is the hybrid generative-discriminative estimator defined as the maximizer of $\lambda\ell_n^{\textup{Gen}}(\theta) + (1-\lambda)\ell_n^{\textup{Disc}}(\theta)$ where $\ell_n^{\text{Gen}}$ and $\ell_n^{\textup{Disc}}$ are the log-likelihoods in generative and corresponding discriminative models (see e.g. \citet{bouchard2004tradeoff}).

For each fixed $\lambda$, we can maximize $\Gamma_n(\theta,\lambda)$ with respect to $\theta$ to get the estimator $\thetahat(\lambda)$. Furthermore, if $\Gamma_n$ is sufficiently smooth in $\theta$ and a relatively standard set of regularity conditions is satisfied, $\thetahat(\lambda)$ is consistent for a well-defined quantity $\theta_0(\lambda)$ and $\sqrt{n}\crl{\thetahat(\lambda)-\theta_0(\lambda)}$ converges in law to some known limit distribution. In practice, however $\lambda$ is rarely fixed, and is instead set to minimize some estimated loss, usually approximated by means of an information criterion or cross-validation. There is therefore an additional randomness involved when working with tuning parameters. Because of this, the standard limit results for $\thetahat(\lambda)$ are not immediately applicable to $\thetahat(\lambdahat)$. In practice, this point is often ignored and inference about $\theta$ is made by treating $\lambda$ as fixed at $\lambdahat$ and by using pointwise results for $\thetahat(\lambda)$. Using such a method ignores part of the fitting procedure and added randomness introduced by tuning $\lambda$, however. Because of this, the variance of $\thetahat(\lambdahat)$ tends to be underestimated.

\cref{fig:Motivating example old fits} illustrates the ideas outlined above for a data set concerning female Pima Indians and prevalence of diabetes (publicly available in e.g. the R-package \texttt{mlbench}). In \cref{section:Pima indians} we fit a penalized logistic regression model to these data where the tuning parameter dictating the size of the regularization term is set to the minimizer of the cross-validated Brier score. For each regression coefficient, \cref{fig:Motivating example old fits}  shows histograms based on 2000 non-parametric bootstrap iterations, together with the densities in approximate distributions. The dotted curve corresponds to the distribution we get when using the pointwise results as described in the previous paragraph. The dashed lines correspond to the alternative approximate distributions derived in this article, which take the tuning procedure into account. Note that the curves corresponding to the pointwise estimator seem a bit too narrow, while the curves corresponding to the new estimators are less sharp. This is a consequence of including and not including the effect of the tuning step in the limiting distribution.

\begin{figure}
	\centering
	\includegraphics[width=0.95\textwidth]{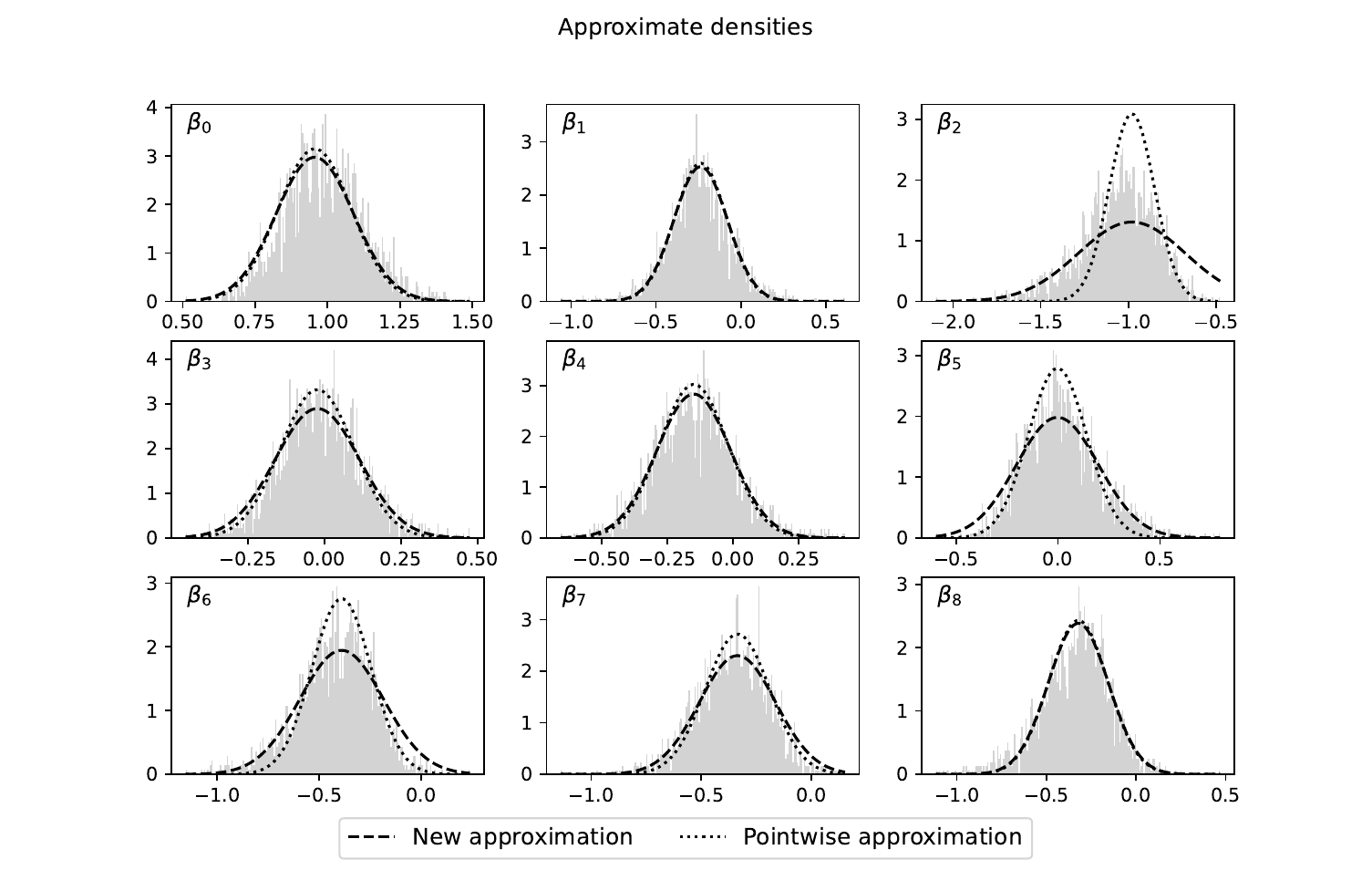}
	\caption{\label{fig:Motivating example old fits} Histograms of draws from the bootstrap distribution of $\betahat(\lambdahat)_j$ for $j=0,\ldots,8$ together with the density in the approximate distribution.}
\end{figure}

In the present article, we fill a gap in the literature by deriving precise limit results for the estimator $\thetahat(\lambdahat)$, allowing inference about $\theta$ to be made while taking the added randomness from tuning $\lambda$ into account. We will start by deriving limit results when $\lambdahat$ is estimated by minimising the relatively simpler training error (\cref{section:Two-staged tuning}) or an information criterion (\cref{section:Information criteria}). We also derive estimators for the limiting variance matrix of $\thetahat(\lambdahat)$ (\cref{section:Estimators}). In \cref{section:Crossvalidation}, we show that the results of \cref{section:Two-staged tuning} hold true also when the cross validated error is used to estimate $\lambda$. This will require us to take a closer look at the cross validation procedure, and in \cref{section:Proving Stone} we show a result proving exactly how far off the training error is from the cross-validated counterpart. In \cref{section: When phi = psi}, we consider a particular situation where the theory simplifies greatly and in \cref{section:Illustrations}, we go though a couple of applications. In particular, we will return to the above analysis of diabetes in Pima Indians in \cref{section:Pima indians}. Lastly, \cref{section:Concluding remarks} contains some concluding remarks.

\subsection{Related work}
% Consult the supplementary material SECTION XXX for an illustration of a situation when ARCONES and related results can be used, and SECTION XXX for a discussion utilizing similar arguments as ARCONES when the limit of $\hat\lambda$ is ill-defined.

To investigate the joint limiting behaviour of $\hat\theta(\hat\lambda)$ and $\hat\lambda$, we reformulate the tuning procedure as a two-stage estimation problem and show that $\thetahat(\lambdahat)$ can be written as components of a Z-estimator (see e.g.~\citet[Ch.~5]{van2000asymptotic}). Many of our proofs are therefore based upon and inspired by results for two-stage estimators, see e.g.~\citet{joe2005asymptotic}, \citet{ko2019model} or \citet{daehlen2024hybrid}, and bears similarities with the framework of generalized estimating equations used frequently in econometrics, see e.g.~\citet{hansen1982large}, \citet{hall2003large} or for a more thorough introduction, \citet[Ch. ~5]{shao2003mathematical}.

%Our results are different from those of \citet{arcones2005convergence} in yet another way, as his results are based on theory of stochastic processes, while we use the framework of Z-estimators. We reformulate the tuning procedure as a two-stage estimation problem and show that $\thetahat(\lambdahat)$ can be written as components of a Z-estimator (see e.g.~\citet[Ch.~5]{van2000asymptotic}). Many of our proofs are therefore based upon and inspired by results for two-stage estimators, see e.g.~\citet{joe2005asymptotic}, \citet{ko2019model} or \citet{daehlen2024hybrid}, and bears similarities with the framework of generalized estimating equations used frequently in econometrics, see e.g.~\citet{hansen1982large}, \citet{hall2003large} or for a more thorough introduction, \citet[Ch. ~5]{shao2003mathematical}.

To apply the two-stage arguments also in the context of cross-validation, we derive sharp results characterizing the difference between the cross-validated estimator of the risk and the training error in \cref{thm:Extension of Stone}. This theorem can be seen as an extension of the result given in \citet{stone1977asymptotic}, where the author writes that the cross-validated likelihood is asymptotically equivalent to the AIC. \citet{stone1977asymptotic} only writes down an informal argument, however, and such intuitive explanations are often repeated when the paper is cited, see e.g.~\citet{claeskens2008model} and \citet{konishi2008information}. To the best of our knowledge, no formal proof of the statement has ever been given, and we therefore believe that our \cref{thm:Extension of Stone} fills this gap in the literature. That being said, recently there has been some work related to approximations to the LOOCV (see \cite{stephenson2020approximate} for an overview), and the theoretical results concerning these approximations bear resemblance to our work (see in particular \cite[Le.~3]{beirami2017optimal}). We still believe our theorem to be novel as it is more general than that of \citet{stone1977asymptotic} since it allows for general $Z$-estimators instead of only maximum likelihood estimators, and the functions used in the cross-validation procedure and estimation of $\theta$ need not be the same.

%To apply the two-stage arguments also in the context of cross-validation, we derive sharp results characterizing the difference between the cross-validated estimator of the risk and the training error in \cref{thm:Extension of Stone}. This theorem can be seen as an extension of the result given in \citet{stone1977asymptotic}, were the author writes that the cross-validated likelihood is asymptotically equivalent to the AIC. \citet{stone1977asymptotic} only writes down an informal argument, however, and such intuitive explanations are often repeated when the paper is cited, see e.g.~\citet{claeskens2008model} and \citet{konishi2008information}. To to the best of our knowledge no formal proof of the statement has ever been given, and we therefore believe that our \cref{thm:Extension of Stone} fills this gap in the literature. Furthermore, our theorem is more general than that of \citet{stone1977asymptotic} as it allows for general $Z$-estimators instead of only maximum likelihood estimators, and the functions used in the cross-validation procedure and estimation of $\theta$ need not be the same.

We also use \cref{thm:Extension of Stone} to discuss whether cross-validation is precise enough to capture the difference between different models, a topic which has been discussed by e.g.~\citet{shao1993linear} or \citet{shao1997asymptotic} in the context of linear models. Furthermore, our theory allows us to characterize the optimism of the training error, something which is studied in e.g.~\citet{efron1983estimating}, \citet{efron1986biased}, \citet{tibshirani1999covariance} or \citet{efron2004estimation}, albeit with a different approach.

Our methodology is in some sense related to the field of post-selection inference, see e.g.~\citet{berk2013valid}, \citet{lee2016exact}, \citet{bachoc2020uniformly} or indeed \citet{kuchibhotla2022post} for a recent review. In this field, the distribution of regression parameters is derived in a way that takes the model selection procedure into account. This is typically done by either conditioning on the model selection procedure or by creating confidence intervals by taking the supremum over all candidate models. We, on the other hand, include the selection of continuous tuning parameters into the fitting procedure itself. Because of this, the field of post-selection inference is related to the present article in spirit only. Furthermore, we provide explicit forms and estimators for the limiting variance of $\thetahat(\lambdahat)$, something which is not obtained though the most popular post-selection inference techniques (see e.g.~the aforementioned review article).

Another topic which bears some similarities with our work is the research concerning asymptotic optimality of different parameter tuning schemes. For a given prediction depending on some tuning parameter $\lambda$, these works investigate its distance to the ``best'' prediction we can possibly make, when $\lambda$ is tuned according to some criterion.  \citet{craven1978smoothing}, \citet{li1985stein} and \citet{speckman1985spline}, \citet{li1986asymptotic} and \citet{li1987asymptotic} all investigate this problem in linear models, and more recently \citet{mu2018asymptotic} and \citet{mu2021asymptotic} have proven asymptotic optimality in a more advanced context. See also \citet{mu2021asymptotic} which contains a more comprehensive list of relevant literature. So far, however, the main concern of this field of research has been whether the tuning procedure ensures that the obtained and optimal predictions are close, rather than the asymptotic effect tuning parameters have on the limit distribution of $\thetahat(\lambdahat)$. We believe this sets our results apart from the literature of asymptotic optimality.

To our knowledge, no results concerning the limiting distribution of $\thetahat(\lambdahat)$ when $\lambdahat$ is set to the minimizer of some general risk estimator exists in the literature. \citet{arcones2005convergence} studies a similar situation, by deriving limit results for $\thetahat(\lambdahat)$ when $\lambdahat$ is set to the minimizer of some estimator of the limiting variance of $\thetahat(\lambda)$. This result is closely related to our theorems, but Arcones' framework only works for one-dimensional parameters $\theta$ and $\lambda$. Furthermore, $\lambdahat$ cannot minimize a general loss function or a cross-validated estimator of the risk. 

Our results are different from those of \citet{arcones2005convergence} in yet another way. \cite{arcones2005convergence}, as well as many others (e.g.~\cite{dodge2000adaptive}, \cite{kato2009asymptotics}, \cite{germain2010weak} and \cite[Se.~10.2]{spokoiny2025sharp}), shows that $S_n(\lambda) = \sqrt{n}\left[\hat\theta(\lambda)-\theta_0(\lambda)\right]$ converges as a stochastic process to a limiting Gaussian $S(\lambda)$. Such results ensure that $\hat\theta(\lambda)$ behaves as expected for all values of $\lambda$ and could in theory be used to prove our results where the asymptotic behaviour of $\hat\lambda$ known. This is pointed out in \cite[Se.~7.2 and 10.2]{dodge2000adaptive}. In practice, however, the behaviour of the tuning parameter is not known, and very few attempts have been made at investigating this. \cite{homrighausen2014leave} and \cite{chetverikov2021cross} have done some work on the Lasso, \cite[sec.~7.2]{dodge2000adaptive} studies the behaviour of $\thetahat(\lambdahat)$ when $\thetahat(\lambda)$ is a trimmed mean in the context of one explicitly defined $\lambdahat$ and \cite{arcones2005convergence} considers a couple of specific tuning schemes, but apart from these papers, virtually no theoretical discussion on how $\hat\lambda$ behaves asymptotically and how this in turn impacts the limiting distribution of $S_n(\hat\lambda)$ seem to be present in the literature. This is especially true when cross-validation is used to tune $\hat\lambda$. We therefore believe the current paper provides a significant contribution to statistical estimation theory. Consult the supplementary material Section S.2 for a discussion utilizing similar arguments as \cite{arcones2005convergence} when no natural limit for $\hat\lambda$ exists.

\subsection{Notation and definitions}\label{section:Notation}
Before we begin with the theory we will introduce notation which will be used throughout this article. For a given function $f(\theta)$ we will let $\nabla_{\theta = \theta_0} f(\theta)$ and $H_{\theta = \theta_0} f(\theta)$ be the gradient and Hessian matrix of $f$ at $\theta_0$ respectively, while $\partial_{\theta = \theta_0}f(\theta) = f'(\theta_0) = \partial f/\partial\theta(\theta_0)$ will denote the Jacobian matrix of $f$ at $\theta_0$. When there is no room for misunderstanding, we will simplify notation and write $\nabla_{\theta_0}f$, $H_{\theta_0}f$ and $\partial_{\theta_0}f$ rather than the above. Furthermore, we will often identify matrices in $\R^{p\times q}$ with vectors in $\R^{pq}$ in the following way $A = (a_1,a_2,\ldots,a_q)\in\R^{p\times q}\leadsto(a_1^T,a_2^T,\ldots,a_q^T)^T\in\R^{pq}$ and hence, if $A$ is a matrix, $\norm{A}= \p{\sum_j\sum_k a_{jk}^2}^{1/2}$. We will also let $\overset{\Pr}{\to}$ and $\overset{d}{\to}$ denote convergence in probability and distribution/law respectively.

We will let $Z_1,\ldots,Z_n\in\R^d$ be independent and identically distributed (i.i.d.) variables from a distribution $F$ and assume that for each fixed value of $\lambda\in\R^q$, $\theta\in\R^p$ is estimated by solving the equation $n^{-1}\sum_{i=1}^{n}\varphi(Z_i,\theta,\lambda) = 0$, for some function $\varphi\colon\R^{d+p+q}\to\R^p$. We will let $\thetahat(\lambda)$ denote the solution. If $\thetahat(\lambda)$ is a maximizer of an expression on the form $n^{-1}\sum_{i=1}^n\phi(Z_i,\theta,\lambda)$ for some function $\phi$, this is satisfied with $\varphi(z,\theta,\lambda) = \nabla_\theta\phi(z,\theta,\lambda)$ under general conditions. In particular, if $\thetahat(\lambda)$ maximises a penalized likelihood of the form $n^{-1}\sum_{i=1}^n\log f(Y_i,\theta) -\lambda P(\theta)$, $\varphi(z,\theta,\lambda)  = u(Y_i,\theta) - \lambda P'(\theta)$ where $u$ is the score function in the model. We will also comment that regression and classification models fit into this framework by considering the covariates as random and letting $Z = (Y,X^T)^T$ where $Y$ is the response and $X$ the predictor. 

Lastly, fix a loss function $\psi\colon\R^{d+p}\to\R$. The training error, which will be denoted by TE$(\lambda)$, is equal to $n^{-1}\sum_{i=1}^n\psi\crl{Z_i,\thetahat(\lambda)}$. The leave-one-out cross validation (LOOCV) estimator of the risk is $n^{-1}\sum_{i=1}^n\psi\crl{Z_i,\thetahat_{(-i)}(\lambda)}$ and will be referenced by CV$(\lambda)$. We will assume that $\lambdahat$ is set to the minimizer of some error estimator and refer to this process as ``tuning'' $\lambda$

\section{Two-stage tuning}\label{section:Two-staged tuning}
We will start by investigating the asymptotic properties of $\thetahat(\lambdahat)$ when $\lambdahat$ is set by tuning with respect to the training error. If this function is convex in $\lambda$ and the minimizer lies in the interior of the parameter space for $\lambda$, this is equivalent to solving TE'$(\lambda)=0$. Hence, under these conditions, $\lambdahat$ satisfies TE'$(\lambdahat) =  \sum_{i=1}^n\thetahat'(\lambdahat)^T\nabla\psi\sqb{Z_i,\thetahat(\lambdahat)}$. If $n^{-1}\sum_{i=1}^n\partial_{\thetahat(\lambdahat)}\varphi(Z_i,\theta,\lambdahat)$ is invertible, the quantity $\thetahat'(\lambdahat)$ exists by the implicit function theorem. Furthermore, it satisfies $n^{-1}\sum_{i=1}^n\sqb{\partial_{\thetahat(\lambdahat)}\varphi(Z_i,\theta,\lambdahat)\thetahat'(\lambdahat) +\partial_{\lambdahat}\varphi\crl{Y_i,\thetahat(\lambdahat),\lambda}}=0$ by the same result. Because of these relations as well as the definition of $\thetahat(\lambda)$, we can express the vector $\alphahat = (\thetahat(\lambdahat)^T,\lambdahat^T,\thetahat'(\lambdahat)^T)^T\in\R^{p+q+pq}$ as the solution to a set of equations. This implies that the full vector $\alphahat$ is what is called a Z estimator, and these are typically consistent for well-defined quantities and asymptotically normal after proper scaling and centring, see e.g.~\citet[Ch.~5]{van2000asymptotic}. Since $\alphahat$ is Z-estimator by the previous arguments, we would expect the same to be true for $\alphahat$ (and in turn also $\thetahat(\lambdahat)$). The following theorem shows that this is indeed correct.

\begin{theorem}\label{thm:alpha TE}
	For each fixed $\lambda$, let $\thetahat(\lambda)$ be the unique solution to $n^{-1}\sum_{i=1}^n\varphi(Z_i,\theta,\lambda)=0$. Furthermore, assume that $\lambdahat$ is the unique solution to TE'$(\lambda)=\opr(1/\sqrt{n})$. Let $\alpha = (\theta^T,\lambda^T,D^T)^T\in\R^{p+q+pq}$ and $\alphahat = (\thetahat(\lambdahat)^T,\lambdahat^T,\thetahat'(\lambdahat)^T)^T$. Define the function $\eta\colon\R^{d+p+q+pq}\to\R^{p+d+pq}$ as $\eta(z,\alpha) = (\eta_1(z,\alpha)^T,\eta_2(z,\alpha)^T,\eta_3(z,\alpha)^T)^T$ where $\eta_1(z,\alpha) = \varphi(z,\theta,\lambda)$, $\eta_2(z,\alpha) =  D^T\nabla_\theta\psi(z,\theta)$ and $\eta_3(z,\alpha) = \partial_\theta\varphi(z,\theta,\lambda)D + \partial_\lambda\varphi(Z,\theta,\lambda)$. Assume the solution to $0 = \Psi(\alpha) = \E\crl{\eta(Z,\theta,\lambda,D)}$ is unique and equal to $\alpha_0$. Then $\alphahat$ is consistent for $\alpha_0$ and furthermore,
	\begin{equation}\label{eq:Influence of alpha TE}
		\alphahat = \alpha_0 - n^{-1}\Psi'(\alpha_0)^{-1}\sum_{i=1}^n\eta\p{Z_i,\theta_0,\lambda_0,D_0}+\opr(1/\sqrt{n}),
	\end{equation}
	provided the function $\Psi$ is continuously differentiable in a neighbourhood of $\alpha_0$, that $\Psi'(\alpha_0)$ is non-singular and that in a neighbourhood of $\alpha_0$ the norm of $\eta(z,\alpha)$ and all first order partial derivatives of $\eta$ with respect to $\alpha$ are bounded by functions $m_0(z)$ and $m_1(z)$ respectively with $\E m_0(Z)^2,\,\E m_1(Z)^2<\infty$. In particular, $\sqrt{n}\p{\alphahat-\alpha_0}$ converges in law to a N$\p{0,V_\alpha}$ distribution where $V_\alpha = \Psi'(\alpha_0)^{-1}$ $\Var\eta(Z,\alpha_0)\crl{\Psi'(\alpha_0)^{-1}}^T$.
\end{theorem}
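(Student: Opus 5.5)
The plan is to reduce the statement to the standard theory of Z-estimators (\citet[Ch.~5]{van2000asymptotic}) applied to the stacked map $\eta$. Write $\Psi_n(\alpha) = n^{-1}\sum_{i=1}^n\eta(Z_i,\alpha)$.

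\textbf{Step 1: $\alphahat$ is an approximate zero of $\Psi_n$.} Each block is checked separately.
\begin{itemize}
\item The first block of $\Psi_n(\alphahat)$ is exactly zero, by the definition of $\thetahat(\lambdahat)$.
\item The third block is exactly zero, by differentiating the identity $n^{-1}\sum_i\varphi\{Z_i,\thetahat(\lambda),\lambda\}=0$ in $\lambda$ (the implicit function theorem, as in the discussion preceding the theorem).
\item The second block equals $\thetahat'(\lambdahat)^T n^{-1}\sum_i\nabla_\theta\psi\{Z_i,\thetahat(\lambdahat)\}=\mathrm{TE}'(\lambdahat)=\opr(n^{-1/2})$ by assumption.
\end{itemize}
Hence $\Psi_n(\alphahat)=\opr(n^{-1/2})$.

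\textbf{Step 2: consistency.} Here I would invoke the Z-estimator consistency theorem (\citet[Thm.~5.9]{van2000asymptotic}). Its conditions are uniform convergence $\sup_{\alpha\in K}\norm{\Psi_n(\alpha)-\Psi(\alpha)}\to0$ in probability and well-separatedness of the zero $\alpha_0$.
\begin{itemize}
\item Uniform convergence holds on a compact neighbourhood $K$ of $\alpha_0$ by a uniform law of large numbers (\citet[Ex.~19.8]{van2000asymptotic}). This uses the envelope $m_0$ and the continuity in $\alpha$ implied by the derivative bound $m_1$.
\item Well-separatedness on $K$ follows from continuity of $\Psi$ and uniqueness of the zero.
\end{itemize}
This is the delicate part, because the assumptions are only local. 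I would first try to argue that $\alphahat$ eventually lies in $K$. One route is to use the uniqueness of $\thetahat(\lambda)$ and of the solution of $\mathrm{TE}'$: by the implicit-function/inverse-function argument, $\Psi_n$ has, with probability tending to one, a zero $\tilde\alpha$ near $\alpha_0$. This holds since $\Psi_n'\to\Psi'(\alpha_0)$ uniformly near $\alpha_0$ and the latter is nonsingular. Uniqueness of the defining equations then forces $\alphahat=\tilde\alpha$. The key observation is that the $\lambda$-component of any zero of $\Psi_n$ solves $\mathrm{TE}'=0$ with the corresponding $\theta$ and $D$ determined by it. This gives consistency and is the step I expect to be the main obstacle.

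\textbf{Step 3: linearization.} Given consistency, expand by the mean value theorem row by row:
\[
\Psi_n(\alphahat)=\Psi_n(\alpha_0)+\tilde\Psi_n'(\alphahat-\alpha_0),
\]
where $\tilde\Psi_n'$ has rows of $\Psi_n'$ evaluated at intermediate points. The bound $m_1$, a uniform LLN for the derivatives, and consistency give $\tilde\Psi_n'\to\Psi'(\alpha_0)$ in probability. Here one uses $\E\,\partial_\alpha\eta=\Psi'$, by dominated convergence with $m_1$. This matrix is invertible with probability tending to one.

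\textbf{Step 4: conclusion.} Solving the expansion gives
\[
\alphahat-\alpha_0=-\Psi'(\alpha_0)^{-1}\Psi_n(\alpha_0)+\opr(\norm{\alphahat-\alpha_0})+\opr(n^{-1/2}).
\]
By the CLT, $\Psi_n(\alpha_0)=\Opr(n^{-1/2})$, using that $\E\eta(Z,\alpha_0)=0$ and $\E m_0^2<\infty$. It follows that $\alphahat-\alpha_0=\Opr(n^{-1/2})$, and this yields \eqref{eq:Influence of alpha TE}. Finally, the multivariate CLT and Slutsky's lemma give $\sqrt n(\alphahat-\alpha_0)\distconv N(0,V_\alpha)$, with $V_\alpha=\Psi'(\alpha_0)^{-1}\Var\eta(Z,\alpha_0)\{\Psi'(\alpha_0)^{-1}\}^T$.
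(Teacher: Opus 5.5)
Your overall architecture is the paper's. Step~1 is the paper's ``arguments preceding the theorem''. The route you settle on in Step~2 (a zero of $\Psi_n$ exists near $\alpha_0$ with probability tending to one, and uniqueness identifies it with $\alphahat$) is what the paper obtains from Lemma~S1.1 and Corollary~S1.2 of the supplement, a Brouwer-fixed-point variant of van der Vaart's Theorem~5.42. Steps~3--4 redo van der Vaart's Theorem~5.21, which the paper simply cites.

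Your identification of an arbitrary zero of $\Psi_n$ with $\alphahat$ is more explicit than the paper, which takes the ``at most one zero'' hypothesis of Lemma~S1.1 for granted. Two small points there. First, it uses that $\lambdahat$ is an exact zero of $\TE'$, as the paper tacitly does. Second, pinning down $D$ from the third block needs $n^{-1}\sum_i\partial_\theta\varphi$ to be invertible near $(\theta_0,\lambda_0)$. This does follow from the hypotheses: if $w^TJ=0$, the third block of $\Psi(\alpha_0)=0$ gives $w^T\E\partial_\lambda\varphi(Z,\theta_0,\lambda_0)=w^TJD_0=0$. So $w^T$ annihilates the first block-row $(-J,\E\partial_\lambda\varphi,0)$ of $\Psi'(\alpha_0)$, contradicting nonsingularity.

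The step not covered by the stated hypotheses is your use of uniform convergence of the empirical Jacobian $\Psi_n'(\alpha)=n^{-1}\sum_i\partial_\alpha\eta(Z_i,\alpha)$ near $\alpha_0$. You need it in Step~2 to produce the nearby zero by an inverse-function argument, and in Step~3 to get $\tilde\Psi_n'\prconv\Psi'(\alpha_0)$ at the uncontrolled mean-value points. The theorem only gives an envelope $m_1$ for $\partial_\alpha\eta$, whose entries involve second derivatives of $\varphi$ and $\psi$. It does not give continuity of $\alpha\mapsto\partial_\alpha\eta(z,\alpha)$, and an envelope alone does not yield a uniform law of large numbers. Continuity of the population derivative $\Psi'$ does not help at random intermediate points.

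The paper's route needs no such condition, because the $m_1$ bound makes $\alpha\mapsto\eta(z,\alpha)$ Lipschitz near $\alpha_0$ with square-integrable constant $m_1(z)$. This gives two things:
\begin{itemize}
\item uniform convergence of $\Psi_n$ itself to $\Psi$, which is all the Brouwer argument of Lemma~S1.1 requires, since only $\Psi$ is inverted there;
\item a Donsker class, so that consistency yields $\sqrt n\{(\Psi_n-\Psi)(\alphahat)-(\Psi_n-\Psi)(\alpha_0)\}=\opr(1)$.
\end{itemize}
You then only need to linearize the population map, $\Psi(\alphahat)=\Psi'(\alpha_0)(\alphahat-\alpha_0)+\opr(\norm{\alphahat-\alpha_0})$, which the assumed differentiability of $\Psi$ provides.

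So either add continuity of $\partial_\alpha\eta$ in $\alpha$ as a hypothesis, or make two replacements: your Step~3 expansion of $\Psi_n$ by this stochastic-equicontinuity argument, and your Step~2 inverse-function argument by the Brouwer one. The rest of your proof then goes through.
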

\begin{remark}
	By Leibniz' integral formula, the function $\Psi$ is smooth in $\alpha$ as long as the same holds true for $\eta$. Furthermore $\alpha_0$ is the unique solution to $\Psi(\alpha) = 0$ as long as $\theta(\lambda)$, defined as the zero of $\E\varphi(Z,\theta,\lambda)$, is well defined for all $\lambda$ and there is only a single $\theta\in\{\theta(\lambda)\,|\,\lambda\in\Lambda)\}$ minimizing the limiting risk, $\E\psi[Z,\theta(\lambda)]$. For a discussion on what happens this condition fails to hold, see Section S2 in the supplementary material.
\end{remark}
\begin{proof}
	By the arguments preceding the theorem, $\sum_{i=1}^n\eta(Z_i,\alphahat) = 0$. Hence, consistency follows from the assumptions and Lemma S1.1 and Corollary S1.2 in the supplementary material. \cref{eq:Influence of alpha TE} follows from \citet[Th.~5.21]{van2000asymptotic}.
\end{proof}

In practice, we are rarely interested in the asymptotic behaviour of the full vector $\alphahat$, but rather in the limiting properties of $\thetahat(\lambdahat)$ alone. 

\begin{corollary}\label{cor:thetahat TE}
	Assume the conditions of \cref{thm:alpha TE} hold true, and for each $\lambda$ let $\theta(\lambda)$ be solution to $\E\varphi(Z,\theta,\lambda) = 0$. Then $\thetahat(\lambdahat)$ is consistent for $\theta_0 = \theta(\lambda_0)$ where $\lambda_0$ is the solution to $\nabla_\lambda\E\psi\crl{Z,\theta_0(\lambda)}=0$. Let $D = \theta'(\lambda_0)$, $Z_1 = H_{\lambda_0} \E\psi\crl{Z,\theta_0(\lambda)}$, $Z_2 = H_{\theta_0}\E\psi(Z,\theta)$, $J = -\partial_{\theta_0}\E\varphi(Z,\theta,\lambda_0)$ and $b = \nabla_{\theta_0}\E\psi(Z,\theta)$, then
	\begin{equation}\label{eq:Influence of thetahat TE}
		\begin{aligned}
		\thetahat(\lambdahat) = \theta_0 &+A_1\frac{1}{n}\sum_{i=1}^n\varphi(Z_i,\theta_0,\lambda_0) + A_2\frac{1}{n}\sum_{i=1}^n\nabla_{\theta_0}\psi\p{Z_i,\theta} \\
		&+ A_3\frac{1}{n}\sum_{i=1}^n\crl{\partial_{\theta_0}\varphi(Z_i,\theta,\lambda_0)D + \partial_{\lambda_0}\varphi(Z_i,\theta_0,\lambda)},
		\end{aligned}
	\end{equation}
	where $A_1 = J^{-1} - DZ_1^{-1}\crl{D^TZ_2 + W}J^{-1}$, $A_2 = -DZ_1^{-1}D^T$ and $A_3=-DZ_1^{-1}M$, where $M$ is a $q\times pq$ block diagonal matrix where the diagonal blocks are each equal to $b^TJ^{-1}$ and $W$ is a $q\times p$ matrix which can be written as $W = (\p{b^TJ^{-1}W^1}^T,\ldots,\p{b^TJ^{-1}W^q}^T)^T$ with
	\begin{equation*}
		W^j = \begin{pmatrix}
			D_j^TH_{\theta_0}\E\varphi^1(Z,\theta,\lambda_0)\\
			D_j^TH_{\theta_0}\E\varphi^2(Z,\theta,\lambda_0)\\
			\vdots\\
			D_j^TH_{\theta_0}\E\varphi^p(Z,\theta,\lambda_0)
		\end{pmatrix} + \partial_{\lambda^j_0}\partial_{\theta_0}\E\varphi(Z,\theta,\lambda),
	\end{equation*}
	where $D_j$ denotes the $j$-th column of $D$. In particular, $\sqrt{n}\crl{\thetahat(\lambdahat)-\theta_0(\lambda_0)}\to\text{N}\{0,A^*$ $K^*\p{A^*}^T\}$ where $A^* = (A_1,A_2,A_3)$ and $K^* = \Var\eta(Z,\theta_0,\lambda_0,D)$ with $\eta$ defined in \cref{thm:alpha TE}.
\end{corollary}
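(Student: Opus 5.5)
The plan is to read off the first $p$ rows of the linear representation \eqref{eq:Influence of alpha TE} in \cref{thm:alpha TE}. Consistency of $\thetahat(\lambdahat)$ for $\theta_0=\theta(\lambda_0)$ is immediate from consistency of $\alphahat$. It remains to identify $\alpha_0$. Setting $\Psi(\alpha)=0$ blockwise gives three conditions.
\begin{itemize}
\item $\E\varphi(Z,\theta,\lambda)=0$, so $\theta=\theta(\lambda)$.
\item $\partial_\theta\E\varphi\, D+\partial_\lambda\E\varphi=0$, so $D=\theta'(\lambda)$ by implicit differentiation.
\item $D^T\nabla_\theta\E\psi(Z,\theta)=0$, which by the chain rule is $\nabla_\lambda\E\psi\{Z,\theta(\lambda)\}=0$.
\end{itemize}
Uniqueness assumed in \cref{thm:alpha TE} then yields $\alpha_0=(\theta_0,\lambda_0,D)$. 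The asymptotic normality statement follows from the central limit theorem applied to $n^{-1}\sum\eta(Z_i,\alpha_0)$, once we show that the first block row of $-\Psi'(\alpha_0)^{-1}$ equals $A^*=(A_1,A_2,A_3)$.

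The core is therefore computing $\Psi'(\alpha_0)$ and the first block row of its inverse. Order the variables as $(\theta,\lambda,D)$. The Jacobian has the following blocks.
\begin{itemize}
\item Row 1: $\partial_\theta\E\varphi=-J$, then $\partial_\lambda\E\varphi$, then $0$.
\item Row 2, from $D^Tb(\theta)$: $D^TZ_2$, then $0$, then $M$. Here $\partial_{D_j}(D^Tb)=e_jb^T$, which gives a block structure in $b^T$.
\item Row 3: $\partial_\theta\{\partial_\theta\E\varphi\,D\}+\partial_\theta\partial_\lambda\E\varphi$, which is where the $D_j^TH_{\theta_0}\E\varphi^k$ terms arise; then $\partial_\lambda\{\cdot\}$; then $I_q\otimes(-J)$.
\end{itemize}

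I will not invert this $3\times3$ block matrix directly. Instead I will solve the linear system $\Psi'(\alpha_0)(x,y,w)^T=-(u,v,s)^T$ for $x$ by sequential elimination.
\begin{enumerate}
\item Row 3 gives $w$ in terms of $x$, $y$ and $s$ via $J^{-1}$ applied columnwise.
\item Row 1 gives $x=J^{-1}u+J^{-1}\partial_\lambda\E\varphi\,y=J^{-1}u-Dy$, using $\partial_\lambda\E\varphi=-\partial_\theta\E\varphi\,D=JD$.
\item Substitute both into Row 2 to obtain a $q\times q$ equation for $y$. Its coefficient matrix should combine to $Z_1=H_{\lambda_0}\E\psi\{Z,\theta(\lambda)\}$, using $H_\lambda\E\psi\{\theta(\lambda)\}=D^TZ_2D+\sum_k b_k\,\partial^2_\lambda\theta_k$ and the second-order implicit differentiation identity expressing $\partial^2_\lambda\theta$ through $J^{-1}$, $H_\theta\E\varphi^k$ and the cross derivatives.
\item This gives $y=-Z_1^{-1}\{D^Tv+(D^TZ_2+W)J^{-1}u+Ms\}$, and $x=J^{-1}u-Dy$ yields exactly $A_1$, $A_2$ and $A_3$.
\end{enumerate}

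The main obstacle is step 3. Verifying that the messy Row 3 derivative terms, after passing through $M$ and $J^{-1}$, reorganize into $W$ requires care. It also requires care to show that the coefficient of $y$ equals $Z_1$ rather than just $D^TZ_2D$, and this is precisely where the second derivative $\theta''(\lambda_0)$ must be identified. I would handle this coordinatewise in $j=1,\ldots,q$, differentiating $\E\varphi\{Z,\theta(\lambda),\lambda\}\equiv0$ twice. The same identity that produces $W^j$ then shows that $b^TJ^{-1}W^j$-type terms combine with $D^TZ_2D$ to give $Z_1$. Invertibility of $Z_1$ comes from the non-singularity of $\Psi'(\alpha_0)$ together with that of $J$, by the Schur-complement structure of the elimination.
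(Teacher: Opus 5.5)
Your route is the paper's own. The paper proves this by applying \cref{thm:alpha TE} together with the block-matrix inversion formulas, and your sequential elimination is exactly that computation written out: eliminate $\theta$ via row 1 and $D$ via row 3, then take the Schur complement on the $\lambda$-block. It also supplies the identity for $Z_1$ that the paper leaves implicit. There is, however, a sign error in step 2 that, as written, breaks steps 3 and 4. Row 1 reads $-Jx+JDy=-u$, so $x=J^{-1}u+J^{-1}\partial_\lambda\E\varphi\,y=J^{-1}u+Dy$, not $J^{-1}u-Dy$. With your minus sign, the coefficient of $y$ in row 2 comes out as $G-(D^TZ_2+W)D$ (notation below), which is not $Z_1$. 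Combining $x=J^{-1}u-Dy$ with your $y$ would also reverse the sign of the correction term in $A_1$ and the signs of $A_2$ and $A_3$.

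With $x=J^{-1}u+Dy$ everything closes. Take $v$ to be the $p$-vector $n^{-1}\sum_i\nabla_{\theta_0}\psi(Z_i,\theta)$, so the row-2 right-hand side is $-D^Tv$. Let $G^j=\partial_\lambda\{\partial_\theta\E\varphi\,D_j+\partial_{\lambda^j}\E\varphi\}$ at $\alpha_0$; these form the $(3,2)$ block. Row 3 then gives $w_j=J^{-1}(s_j+W^jx+G^jy)$. Row 2 becomes $(D^TZ_2+W)x+Gy+Ms=-D^Tv$, where $G$ is the $q\times q$ matrix whose $j$-th row is $b^TJ^{-1}G^j$. Differentiating $\E\varphi\{Z,\theta(\lambda),\lambda\}\equiv0$ twice gives $\partial_{\lambda^k}\partial_{\lambda^j}\theta=J^{-1}(W^jD_k+G^je_k)$, hence $Z_1=D^TZ_2D+WD+G$. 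So the $(3,2)$-block terms $G$ are needed alongside $WD$, not only the $b^TJ^{-1}W^j$ terms you mention.

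Two bookkeeping remarks. First, the $(2,3)$ Jacobian block is $I_q\otimes b^T$, whereas the paper's $M$ is $I_q\otimes b^TJ^{-1}$; the $J^{-1}$ appears only when $w$ is eliminated, so you should not call both $M$. Second, the middle block of the first row of $-\Psi'(\alpha_0)^{-1}$ is $-DZ_1^{-1}$, acting on $\bar\eta_2=D^Tv$. The matrix $A_2=-DZ_1^{-1}D^T$ arises only after factoring out $D^T$, so assemble the limiting covariance consistently: either $(A_1,-DZ_1^{-1},A_3)$ with $\Var\eta$, or $A^*$ with the variance of $(\varphi,\nabla_\theta\psi,\eta_3)$.
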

\begin{proof}
	This follows from \cref{thm:alpha TE} and formulas for the inverse of block matrices.
\end{proof}
If $q=1$, we have $M = b^TJ^{-1}$ and $W = b^TJ^{-1}W^1$ making all formulas a lot more readable. Furthermore, the expressions above often simplify greatly, see e.g.~\cref{section: When phi = psi}.

\subsection{Truncated estimators}\label{section:Truncated estimators}
In \cref{cor:thetahat TE} we assume that $\lambdahat$ is found by solving TE'$(\lambda) = 0$. This is, however, not how $\lambda$ is usually tuned in practice. Typically, $\lambdahat$ is set to the minimizer of TE$(\lambda)$ in some compact set $\Lambda$. We will now discuss how \cref{cor:thetahat TE} can be used to make inference also in this situation. For simplicity, we will take $q=1$ and $\Lambda = [0,1]$, but all results should be generalizable to higher dimensions and other parameter sets.

As long as TE is differentiable and $\lambdahat$ lies in the interior of $[0,1]$, minimising TE in $\Lambda$ is equivalent to solving TE'$(\lambda)=0$. Oftentimes, however, TE$(\lambda)$ might be increasing or decreasing with TE'$(\lambda)\neq0$ for all values of $\lambda$. In such cases $\lambdahat$ is set to 0 or 1 depending on which of the boundary points achieves the lowest value of TE$(\lambda)$, and the condition TE'$(\lambdahat)=0$ might not be satisfied. To study this more complicated estimator, we will work with a modified version of $\thetahat(\lambda)$ defined in the following way:
\begin{align}\label{eq:Thetahat_T}
	\thetahat_T = \thetahat(0)I(\lambda\leq0) + \thetahat(\lambda)I\p{0<\lambda<1} + \thetahat(1)I(\lambda\geq1),
\end{align}
where $I$ is the indicator function. This estimator agrees with $\thetahat(\lambda)$ for all $\lambda\in[0,1]$. Outside of the unit interval, however, the estimator is ``truncated'' to either $\thetahat(0)$ or $\thetahat(1)$ depending on which side of $[0,1]$ $\lambda$ lies. Hence, as long as TE$(\lambda)$ is convex and has a global minimizer $\lambdahat_G\in\R$, $\thetahat_T(\lambdahat_G)$ will be equal to $\thetahat(\lambdahat)$ where $\lambdahat$ is the minimizer of TE$(\lambda)$ in $[0,1]$. Furthermore, when the global minimizer of TE$(\lambda)$ lies in $[0,1]$, $\lambdahat$ is the global minimizer ensuring $\thetahat_T(\lambdahat_G) = \thetahat(\lambdahat)$.

\begin{theorem}\label{thm:thetahat truncated TE}
	Assume the conditions of \cref{cor:thetahat TE} hold true, and let $\thetahat_T(\lambda)$ be defined as in \eqref{eq:Thetahat_T}. Furthermore, let $\lambdahat_G$ be the global minimizer of TE$(\lambda)$. The following hold:
	\begin{itemize}
		\item[(a)]
		Assume $\lambdahat_G\prconv\lambda_0<0$ and $\sqrt{n}\crl{\thetahat(0)-\theta_0}\distconv N$, where $N$ is some zero mean normal distribution. Then $\thetahat_T(\lambdahat)\prconv\theta_0$ and $\sqrt{n}\crl{\thetahat_T(\lambdahat)-\theta_0}\distconv N$.
		\item[(b)]
		Assume $\lambdahat_G\prconv\lambda_0>1$ and $\sqrt{n}\crl{\thetahat(1)-\theta_0}\distconv N$. Then  $\thetahat_T(\lambdahat)\prconv\theta_0$ and $\sqrt{n}\crl{\thetahat_T(\lambdahat_G)-\theta_0}\distconv N$.
		\item[(c)]
		Assume $\lambdahat_G\prconv\lambda_0 = 0$ and that $\sqrt{n}\crl{\thetahat(\lambdahat_G)^T-\theta_0^T,\thetahat(0)^T-\theta_0^T,\lambdahat}^T\distconv(N_1^T,N_2^T,N_3)^T$ $\sim\text{N}(0,V)$ for some matrix $V$. Then $\thetahat_T(\lambdahat)\prconv\theta_0$ and $\sqrt{n}\crl{\thetahat_T(\lambdahat)-\theta_0}\distconv I(N_3\geq0)N_1 + I(N_3<0)N_2$.
		\item[(d)]
		Assume $\lambdahat_G\prconv\lambda_0 = 1$ and that $\sqrt{n}\crl{\thetahat(\lambdahat_G)^T-\theta_0^T,\thetahat(1)^T-\theta_0^T,\lambdahat-1}^T\distconv(N_1^T,N_2^T,N_3)^T\sim\text{N}(0,V)$ for some matrix $V$. Then $\thetahat_T(\lambdahat)\prconv\theta_0$ and $\sqrt{n}\crl{\thetahat_T(\lambdahat_G)-\theta_0}\distconv I(N_3\leq0)N_1 + I(N_3>0)N_2$.
	\end{itemize}
\end{theorem}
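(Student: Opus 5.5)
The proof handles the four cases separately. Throughout I read $\lambdahat$ in the statement as $\lambdahat_G$, the global minimizer, since $\thetahat_T$ is evaluated there. The tool in every case is the decomposition
\[
\thetahat_T(\lambdahat_G)=\thetahat(0)I(\lambdahat_G\le 0)+\thetahat(\lambdahat_G)I(0<\lambdahat_G<1)+\thetahat(1)I(\lambdahat_G\ge 1),
\]
combined with the fact that indicators of events whose probability tends to one or zero can be replaced by $1$ or $0$ at the cost of an $\opr(1/\sqrt{n})$ term. Indeed, if $\Pr(A_n)\to 0$ then $\sqrt{n}X_nI(A_n)\prconv 0$ for any sequence $X_n$, because $\Pr\{\sqrt{n}X_nI(A_n)\neq 0\}\le\Pr(A_n)$.

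\textbf{Cases (a) and (b).} For (a), $\lambdahat_G\prconv\lambda_0<0$ gives $\Pr(\lambdahat_G\le 0)\to 1$. Hence
\[
\sqrt{n}\crl{\thetahat_T(\lambdahat_G)-\theta_0}=\sqrt{n}\crl{\thetahat(0)-\theta_0}+\sqrt{n}R_n ,
\]
where $R_n$ vanishes on the event $\{\lambdahat_G\le0\}$, so $\sqrt{n}R_n\prconv0$. Slutsky's lemma then gives the limit $N$. Consistency follows because $\sqrt{n}\{\thetahat(0)-\theta_0\}$ is tight, so $\thetahat(0)\prconv\theta_0$. Case (b) is identical with $\thetahat(1)$ and the event $\{\lambdahat_G\ge1\}$.

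\textbf{Cases (c) and (d).} Here both indicators remain random, so I would instead write, on an event whose probability tends to one,
\[
\sqrt{n}\crl{\thetahat_T(\lambdahat_G)-\theta_0}=I(\sqrt{n}\lambdahat_G>0)\sqrt{n}\crl{\thetahat(\lambdahat_G)-\theta_0}+I(\sqrt{n}\lambdahat_G\le 0)\sqrt{n}\crl{\thetahat(0)-\theta_0}.
\]
This uses that $\Pr(\lambdahat_G\ge1)\to0$ because $\lambda_0=0$, and that $\lambdahat_G>0$ is equivalent to $\sqrt{n}\lambdahat_G>0$. The right-hand side is $g(X_n)$, where $X_n$ is the jointly converging vector $\sqrt{n}(\thetahat(\lambdahat_G)-\theta_0,\thetahat(0)-\theta_0,\lambdahat_G)$ and $g(x_1,x_2,x_3)=I(x_3>0)x_1+I(x_3\le0)x_2$.

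The function $g$ is continuous except on the set $\{x_3=0\}$. I would apply the continuous mapping theorem, which requires $\Pr(N_3=0)=0$, and then note that on $\{N_3=0\}$ the convention $\ge$ versus $>$ does not matter.

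The main obstacle is that $\Pr(N_3=0)=0$ holds only if $\Var N_3>0$. If $N_3$ were degenerate, $N_3=0$ almost surely, and the limit would depend on the sign behaviour of $\sqrt{n}\lambdahat_G$ at finer order. I would handle this by assuming $V$ has a nonzero $(p+1,p+1)$ entry, which is implicit in the statement and holds generically given the influence representation of \cref{thm:alpha TE}, and add a remark about the degenerate case.

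Consistency in (c) follows since both $\thetahat(\lambdahat_G)$ and $\thetahat(0)$ are $\sqrt{n}$-consistent for $\theta_0$. Case (d) is symmetric: use $\sqrt{n}(\lambdahat_G-1)$, the events $\{\lambdahat_G<1\}$ and $\{\lambdahat_G\ge1\}$, and the fact that $\Pr(\lambdahat_G\le0)\to0$.
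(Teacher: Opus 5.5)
Your argument follows the paper's proof. In (a) and (b) you replace $\thetahat_T(\lambdahat_G)$ by $\thetahat(0)$ or $\thetahat(1)$ on an event of probability tending to one; your Slutsky formulation is tidier than the paper's conditioning argument. In (c) and (d) you write the scaled error as $g(X_n)$ and apply the continuous mapping theorem. You are also more careful than the paper, which ignores the event $\{\lambdahat_G\ge 1\}$ in (c) and simply asserts that $\{x_3=0\}$ is null ``in the multivariate normal distribution''.

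The one real gap is how you dispose of the degenerate case. Assuming $\Var N_3>0$ is an added hypothesis, since the statement allows singular $V$. (This is the $(2p+1,2p+1)$ entry of $V$, not the $(p+1,p+1)$ entry.) Nor is it generic where (c) is meant to be used:
\begin{itemize}
\item In the setting of \cref{section: When phi = psi} with $\lambda_1=0$ an endpoint (for example ridge regression tuned by squared error), $b=0$, $H_{\theta_0}\E\psi(Z,\theta)=-J$ and $\eta_2(z,\alpha_0)=D^T\eta_1(z,\alpha_0)$. The $\lambda$-component of the influence function in \cref{thm:alpha TE} therefore vanishes, and $N_3\equiv 0$.
\item For linear ridge tuned by the training error, $\lambdahat_G=0$ exactly, because least squares minimizes the training error.
\end{itemize}

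The degenerate case is easily closed in two steps.
\begin{itemize}
\item First, $g$ is continuous at every point $(a,a,0)$, so its discontinuity set is only $\{x_3=0,\ x_1\neq x_2\}$.
\item Second, consider the first block row of $\Psi'(\alpha_0)(\alphahat-\alpha_0)=-n^{-1}\sum_i\eta(Z_i,\alpha_0)+\opr(n^{-1/2})$. It reads $\thetahat(\lambdahat_G)-\theta_0=J^{-1}n^{-1}\sum_i\varphi(Z_i,\theta_0,0)+D\lambdahat_G+\opr(n^{-1/2})$. Comparing this with the standard expansion of $\thetahat(0)$ gives $N_1=N_2+DN_3$ almost surely.
\end{itemize}
Hence $\Pr(N_3=0,\ N_1\neq N_2)=0$ in all cases. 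The continuous mapping theorem then delivers the stated limit, which equals $N_2+D\max(N_3,0)$, with no extra assumption. Case (d) is handled the same way, with limit $N_2+D\min(N_3,0)$.
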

\begin{remark}
	From Z-estimation theory, $\sqrt{n}\crl{\thetahat(\lambda) - \theta(\lambda)} = J(\lambda)^{-1}n^{-1/2}\sum_{i=1}^n\varphi\crl{Z_i,\theta(\lambda),\lambda}+\opr(1)$ where $\theta(\lambda)$ is the root of $\E\varphi(Z,\theta,\lambda)$ and $J(\lambda) = -\partial_{\theta(\lambda)}\E\varphi(Z,\theta,\lambda)$ for $\lambda=0,1$. Combining this with \eqref{eq:Influence of thetahat TE} and the central limit theorem shows the joint convergence required for case (c) and (d).
\end{remark}
\begin{proof}
	We will only prove (a) and (c). The remaining cases are similar. Since $\lambdahat_G\prconv\lambda_0<0$ the probability of $\lambdahat_G<0$ tends to 1. Furthermore, conditioned on this event, $\thetahat(\lambdahat_G) = \thetahat(0)$. Hence, $\Pr\crl{\sqrt{n}\crl{\thetahat_T(\lambdahat_G) - \theta_0{\lambda_0}}\leq x}=\underset{n\to\infty}{\lim}\sqb{\Pr\crl{\sqrt{n}\crl{\thetahat(0)-\theta_0}\leq x}\Pr\p{\lambdahat_G< 0}  + o\p{\Pr\p{\lambdahat_G\geq0}}}$. The right hand side of this equation converges to $\Pr\p{N\leq x}$. This proves (a).
	
	For (c) note that $\sqrt{n}\crl{\thetahat_T(\lambdahat_G)-\theta_0(0)}=
	I\crl{\sqrt{n}\p{\lambdahat_G-0}\geq 0}\sqrt{n}\crl{\thetahat(\lambdahat_G)-\theta_0(0)} + I\crl{\sqrt{n}\p{\lambdahat_G-0}< 0}\sqrt{n}\crl{\thetahat(0)-\theta_0(0)}$. Now since the function $g(x,y,z) = I(z\geq 0)x + I(z<0)y$ has the following set of discontinuity points $D_g = \crl{(x,y,0)\,|\,x,y\in\R}$, and this set has measure zero in the multivariate normal distribution, the continuous mapping theorem guarantees that $\sqrt{n}\crl{\thetahat_T(\lambdahat_G)-\theta_0(0)}$ converges in law towards $g(N_1,N_2,N_3)=I(N_3\geq0)N_1 + I(N_3<0)N_2$. This concludes the proof.
\end{proof}

\subsection{Estimators of the variance}\label{section:Estimators}
Now that we know how $\thetahat(\lambdahat)$ behaves asymptotically, we will define estimators of its limiting variance. We start by giving approximations to the quantities in \cref{cor:thetahat TE}.

\begin{theorem}\label{thm:Estimators of quantities involved in variance}
	Let $\lambdahat$ be the minimizer of TE$(\lambda)$ and $\thetahat(\lambdahat)$ the solution to $0  = \Phi_n(\theta,\lambdahat)$, where $\Phi_n(\theta,\lambda) = n^{-1}\sum_{i=1}^n\varphi(Z_i,\theta,\lambda)$. Define the following estimators: $\hat{Z}_1 =n^{-1}\sum_{i=1}^n$ $H_{\lambdahat}\psi\crl{Z_i,\thetahat\p{\lambda}}$, $\hat{Z}_2 =H_{\thetahat(\lambdahat)}\Psi_n(\theta)$ and $\hat{b} =\nabla_{\thetahat(\lambdahat)}\Psi_n(\theta)$, where $\Psi_n(\theta) = n^{-1}\sum_{i=1}^n\psi(Z_i,\theta)$. Furthermore, let $\hat{J} = -\partial_{\thetahat(\lambdahat)}\Phi_n(\theta,\lambdahat)$, $\Dhat = \hat{J}^{-1}\partial_{\lambdahat}\Phi_n\crl{\thetahat(\lambdahat),\lambda}$, $\hat{K}^* =n^{-1}\sum_{i=1}^n\hat\eta\hat\eta^T$, where $\hat\eta = \eta\crl{z,\thetahat(\lambdahat),\lambdahat,\thetahat'(\lambdahat)}$ and
	\begin{equation*}
		\hat{W}^j =\frac{1}{n}\sum_{i=1}^n\crl*{\begin{pmatrix}
				\hat D_j^TH_{\thetahat}\varphi^1(Z_i,\theta,\lambdahat)\\
				\vdots\\
				\hat D_j^TH_{\thetahat}\varphi^p(Z_i,\theta,\lambdahat)\\
			\end{pmatrix} + \partial_{\lambdahat^j}\partial_{\thetahat}\varphi(Z_i,\theta,\lambda)}\text{ for }j=1,\ldots,q,\\
	\end{equation*}
	with $\eta$ as defined in \cref{thm:alpha TE}. These estimators are consistent for the corresponding quantities, if there exists functions depending only on $z$ that bound the norms of all partial derivatives of $\partial_{\theta}\varphi$, $\partial_{\lambda}\varphi$, $H_{\lambda}\psi$, $H_{\theta}\psi$, $\nabla_{\theta}\psi$, $(z,\theta,\lambda)\mapsto\eta(z,\theta,\lambda)\eta(z,\theta,\lambda)^T$, $H_{\theta}\varphi^j$ and $\partial_{\lambda^j}\partial_{\theta}\varphi$ for $j=1,\ldots,p$ with respect to $(\theta, \lambda,\theta')$ in a neighbourhood of $(\theta_0^T,\lambda_0^T,D^T)^T$.
\end{theorem}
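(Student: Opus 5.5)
The plan is to reduce every estimator in the statement to one uniform-convergence argument plus consistency of the plug-in point. By \cref{thm:alpha TE}, $\alphahat = (\thetahat(\lambdahat)^T,\lambdahat^T,\thetahat'(\lambdahat)^T)^T\prconv\alpha_0=(\theta_0^T,\lambda_0^T,D^T)^T$. Each estimator then has the form $n^{-1}\sum_{i=1}^n g(Z_i,\hat\alpha)$, or is a continuous function of such averages. Here $g$ ranges over $H_\theta\psi$, $\nabla_\theta\psi$, $\partial_\theta\varphi$, $\partial_\lambda\varphi$, $\eta\eta^T$, $H_\theta\varphi^j$ and $\partial_{\lambda^j}\partial_\theta\varphi$, each evaluated at the appropriate components of $\hat\alpha$.

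The key lemma is the following. Suppose $g(z,\alpha)$ is continuous in $\alpha$ on a closed ball $B$ around $\alpha_0$, and $\norm{\partial_\alpha g(z,\alpha)}\le m(z)$ there with $\E m(Z)<\infty$. Then $n^{-1}\sum_i g(Z_i,\hat\alpha)\prconv \E g(Z,\alpha_0)$. To prove it, split
\begin{equation*}
\frac1n\sum_{i=1}^n g(Z_i,\hat\alpha)-\E g(Z,\alpha_0) = \frac1n\sum_{i=1}^n\crl{g(Z_i,\hat\alpha)-g(Z_i,\alpha_0)} + \crl*{\frac1n\sum_{i=1}^n g(Z_i,\alpha_0)-\E g(Z,\alpha_0)}.
\end{equation*}
The second term is $\opr(1)$ by the weak law of large numbers. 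On the event $\hat\alpha\in B$, whose probability tends to one, the mean value theorem bounds the first term by $\norm{\hat\alpha-\alpha_0}\, n^{-1}\sum_i m(Z_i) = \opr(1)\Opr(1)$. The hypothesis of the theorem, that the partial derivatives with respect to $(\theta,\lambda,\theta')$ are dominated, is exactly what this step needs. The same argument also yields the uniform version over $B$ (the supplementary Lemma S1.1 can be invoked), which is convenient for the next step.

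Next I apply the lemma term by term, and the most delicate case is $\hat Z_1$. For $\hat b$, $\hat Z_2$ and $\hat J$ it applies directly, and $\E$ can be interchanged with derivatives by dominated convergence, as in the remark after \cref{thm:alpha TE}. This gives $\hat b\prconv b$, $\hat Z_2\prconv Z_2$ and $\hat J\prconv J$. The limit $J$ is nonsingular (it is a block of $\Psi'(\alpha_0)$), so $\hat J^{-1}\prconv J^{-1}$ by continuous mapping. Hence $\Dhat\prconv J^{-1}\partial_{\lambda_0}\E\varphi\{Z,\theta_0,\lambda\} = D$, using the implicit function identity defining $\theta'(\lambda_0)$; the sign convention matches since $J=-\partial_\theta\E\varphi$. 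Consequently $\hat W^j\prconv W^j$ and $\hat K^*\prconv \E\eta\eta^T = \Var\eta(Z,\alpha_0)$, the last equality because $\E\eta(Z,\alpha_0)=\Psi(\alpha_0)=0$.

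The term $\hat Z_1 = n^{-1}\sum_i H_{\lambdahat}\psi\{Z_i,\thetahat(\lambda)\}$ is the step I expect to be the main obstacle, since it involves the random map $\lambda\mapsto\thetahat(\lambda)$ rather than a fixed function of $\alpha$. I would expand by the chain rule:
\begin{equation*}
\hat Z_1 = \thetahat'(\lambdahat)^T\Bigl\{\frac1n\sum_{i=1}^n H\psi\bigr\}\thetahat'(\lambdahat) + \sum_{k=1}^p\Bigl\{\frac1n\sum_{i=1}^n\partial_{\theta^k}\psi\Bigr\}\thetahat^{k\,\prime\prime}(\lambdahat),
\end{equation*}
with all derivatives of $\psi$ evaluated at $(Z_i,\thetahat(\lambdahat))$. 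The second derivative $\thetahat''(\lambdahat)$ is obtained by differentiating the identity $\Phi_n\{\thetahat(\lambda),\lambda\}=0$ twice. This expresses it as a continuous function of $\hat J^{-1}$, $\Dhat$, and the empirical averages of $H_\theta\varphi^j$, $\partial_\lambda\partial_\theta\varphi$ and $H_\lambda\varphi$ at $\hat\alpha$. All of these converge by the lemma, so the same differentiation of $\E\varphi\{Z,\theta(\lambda),\lambda\}=0$ identifies the limit as $\theta''(\lambda_0)$. Hence $\hat Z_1\prconv D^T Z_2 D + \sum_k b_k\theta^{k\,\prime\prime}(\lambda_0) = H_{\lambda_0}\E\psi\{Z,\theta(\lambda)\} = Z_1$.

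If the stated domination hypotheses turn out not to include $H_\lambda\varphi$, I would note that it is required here; in the leading examples (hybrid and penalized estimators) $\varphi$ is linear in $\lambda$, so this term vanishes.
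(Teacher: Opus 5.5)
Your proposal is correct and takes essentially the paper's route. The paper's proof is a one-line appeal to the standard plug-in argument: consistency of $\alphahat$ from \cref{thm:alpha TE}, combined with a dominated-derivative law of large numbers evaluated at a random point, which is exactly your key lemma. Your chain-rule treatment of $\hat Z_1$, recovering $\thetahat''(\lambdahat)$ by differentiating $\Phi_n\crl{\thetahat(\lambda),\lambda}=0$ twice, is more explicit than the paper's, whose hypothesis on ``$H_\lambda\psi$'' glosses over the fact that this quantity depends on $\thetahat''$ and not only on $(\theta,\lambda,\theta')$; your flag that a domination condition on $H_\lambda\varphi$ is additionally needed is therefore well taken.
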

\begin{proof}
	This follows from standard theory, see e.g.~\citet[Le.~4,  Ap.~B]{daehlenaccurate}.
\end{proof}
We are now ready to define estimators of the limiting variance of $\sqrt{n}\crl{\thetahat(\lambdahat)-\theta_0(\lambda_0)}$.

\begin{theorem}\label{thm:Estimators of variance}
	Assume we are minimising TE$\p{\lambda}$ in [0,1] and the conditions of \cref{thm:Estimators of quantities involved in variance} and \cref{cor:thetahat TE} hold true. Then, $\sqrt{n}\crl{\thetahat(\lambdahat)-\theta_0}$ is asymptotically normal with variance $V$ which can be estimated consistently by $\hat V_1 = \hat A^*\hat K^*\p{\hat A^*}^T$ if $\lambdahat$ lies in the interior of $[0,1]$, or $\hat V_2 = \hat J^{-1}\hat K \p{\hat J^{-1}}^T$ when TE is minimised in $\lambda$ with TE'$(\lambda)\neq0$ for $\lambda$ equal to $0$ or $1$. Here $\hat K = n^{-1}\sum_{i=1}^n\varphi\crl{Z_i,\thetahat(\lambdahat),\lambdahat}\varphi\crl{Z_i,\thetahat(\lambdahat),\lambdahat}^T$, $\hat{A}^* = (\hat{A}_1,\hat{A}_2,\hat{A}_3)$ with $\hat{A}_1 = \hat{J}^{-1} - \hat D\hat{Z}_1^{-1}\crl{\hat D^T\hat{Z}_2 + \hat{W}}\hat{J}^{-1}$, $\hat A_2 = -\hat D \hat Z_1\hat D^T$ and $\hat{A}_3 = -\hat D\hat{Z}_1^{-1}\hat{M}$, where $\hat{M}$ is a block diagonal matrix with $\hat{b}^T\hat{J}^{-1}$ on each of its entries and $\hat{W} = (\p{\hat{b}^T\hat{J}^{-1}\hat{W}^1}^T,\ldots,\p{\hat{b}^T\hat{J}^{-1}\hat{W}^q}^T)^T$. 
\end{theorem}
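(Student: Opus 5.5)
The plan splits according to where the population minimiser $\lambda_0$ of the limiting risk $\E\psi\crl{Z,\theta(\lambda)}$ lies relative to $[0,1]$, using \cref{thm:thetahat truncated TE} to reduce the truncated estimator to either the interior case or a fixed-$\lambda$ case. Throughout we use $\thetahat(\lambdahat)=\thetahat_T(\lambdahat_G)$, which holds by convexity of TE as discussed before \cref{thm:thetahat truncated TE}.

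\emph{Interior case, $\lambda_0\in(0,1)$.} Here $\lambdahat_G\prconv\lambda_0$, so with probability tending to one $\lambdahat=\lambdahat_G$ lies in the interior of $[0,1]$ and TE$'(\lambdahat)=0$. On this event \cref{cor:thetahat TE} applies directly, and $\sqrt{n}\crl{\thetahat(\lambdahat)-\theta_0}\distconv N(0,A^*K^*(A^*)^T)$. For consistency of $\hat V_1$ we argue as follows.
\begin{enumerate}
\item By \cref{thm:alpha TE}, $\alphahat$ is consistent for $(\theta_0,\lambda_0,D)$.
\item By \cref{thm:Estimators of quantities involved in variance}, each plug-in quantity $\hat Z_1,\hat Z_2,\hat b,\hat J,\Dhat,\hat W^j,\hat K^*$ is consistent for its population counterpart. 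The argument is a uniform law of large numbers in a shrinking neighbourhood, using the dominating functions, combined with consistency of $\alphahat$.
\item $Z_1$ and $J$ are invertible, so matrix inversion is continuous at them. The maps $(\hat J,\Dhat,\hat Z_1,\hat Z_2,\hat W,\hat b)\mapsto\hat A^*$ and then $\hat A^*\hat K^*(\hat A^*)^T$ are therefore continuous at the true values.
\item The continuous mapping theorem then gives $\hat V_1\prconv A^*K^*(A^*)^T$.
\end{enumerate}
One small point: $\hat A_2$ should involve $\hat Z_1^{-1}$, matching $A_2$. I read the statement with this correction.

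\emph{Boundary case, $\lambda_0<0$ or $\lambda_0>1$.} With probability tending to one $\lambdahat$ equals $0$ or $1$ respectively, and TE$'(\lambdahat)\neq0$, since its limit $\nabla_\lambda\E\psi\crl{Z,\theta(\lambda)}$ at the boundary is nonzero. By \cref{thm:thetahat truncated TE}(a),(b), the limit is that of the fixed-$\lambda$ Z-estimator $\thetahat(0)$ or $\thetahat(1)$. Standard Z-estimation theory (the remark after \cref{thm:thetahat truncated TE}) gives the sandwich variance $J(\lambda)^{-1}K(\lambda)J(\lambda)^{-T}$. The estimators $\hat J$ and $\hat K$, evaluated at the deterministic boundary $\lambdahat$, are consistent by the same uniform LLN argument. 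Hence $\hat V_2$ is consistent. The rule "use $\hat V_1$ if $\lambdahat$ is interior, $\hat V_2$ otherwise" selects the correct estimator with probability tending to one, so the selected estimator is consistent.

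The main obstacle is the knife-edge case $\lambda_0\in\crl{0,1}$. There \cref{thm:thetahat truncated TE}(c),(d) shows that the limit is the mixture $I(N_3\ge0)N_1+I(N_3<0)N_2$, which is not normal, and the selection rule does not stabilise. I would therefore state explicitly that the theorem is read under the assumption $\lambda_0\notin\crl{0,1}$, which is implicit in the normality claim. Under that assumption the proof reduces to the two cases above, together with a short event argument: an estimator that equals a consistent estimator on an event of probability tending to one is itself consistent.
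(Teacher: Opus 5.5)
Your proposal is correct and follows the paper's own proof, which combines \cref{cor:thetahat TE} and \cref{thm:thetahat truncated TE}(a),(b) for the limit law with \cref{thm:Estimators of quantities involved in variance} and the continuous mapping theorem for consistency of $\hat V_1$ and $\hat V_2$. You spell out the case split and the probability-tending-to-one argument for which estimator is selected. Reading $\hat A_2$ with $\hat Z_1^{-1}$ (matching $A_2$ in \cref{cor:thetahat TE}) is the right correction, and excluding $\lambda_0\in\{0,1\}$ matches the paper's remark following the theorem.
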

\begin{proof}
	This follows directly from \cref{cor:thetahat TE} and \cref{thm:thetahat truncated TE}, in conjunction with \cref{thm:Estimators of quantities involved in variance} and the continuous mapping theorem. 
\end{proof}

It could of course happen that $\lambda_0$ is exactly equal to either of the endpoints of $\Lambda$. Then, case (c) or (d) of \cref{thm:thetahat truncated TE} applies and $\sqrt{n}\crl{\thetahat(\lambdahat)-\theta_0}$ is not guaranteed to be asymptotically normal, so that approximating it by N$(0,\hat V_1)$ or N$(0,\hat V_2)$ will not be correct. In \cref{section: When phi = psi} we will, however, see that the distribution in cases (c) and (d) often simplifies to a central multivariate normal distribution also in this case, and that the variance can be estimated by either $\hat V_1$ or $\hat V_2$, defined in the preceding theorem.

\section{Information criteria}\label{section:Information criteria}
The theory presented in the previous sections is not limited to cases where $\lambdahat$ is set to the minimizer of the training error. The results also hold true when the tuning parameter minimises quantities which are ``almost'' equal to TE. The most obvious example of this is perhaps when $\lambdahat$ is set to the minimizer of an information criterion. 

Let $f_\lambda(z,\theta)$ be some parametric model and $\ell_{\lambda,n}\p{\theta}$ the corresponding likelihood. Furthermore, for each $\lambda$, let $\thetahat(\lambda)$ be the maximizer of $\ell_{\lambda,n}\p{\theta}$. Then by definition Akaike's information criterion (AIC), the Bayesian information criterion (BIC) and Takeuchi's information criterion (TIC) take the following forms for each fixed $\lambda$: $\textup{AIC}(\lambda) = -n^{-1}\ell_{\lambda,n}\crl{\thetahat(\lambda)} + n^{-1}p$, $\textup{BIC}(\lambda) = -n^{-1}\ell_{\lambda,n}\crl{\thetahat(\lambda)} + n^{-1}p\log n$ and $\textup{TIC}(\lambda) = -n^{-1}\ell_{\lambda,n}\crl{\thetahat(\lambda)} + n^{-1}\Tr\{\hat{J}(\thetahat,\lambda)^{-1}$ $\hat{K}(\thetahat,\lambda)\}$, where $p$ is the dimension of $\theta$, the number of parameters in the model $f_\lambda(z,\theta)$ at a fixed $\lambda$. Hence, provided $\hat{J}$ and $\hat{K}$ are sufficiently regular, the above information criteria are all only $\opr(1/\sqrt{n})$ away from $n^{-1}\ell_{\lambda,n}\crl{\thetahat(\lambda)}$, allowing \cref{thm:alpha TE} to be applied after replacing $\varphi$ by $\nabla_\theta\log f_\lambda(z,\theta)$ and $\eta_2(z,\alpha)$ by $(z,\alpha) \mapsto\partial_\theta\log f_\lambda(z,\theta)D + \partial_\lambda\log f_\lambda(z,\theta)$. \cref{cor:thetahat TE} and \cref{thm:thetahat truncated TE} also hold after making similar modifications.

The fact that the results of this article can be applied also when $\lambda$ is tuned by minimising the AIC, BIC or TIC follows more or less directly from the forms of the criteria. We will now discuss the focused information criterion (FIC), a newer criterion for which the result is less immediate. The FIC was introduced in \citet{Claeskens2003}, and is an information criterion with a different aim than the classic criteria discussed above. Rather than ranking models according to how well they fit the data overall, the FIC evaluates models by how well they estimate some pre-specified parameter of interest. This parameter of interest is called the focus parameter and should be chosen to reflect the main goal of an analysis. If, for instance, the goal is to estimate the median in a population, the FIC prefers models for which the estimator of the median is good. The quality of an estimator is evaluated by its mean squared error (MSE), and hence, the FIC of a model is the estimated MSE of that model's estimator of the focus parameter. We will work with the relatively newer version of the criterion introduced in \citet{jullum2017parametric} and modified in \citet{daehlenaccurate}.

As before, let $f_\lambda(z,\theta)$ be some parametric model and $\thetahat(\lambda)$ the maximizer of $\theta\mapsto\ell_{\lambda,n}\p{\theta} = \sum_{i=1}^n\log f_\lambda(Z_i,\theta)$ for each $\lambda$. Let $\mu$ be the focus parameter with true value $\mu_0$ and  assume $\mu$ takes the value $\mu_\lambda(\theta)$ in the model $f_\lambda\p{z,\theta}$. The FIC of the model $f_\lambda\p{z,\theta}$ for each $\lambda$ is the MSE of $\mu_\lambda\crl{\thetahat(\lambda)}$. \citet{daehlenaccurate} gives an estimator of this quantity: $\textup{FIC}(\lambda) = \hat{b}_\lambda^2 + n^{-1}\p{2\hat{b}_\lambda\hat{c}_\lambda - \hat{\kappa}_\lambda + \hat{\tau}_\lambda}$, where $\hat{c}_\lambda$, $\hat\kappa_\lambda$ and $\hat\tau_\lambda$ are consistent estimators of certain population quantities defined in the paper, and $\hat{b}_\lambda = \mu\crl{\thetahat(\lambda)} - \hat{\mu}_0$ for some estimator $\hat\mu_0$, which is assumed to be consistent for $\mu_0$. Assume now that $\hat\mu_0$ is the solution to an equation on the form $0 = n^{-1}\sum_{i=1}^n\xi(Z_i,\mu)$. We then have $\textup{FIC}(\lambda) = n^{-1}\sum_{i=1}^n\sqb{\mu\crl{\thetahat(\lambda)} - \hat\mu_0}^2 + \Opr(1/n)$, and hence, provided sufficient regularity, \cref{thm:alpha TE} can be applied by replacing $\theta(\lambda)$ by $\p{\theta(\lambda)^T,\mu}^T$, $\varphi(z,\theta,\lambda)$ by $\p{\varphi(z,\theta,\lambda)^T,\xi(z,\mu_0)}^T$ and $\eta_2(z,\alpha)$ by $(\theta,\mu,\lambda,D)\mapsto2\crl{\mu\p{\theta}-\mu}\p{D^T\nabla_\theta\mu(\theta)^T,\,-1}^T$. In \cref{section:Hybrid likelihood}, we illustrate this in an example.

\begin{remark}
	It is worth noting that we require the criterion used for fitting $\lambda$ to be differentiable as a function of the tuning parameter. Because of this, our theorems do not cover situations where $\lambda$ is discrete. In particular, our theorems cannot be applied when an information criterion is used to e.g.~choose between a finite number of models.
\end{remark}

\section{Cross-validation}\label{section:Crossvalidation}
When $\lambda$ is tuned by minimising the training error, $\thetahat(\lambdahat)$ can be written as the components of a Z-estimator. Because of this the results of the previous section follow more or less directly from standard theory. In practice, however, $\lambda$ is seldom tuned with respect to TE, but instead set to the minimizer of CV$(\lambda)$, the estimate of the risk function obtained by cross-validation. In this section, we will show that the difference between tuning with respect to TE and CV is negligible asymptomatically and that all of the previously derived results can be applied also when $\lambdahat$ is the minimizer of CV. 

First, note that for each $\lambda$ the vector $(\thetahat(\lambda)^T,\thetahat'(\lambda)^T)^T$ is the solution to equation $n^{-1}\sum_{i=1}^n\xi(Z_i,\theta,\lambda,D) = 0$ with $\xi(z,\theta,\lambda,D) = (\xi_1(z,\theta,\lambda)^T,\xi_2(z,\theta,\lambda,D)^T)^T$ where $\xi_1 = \varphi$ and $\xi_2(z,\theta,\lambda,D)  = \partial_{\theta}\varphi(z,\theta,\lambda)D + \partial_{\lambda}\varphi(z,\theta,\lambda)$.  This follows from definition of $\thetahat(\lambda)$ and the implicit function theorem. Because of this, the vector $(\thetahat(\lambda)^T,\thetahat'(\lambda)^T)^T$ is a Z-estimator, and under weak conditions $(\thetahat(\lambda)^T,\thetahat'(\lambda)^T)^T$ is consistent for some vector $\p{\theta_0(\lambda)^T,D_0(\lambda)^T}^T$ and $(\thetahat(\lambda)^T, \thetahat'(\lambda)^T)^T\approx(\theta_0(\lambda)^T, D_0(\lambda)^T)^T + V(\lambda)^{-1}n^{-1}\sum_{i=1}^n\xi\crl{Z_i,\theta_0(\lambda), D_0(\lambda),\lambda}$ where $V(\lambda) = -\partial_{\theta_0(\lambda),D_0(\lambda)}\E\xi(Z,\theta,\theta',\lambda)$. See e.g.~chapter 5 of \citet{van2000asymptotic} for proofs and sufficient conditions. Because of this $(\thetahat(\lambda)^T-\thetahat_{(-i)}(\lambda)^T, \thetahat'(\lambda)^T - \thetahat'_{(-i)}(\lambda)^T) \approx n^{-1}V(\lambda)^{-1}\xi\crl{Z_i,\theta_0(\lambda),\theta_0'(\lambda)} = \Opr(1/n)$. We formalize this idea in the following lemma.

\begin{lemma}\label{lemma:Distance thetahati and thetahat}
	Let the parameter sets $\Theta$ for $\theta$, $\mathcal{D}$ for $\thetahat'(\lambda)$ and $\Lambda$ for $\lambda$ be compact and $\xi(z,\theta,D,\lambda) = (\varphi(z,\theta,\lambda)^T,\xi_2(z,\theta,\lambda,D)^T)$ where $\xi_2(z,\theta,\lambda,D) = \partial_{\theta}\varphi(z,\theta,\lambda)D + \partial_{\lambda}\varphi(z,\theta,\lambda)$. Assume the conditions of Lemma S1.3 in the supplementary material hold true for this $\xi$, and in particular, that $m_0(z)$ bounds $\xi(z,\theta,D,\lambda)$ for all $\theta$, $D$ and $\lambda$. Then $\norm{\thetahat_{(-i)}(\lambda) - \thetahat(\lambda)}, \norm{\thetahat'_{(-i)}(\lambda) - \thetahat(\lambda)}\leq a_{0,n}m_0(Z_i)$ where $a_{0,n} = \Opr(1/n)$ uniformly in $\lambda$ and does not depend on $i$.
\end{lemma}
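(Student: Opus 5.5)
We write $\beta = (\theta^T, D^T)^T$ and $\betahat(\lambda) = (\thetahat(\lambda)^T,\thetahat'(\lambda)^T)^T$, with $\betahat\noti(\lambda)$ its leave-one-out analogue. Both solve estimating equations in $\xi$: the first with all $n$ observations, the second with $Z_i$ removed. Put
\[
\Xi_n(\beta,\lambda)=n^{-1}\sum_{j=1}^n\xi(Z_j,\beta,\lambda),\qquad \Xi_{n,(-i)}(\beta,\lambda) = (n-1)^{-1}\sum_{j\neq i}\xi(Z_j,\beta,\lambda).
\]
Then $0=\Xi_n\{\betahat(\lambda),\lambda\}$ and $0=\Xi_{n,(-i)}\{\betahat\noti(\lambda),\lambda\}$.

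The first step is to compare the two criterion functions at the full-sample solution. A direct computation gives
\[
\Xi_{n,(-i)}(\beta,\lambda)=\frac{n}{n-1}\Xi_n(\beta,\lambda)-\frac{1}{n-1}\xi(Z_i,\beta,\lambda).
\]
Evaluating at $\beta=\betahat(\lambda)$, where $\Xi_n$ vanishes, yields
\[
\norm{\Xi_{n,(-i)}\{\betahat(\lambda),\lambda\}}\le m_0(Z_i)/(n-1).
\]
This bound holds for every $\lambda$ and every $i$, because $m_0$ dominates $\xi$ on the whole compact set $\Theta\times\mathcal D\times\Lambda$.

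The second step converts this small residual into a small distance between roots, by a mean value expansion of $\Xi_{n,(-i)}$ in $\beta$. We have
\[
-\Xi_{n,(-i)}\{\betahat(\lambda),\lambda\}
=\Xi_{n,(-i)}\{\betahat\noti(\lambda),\lambda\}-\Xi_{n,(-i)}\{\betahat(\lambda),\lambda\}
=\bar V_{n,i}(\lambda)\{\betahat\noti(\lambda)-\betahat(\lambda)\},
\]
where $\bar V_{n,i}(\lambda)$ is the derivative matrix averaged along the segment joining the two roots. It then suffices to show that
\[
c_n=\sup_{\lambda}\max_i\norm{\bar V_{n,i}(\lambda)^{-1}}=\Opr(1),
\]
since this gives $\norm{\betahat\noti-\betahat}\le c_n m_0(Z_i)/(n-1)$. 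Setting $a_{0,n}=c_n/(n-1)$ then proves the claim for both components, with $a_{0,n}$ free of $i$.

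The third step, which I expect to be the main obstacle, is the uniform invertibility $c_n = \Opr(1)$. I would use the conditions of Lemma S1.3, which presumably give a uniform law of large numbers for $\partial_\beta\xi$ (dominated by $m_1$) and uniform consistency $\sup_\lambda\norm{\betahat(\lambda)-\beta_0(\lambda)}\prconv0$. The same conditions apply to every leave-one-out sample, and the maximum over $i$ is handled by the fact that
\[
\sup_{\beta,\lambda}\norm{\Xi_{n,(-i)}-\Xi_n}\le 2n^{-1}\max_j m_0(Z_j)=\opr(1),
\]
which holds because $\E m_0^2<\infty$. This shows $\max_i\sup_\lambda\norm{\betahat\noti(\lambda)-\beta_0(\lambda)}\prconv0$, using the identifiability part of Lemma S1.3.

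Consequently, the segment between $\betahat(\lambda)$ and $\betahat\noti(\lambda)$ lies in a shrinking neighbourhood of $\beta_0(\lambda)$, uniformly in $i$ and $\lambda$. On such a set, $\partial_\beta\Xi_{n,(-i)}$ is uniformly close to $-V(\lambda)$, again because $n^{-1}\max_j m_1(Z_j)=\opr(1)$ controls the removal of one term. Finally, $V(\lambda)$ is continuous and nonsingular on the compact set $\Lambda$, so $\inf_\lambda \sigma_{\min}\{V(\lambda)\}>0$. Continuity of matrix inversion therefore gives $c_n\le 2\sup_\lambda\norm{V(\lambda)^{-1}}$ with probability tending to one, which completes the argument.
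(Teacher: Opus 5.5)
Your argument is correct. Its skeleton is the same as the paper's: a residual of order $m_0(Z_i)/n$ from the difference between the full and leave-one-out estimating equations, divided by a Jacobian whose inverse is bounded in probability. Where you genuinely differ is in how that bounded inverse is obtained.

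The paper (first part of Lemma S1.3, applied with $\phi=\xi$) expands the \emph{full-sample} equation at the leave-one-out root. This gives $\thetahat\noti(\lambda)-\thetahat(\lambda)=-n^{-1}J_n\crl{\theta_i^*(\lambda),\lambda}^{-1}\phi\crl{Z_i,\thetahat\noti(\lambda),\lambda}$, with $J_n$ the full-sample Jacobian. It then uses the hypothesis that the eigenvalues of $J(\theta,\lambda)^{-1}$ are bounded on \emph{all} of $\Theta\times\Lambda$. Combined with uniform convergence of $J_n$ to $J$ on this compact set, this gives $\norm{J_n(\theta,\lambda)^{-1}}\leq C_1+\opr(1)$ everywhere. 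So the location of the intermediate point is irrelevant, and no consistency of $\thetahat$ or $\thetahat\noti$ is used; consistency only enters the sharper second part of that lemma. The step you identified as the main obstacle is therefore removed in the paper by a global assumption.

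Your localisation needs only nonsingularity of $V(\lambda)$ at $\beta_0(\lambda)$, which is weaker. The price is a uniform-in-$i$ consistency statement for the leave-one-out roots. That statement does hold, via your bound $\max_i\sup\norm{\Xi_{n,(-i)}-\Xi_n}\leq 2\max_j m_0(Z_j)/(n-1)$ and the Brouwer argument of Lemma S1.1. Note, however, that what you need there is uniqueness of the roots in $\Theta\times\mathcal{D}$, which is implicit in ``the solutions'' in Lemma S1.3. Lemma S1.3 contains no separate identifiability condition.

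Your version has two small advantages. First, the integral form of the mean value theorem is the correct one for vector-valued maps, whereas the paper's single intermediate point $\theta_i^*$ is informal. Second, treating the stacked vector $(\theta^T,D^T)^T$ makes explicit that the second bound in the statement should read $\norm{\thetahat'\noti(\lambda)-\thetahat'(\lambda)}$.
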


The proof is given in the supplementary material. With \cref{lemma:Distance thetahati and thetahat}, we are ready to quantify the difference between CV'$(\lambda)$ and TE'$(\lambda)$.

\begin{lemma}\label{lemma:Crossvalidation uniformly close training error}
	Assume all conditions of \cref{lemma:Distance thetahati and thetahat} hold true with $\E m_0(Z)^4<\infty$. Then
	\begin{equation}\label{eq:CV and training error distance}
		\frac{1}{n}\sum_{i=1}^n\thetahat_{(-i)}'(\lambda)^T\nabla_{\thetahat_{(-i)}(\lambda)}\psi\p{Z_i,\theta} = \frac{1}{n}\sum_{i=1}^n\thetahat'(\lambda)^T\nabla_{\thetahat(\lambda)}\psi\p{Z_i,\theta} + \delta_n(\lambda),
	\end{equation}
	where $\norm{\delta_n(\lambda)} = \Opr(1/n)$ uniformly in $\lambda$, provided there exist square integrable functions $p_1$ and $p_2$ not depending on $\theta$ that bound respectively all first and second order partial derivatives of $\psi$ with respect to $\theta$.
\end{lemma}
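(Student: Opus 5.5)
The plan is to compare the two sums in \eqref{eq:CV and training error distance} term by term, using \cref{lemma:Distance thetahati and thetahat} to control how far the leave-one-out quantities are from the full-sample ones. Write $\thetahat_i=\thetahat_{(-i)}(\lambda)$, $\thetahat=\thetahat(\lambda)$, $D_i=\thetahat'_{(-i)}(\lambda)$ and $D=\thetahat'(\lambda)$. Each summand difference then decomposes as
\begin{equation*}
D_i^T\nabla\psi(Z_i,\thetahat_i)-D^T\nabla\psi(Z_i,\thetahat) = (D_i-D)^T\nabla\psi(Z_i,\thetahat_i) + D^T\crl{\nabla\psi(Z_i,\thetahat_i)-\nabla\psi(Z_i,\thetahat)}.
\end{equation*}
The second part of \cref{lemma:Distance thetahati and thetahat}, read as a statement about $\thetahat'_{(-i)}-\thetahat'$, bounds $\norm{D_i-D}$. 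The first part bounds $\norm{\thetahat_i-\thetahat}$. Both bounds are $a_{0,n}m_0(Z_i)$ with $a_{0,n}=\Opr(1/n)$, uniformly in $\lambda$ and independent of $i$.

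\textbf{First term.} Since $\norm{\nabla_\theta\psi(z,\theta)}\le p_1(z)$ for every $\theta$, Cauchy--Schwarz gives
\begin{equation*}
\norm*{\frac1n\sum_i (D_i-D)^T\nabla\psi(Z_i,\thetahat_i)}\le a_{0,n}\,\frac1n\sum_i m_0(Z_i)p_1(Z_i).
\end{equation*}
The average on the right is $\Opr(1)$ by the law of large numbers, because $\E m_0p_1\le(\E m_0^2\E p_1^2)^{1/2}<\infty$. The bound does not involve $\lambda$, so this term is $\Opr(1/n)$ uniformly in $\lambda$.

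\textbf{Second term.} By the mean value theorem applied componentwise, the second-derivative bound $p_2$ gives $\norm{\nabla\psi(Z_i,\thetahat_i)-\nabla\psi(Z_i,\thetahat)}\le c\,p_2(Z_i)\,a_{0,n}m_0(Z_i)$, where $c$ depends only on $p$. We also need $D$ to be bounded uniformly in $\lambda$. This holds because $D$ lies in the compact set $\mathcal{D}$, so $\norm{D}\le C$. The second term is therefore at most $C c\, a_{0,n}\, n^{-1}\sum_i p_2(Z_i)m_0(Z_i)=\Opr(1/n)$, again by Cauchy--Schwarz and the law of large numbers.

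The assumption $\E m_0^4<\infty$ gives extra room. If $\mathcal{D}$ were not assumed compact, one could instead bound $\norm{D}$ and $\norm{D_i}$ through the Z-estimator representation. Products such as $m_0^2p_j$ would then appear, and these are controlled by H\"older's inequality using the fourth moment.

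\textbf{Main obstacle.} The delicate point is uniformity in $\lambda$. It rests entirely on \cref{lemma:Distance thetahati and thetahat} delivering a single random $a_{0,n}$ that works for all $\lambda$ and all $i$ simultaneously. Every other bound above is $\lambda$-free, because $p_1$, $p_2$ and $m_0$ depend on $z$ only and $\mathcal{D}$ is compact. Combining the two terms gives $\sup_\lambda\norm{\delta_n(\lambda)}=\Opr(1/n)$.
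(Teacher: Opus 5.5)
Your proof is correct and takes essentially the same route as the paper. Both arguments write $\thetahat_{(-i)}(\lambda)$ and $\thetahat'_{(-i)}(\lambda)$ as perturbations of size $a_{0,n}m_0(Z_i)$ via \cref{lemma:Distance thetahati and thetahat}, control the gradient itself through $p_1$ and its change through $p_2$, bound $\norm{\thetahat'(\lambda)}$ by compactness of $\mathcal{D}$, and finish with the law of large numbers. The one difference is minor: you evaluate $(D_i-D)^T\nabla\psi$ at $\thetahat_i$ instead of expanding the full product. This removes the paper's cross term $a_{0,n}^2n^{-1}\sum_i p_2(Z_i)m_0(Z_i)^2$, so you need only second moments of $m_0$ rather than $\E m_0(Z)^4<\infty$.
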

\begin{proof}
	By \cref{lemma:Distance thetahati and thetahat}, $\thetahat_{(-i)}(\lambda) = \thetahat(\lambda) +  R_n^i(\lambda)$ and $\thetahat_{(-i)}'(\lambda) = \thetahat(\lambda) + S_n^i(\lambda)$ with $\norm{R_n^i(\lambda)},\, \norm{S_n^i(\lambda)} \leq a_{0,n}m_0(Z_i)$. A Taylor expansion reveals that the left hand side of \eqref{eq:CV and training error distance} is equal to $n^{-1}\sum_{i=1}^n\crl{\thetahat'(\lambda) + S^i_n(\lambda)}^T\crl{\nabla_{\thetahat(\lambda)}\psi\p{Z_i,\theta} + H_{\theta_i^*}\psi\p{Z_i,\theta}R_n^i(\lambda)}$, for some $\theta_i^*$s on the line segments between $\thetahat(\lambda)$ and $\thetahat_{(-i)}(\lambda)$.  Utilizing the existence of $p_1$ and $p_2$, we get that $\norm{CV'(\lambda)-TE'(\lambda)}$ is bounded by $a_{0,n}\thetahat'(\lambda)^Tn^{-1}\sum_{i=1}^np_2(Z_i)m_0(Z_i) + a_{n,0}n^{-1}\sum_{i=1}^np_1(Z_i)m_0(Z_i) + a_{0,n}^2n^{-1}\sum_{i=1}^np_2(Z_i)m_0(Z_i)^2$. Since $\E p_1(Z)^2$, $\E p_2(Z)^2$ and $\E m_0(Z)^4$ exist, the means converge. This ensures $\norm{CV'(\lambda)-TE'(\lambda)} = \Opr(1/n)$ uniformly in $\lambda$ as $\mathcal{D}$ is compact.
\end{proof}

Since CV'$(\lambda)$ and TE'$(\lambda)$ are uniformly close as functions of $\lambda$ by the above lemma, we would expect $\thetahat(\lambdahat)$ to asymptotically behave similarly when tuning with respect to CV and TE. The following theorem confirms the intuition.

\begin{theorem}\label{thm:thetahat CV asymptotically equivalent thetahat TE}
	Let $\lambda_{\textup{TE}}$ be the minimizer of TE and $\lambda_{\textup{CV}}$ the minimizer of CV and assume that the conditions of \cref{thm:thetahat truncated TE} and \cref{lemma:Crossvalidation uniformly close training error} hold true. Then $\sqrt{n}\crl{\thetahat(\lambda_{\textup{TE}}) - \theta_0}$ and $\sqrt{n}\crl{\thetahat(\lambda_{\textup{CV}}) - \theta_0}$ are asymptotically equivalent. In particular, \cref{thm:alpha TE}, \cref{cor:thetahat TE} and \cref{thm:thetahat truncated TE} hold true for $\thetahat(\lambdahat_{\textup{CV}})$, and the limiting distribution of $\sqrt{n}\crl{\thetahat(\lambda_{\textup{CV}}) - \theta_0}$ can be estimated as in \cref{thm:Estimators of variance}.
\end{theorem}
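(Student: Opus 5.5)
The plan is to show that $\lambda_{\textup{CV}}-\lambda_{\textup{TE}}=\opr(1/\sqrt{n})$. Once this holds, a mean-value expansion of $\lambda\mapsto\thetahat(\lambda)$ gives
\[
\thetahat(\lambda_{\textup{CV}})-\thetahat(\lambda_{\textup{TE}})=\thetahat'(\tilde\lambda)(\lambda_{\textup{CV}}-\lambda_{\textup{TE}}),
\]
with $\tilde\lambda$ between the two tuning parameters. Here $\thetahat'$ is bounded in probability: it lies in the compact set $\mathcal{D}$, or, alternatively, it is consistent for $D_0(\lambda)$ uniformly near $\lambda_0$ by the Z-estimator argument preceding \cref{lemma:Distance thetahati and thetahat}. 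So the difference is $\opr(1/\sqrt{n})$, and the two scaled estimators are asymptotically equivalent. The remaining claims then follow by Slutsky's lemma. The expansion \eqref{eq:Influence of alpha TE} holds for $\alphahat_{\textup{CV}}$ because its $\thetahat$ and $\lambdahat$ components differ from the TE versions by $\opr(1/\sqrt{n})$. The $\thetahat'$ component is handled by the same mean-value argument applied to $\xi_2$.

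For the interior case, I will use the paper's own relaxation in \cref{thm:alpha TE}: $\lambdahat$ only needs to solve $\textup{TE}'(\lambda)=\opr(1/\sqrt{n})$. Since $\lambda_{\textup{CV}}$ solves $\textup{CV}'(\lambda)=0$, \cref{lemma:Crossvalidation uniformly close training error} gives
\[
\textup{TE}'(\lambda_{\textup{CV}})=-\delta_n(\lambda_{\textup{CV}})=\Opr(1/n)=\opr(1/\sqrt{n}).
\]
The bound on $\delta_n$ is uniform in $\lambda$, so it still applies at the random point $\lambda_{\textup{CV}}$. It remains to check that $\lambda_{\textup{CV}}$ is consistent for $\lambda_0$. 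For this I will use the uniform convergence $\textup{CV}'-\textup{TE}'\to0$ together with the uniform convergence of $\textup{TE}'$ to its limit $\nabla_\lambda\E\psi\{Z,\theta_0(\lambda)\}$ from Lemma S1.3, and the uniqueness of $\lambda_0$ as the zero of that limit. This is a standard argmin/zero-consistency argument. After that, \cref{thm:alpha TE} applies verbatim to $\alphahat_{\textup{CV}}$, and the expansion of $\lambdahat$ implies $\sqrt{n}(\lambda_{\textup{CV}}-\lambda_{\textup{TE}})\prconv0$.

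The main obstacle is the boundary cases of \cref{thm:thetahat truncated TE}, where I must work with global minimizers and truncation. For cases (a) and (b) with $\lambda_0\notin[0,1]$, consistency of both $\lambdahat_G$ (TE and CV versions) forces both truncated estimators to equal $\thetahat(0)$ (or $\thetahat(1)$) with probability tending to one, so they coincide. For cases (c) and (d), the joint convergence of $\sqrt{n}\{\thetahat(\lambdahat_G)-\theta_0,\thetahat(0)-\theta_0,\lambdahat_G\}$ transfers from TE to CV by the $\opr(1/\sqrt{n})$ closeness above, because the interior expansion argument does not use the restriction to $[0,1]$. The continuous-mapping step in the proof of (c) then goes through unchanged, since the discontinuity set of $g$ has limit probability zero. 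Finally, the variance estimators of \cref{thm:Estimators of variance} are evaluated at $\lambda_{\textup{CV}}$. They stay consistent because each is a continuous function of sample averages that converge uniformly near $(\theta_0,\lambda_0,D)$, and $\lambda_{\textup{CV}}$ is consistent.
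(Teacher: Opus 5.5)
Your proposal is correct and follows essentially the same route as the paper. The uniform bound in \cref{lemma:Crossvalidation uniformly close training error} gives $\textup{TE}'(\lambda_{\textup{CV}})=\Opr(1/n)=\opr(1/\sqrt{n})$, so $\alphahat_{\textup{CV}}$ is an approximate zero of the $\eta$-equations, and \cref{thm:alpha TE} (via van der Vaart's Theorem 5.21) then gives it the same linear expansion as $\alphahat$.

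Beyond the paper, you explicitly prove that $\lambda_{\textup{CV}}$ is consistent, which Theorem 5.21 needs as input and the paper leaves implicit, and you spell out the truncated cases. Your mean-value step is redundant, since the common expansion of $\alphahat$ and $\alphahat_{\textup{CV}}$ already makes their $\theta$-components equivalent.
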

\begin{proof}
	Let $\alphahat_{\textup{CV}} = (\thetahat(\lambdahat_{\textup{CV}})^T, \lambdahat_{\textup{CV}}^T, \thetahat'(\lambdahat_{\textup{CV}})^T)^T$. Since TE'$(\lambda) =$ CV'$(\lambda)+\Opr(1/n)$ uniformly in $\lambda$, $\textup{TE}'(\lambdahat_{\textup{CV}}) = \Opr(1/n)$.  Hence, $n^{-1}\sum_{i=1}^n\eta(Z_i,\alphahat_{\textup{CV}}) = \Opr(1/n)$. Since $n^{-1}\sum_{i=1}^n\eta(Z_i,\alphahat_{\textup{CV}}) = \opr(1/\sqrt{n})$ is all that is needed for \citet[Th. 5.21]{van2000asymptotic}, the conclusion of \cref{thm:alpha TE} follows for $\alphahat_{\textup{CV}}$ as well. This ensures that $\sqrt{n}\p{\alphahat -\alpha_0}$ and $\sqrt{n}\p{\alphahat_{\textup{CV}} - \alpha_0}$ are asymptotically equivalent. \cref{thm:thetahat CV asymptotically equivalent thetahat TE} follows from this fact.
\end{proof}

\subsection{A more precise approximation}\label{section:Proving Stone}
\cref{thm:thetahat CV asymptotically equivalent thetahat TE} is a somewhat surprising result, as it tells us that asymptotically speaking it makes no difference if we tune with respect to TE or CV. Yet, in most applications, minimising TE and CV gives very different results. We will now take a closer look at what the difference between CV and TE really is. To make the derivations more readable, we will only show pointwise results for $\thetahat(\lambda)$, and drop $\lambda$ from the notation.

Arguing similarly as in the paragraph preceding \cref{lemma:Distance thetahati and thetahat} we can show $\thetahat_{(-i)} -\thetahat \approx -n^{-1}J^{-1}\varphi(Z_i,\theta_0)$ where $\theta_0$ is the solution to $\E\varphi(Z,\theta) = 0$ and  $J = -\partial_{\theta_0}\E\varphi(Z,\theta)$. This approximation is not particularly surprising. The function $z\mapsto J^{-1}\varphi\crl{z,\theta_0}$ is called the influence function of $\thetahat$ as it in some sense tells how much a single data point affects estimation of $\theta$. See e.g.~\citet{huber2004robust} for more details on influence functions. Since $\thetahat_{(-i)}$ is the estimator you get instead of $\thetahat$ if you remove the $i$-th data point, it is natural that the difference between $\thetahat_{(-i)}$ and $\thetahat$, is precisely the ``influence'' of $Z_i$. 

Assuming $\psi$ is sufficiently smooth in $\theta$, a Taylor expansion shows $ n^{-1}\sum_{i=1}^n\psi\p{Z_i,\thetahat} + n^{-1}\sum_{i=1}^n(\thetahat\noti - \thetahat)^T\nabla_{\thetahat}\psi\p{Z_i,\theta}\approx \CV$. A further Taylor expansion of $\nabla_{\thetahat}\psi\p{Z_i,\theta}$ around $\theta_0$, reveals $\CV \approx n^{-1}\sum_{i=1}^n\psi\p{Z_i,\thetahat} - n^{-1}\sum_{i=1}^n(\thetahat\noti-\thetahat)\nabla\psi\p{Z_i,\theta_0}\approx n^{-1}\sum_{i=1}^n\psi\p{Z_i,\thetahat} - n^{-2}\sum_{i=1}^n\varphi(Z_i,\theta_0)^T\p{J^{-1}}^T\nabla_{\theta_0}\psi\p{Z_i,\theta}$. By the law of large numbers and properties of the trace operator, we hence get $\CV\approx \TE - n^{-1}\Tr\p{J^{-1}C}$ where $C = \E\varphi(Z_i,\theta_0)\nabla_{\theta_0}\psi(Z,\theta_0)^T$. The following theorem confirms these informal arguments. A full proof is given in the supplementary material.

\begin{theorem}\label{thm:Extension of Stone}
	Let $\thetahat$ and $\thetahat\noti$ the solutions to $\sum_{j=1}^n\varphi(Z_j,\theta) = 0$ and $\sum_{j\neq i}^n\varphi(Z_j,\theta)$ respectively. Assume further that $\thetahat,\,\thetahat\noti\in\Theta\subseteq\R^p$ where $\Theta$ is compact and that there exists functions $m_0,\, m_1,\,p_1$ and $p_2$ bounding the norms of $\varphi$, all partial derivatives of $\varphi$ with respect to $\theta$, $\psi$ and all partial derivatives of $\psi$ with respect to $\theta$, respectively. Then, for a given function $\psi(z,\theta)$, we have
	\begin{equation}\label{eq:Formel Stone thm}
		n^{-1}\sum_{i=1}^n\psi\p{Z_i,\thetahat\noti} = n^{-1}\sum_{i=1}^n\psi\p{Z_i,\thetahat} - n^{-1}\Tr\p{J^{-1}C} + \opr(1/n)
	\end{equation}
	where $J = -\partial_{\theta_0}\E\varphi(Z,\theta)$ and $C = \E\varphi(Z,\theta_0)\nabla_{\theta_0}\psi(Z,\theta)^T$, with $\theta_0$ being the solution to $\E\varphi(Z,\theta)=0$, provided the eigenvalues of $\crl{\partial_{\theta}\E\varphi(Z,\theta)}^{-1}$ are bounded in $\Theta$ and $\E m_0(Z)^4,\,\E m_1(Z)^4,\,\E p_1(Z)^4,\,\E p_2(Z)^2< \infty$.
\end{theorem}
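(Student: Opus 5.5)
The plan is to make the heuristic before the statement rigorous. I will control the leave-one-out perturbations $\Delta_i=\thetahat\noti-\thetahat$ uniformly in $i$, expand $\psi$ to second order around $\thetahat$, and then replace all random quantities by their population counterparts at $\theta_0$, so that only the trace term survives at order $1/n$.

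\textbf{Step 1: exact representation of $\Delta_i$.} Subtracting the two estimating equations gives $\sum_{j\neq i}\varphi(Z_j,\thetahat)=-\varphi(Z_i,\thetahat)$. Combined with $\sum_{j\neq i}\varphi(Z_j,\thetahat\noti)=0$, this yields
\begin{equation*}
\varphi(Z_i,\thetahat)=\sum_{j\neq i}\crl{\varphi(Z_j,\thetahat\noti)-\varphi(Z_j,\thetahat)} .
\end{equation*}
I would use the integral form of the mean value theorem rather than the pointwise one, since $\varphi$ is vector valued. This gives $\hat J_i\Delta_i=-n^{-1}\varphi(Z_i,\thetahat)$, where
\begin{equation*}
\hat J_i=-n^{-1}\sum_{j\neq i}\int_0^1\partial_\theta\varphi\crl{Z_j,\thetahat+t\Delta_i}\,dt .
\end{equation*}
Hence $\Delta_i=-n^{-1}\hat J_i^{-1}\varphi(Z_i,\thetahat)$, which has the sign required by the statement.

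\textbf{Step 2: controlling $\hat J_i$ and $\Delta_i$ uniformly in $i$.} By compactness of $\Theta$ and the envelope $m_1$, a uniform law of large numbers gives $\sup_\theta\norm{n^{-1}\sum_j\partial_\theta\varphi(Z_j,\theta)-\partial_\theta\E\varphi(Z,\theta)}\prconv0$. Dropping the single term $j=i$ changes this by at most $n^{-1}\max_i m_1(Z_i)$. Because $\E m_1(Z)^4<\infty$, we have $\max_i m_1(Z_i)=\opr(n^{1/4})$, so this change is $\opr(1)$ uniformly in $i$. The bounded eigenvalues of $\crl{\partial_\theta\E\varphi}^{-1}$ on $\Theta$ then give $\max_i\norm{\hat J_i^{-1}}=\Opr(1)$. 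It follows that $\norm{\Delta_i}\le a_n m_0(Z_i)$ with $a_n=\Opr(1/n)$ independent of $i$, which is the content of \cref{lemma:Distance thetahati and thetahat}. In particular, $\max_i\norm{\Delta_i}=\opr(n^{-3/4})\to0$. Together with consistency of $\thetahat$ (standard Z-estimator theory), this gives $\max_i\norm{\hat J_i-J}\prconv0$.

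\textbf{Step 3: second-order expansion of $\psi$.} Expanding $\psi(Z_i,\cdot)$ around $\thetahat$ gives
\begin{equation*}
\CV-\TE=n^{-1}\sum_i\Delta_i^T\nabla_{\thetahat}\psi(Z_i,\theta)+R_n .
\end{equation*}
The remainder satisfies $|R_n|\le n^{-1}\sum_i p_2(Z_i)\norm{\Delta_i}^2\le a_n^2\, n^{-1}\sum_i p_2(Z_i)m_0(Z_i)^2$. This is $\Opr(n^{-2})$ by Cauchy--Schwarz, using $\E p_2^2<\infty$ and $\E m_0^4<\infty$, so it is $\opr(1/n)$. Inserting Step 1, the main term becomes
\begin{equation*}
-n^{-2}\sum_i\varphi(Z_i,\thetahat)^T(\hat J_i^{-1})^T\nabla_{\thetahat}\psi(Z_i,\theta).
\end{equation*}

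\textbf{Step 4: reduction to population quantities.} The remaining task is to show that $n^{-1}\sum_i\varphi(Z_i,\thetahat)^T(\hat J_i^{-1})^T\nabla_{\thetahat}\psi(Z_i,\theta)\prconv\E\crl{\nabla_{\theta_0}\psi^TJ^{-1}\varphi(Z,\theta_0)}=\Tr(J^{-1}C)$.
\begin{itemize}
\item[(i)] I would first replace $\hat J_i^{-1}$ by $J^{-1}$. The error is bounded by $\max_i\norm{\hat J_i^{-1}-J^{-1}}\,n^{-1}\sum_i m_0(Z_i)p_1(Z_i)$, which is $\opr(1)$.
\item[(ii)] Next I would replace $\thetahat$ by $\theta_0$ inside $\varphi$ and $\nabla\psi$ via the mean value theorem. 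The error is bounded by $\norm{\thetahat-\theta_0}\,n^{-1}\sum_i\crl{m_1p_1+m_0p_2}(Z_i)$, which is $\opr(1)$ by the moment assumptions.
\item[(iii)] Finally, the law of large numbers applies to $\nabla\psi^TJ^{-1}\varphi$ at $\theta_0$, which is integrable because $m_0p_1$ is.
\end{itemize}
Multiplying by $n^{-1}$ gives \eqref{eq:Formel Stone thm}.

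The main obstacle is Step 2, namely the uniformity in $i$. The matrices $\hat J_i$ and the intermediate points differ with $i$, so a single law of large numbers does not suffice. I would handle this by the ULLN over compact $\Theta$ together with the fourth-moment max bound $\max_i m(Z_i)=\opr(n^{1/4})$. One more point needs checking: the uniform law must be applied to the integrated Jacobian evaluated along the segments $\thetahat+t\Delta_i$, and these segments lie in $\Theta$ for large $n$.
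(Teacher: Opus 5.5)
Your proposal is correct in outline and takes a genuinely different route from the paper's.

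\textbf{The paper's route.} It uses a first-order mean value expansion of $\psi$ at intermediate points $\theta_i^*$. It writes $\thetahat\noti-\thetahat=-n^{-1}J_n(\theta_i^*)^{-1}\varphi(Z_i,\thetahat\noti)$ with the \emph{full-sample} Jacobian $J_n$. It then proves a per-observation expansion $\thetahat\noti-\thetahat=-n^{-1}J^{-1}\varphi(Z_i,\theta_0)+E_n^i$ with an explicit bound on $E_n^i$ (the second part of Lemma S1.3), obtained by Taylor expanding $J(\theta)^{-1}$ around $\theta_0$. Finally it bounds $\sum_i\nabla\psi^TE_n^i$ and expands $\nabla_{\theta_i^*}\psi$ around $\theta_0$.

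\textbf{Your route.} You use the mirror identity, with the leave-one-out Jacobian and $\varphi(Z_i,\thetahat)$ evaluated at the common point $\thetahat$. You expand $\psi$ to second order around $\thetahat$ and keep $\hat J_i^{-1}$ inside the average. So you only need $\max_i\norm{\hat J_i^{-1}-J^{-1}}\prconv0$ and $\thetahat\prconv\theta_0$, never a rate for a per-$i$ remainder.

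\textbf{What each buys.} Your route needs weaker hypotheses.
\begin{itemize}
\item Expanding $J(\theta)^{-1}$ requires bounded $\partial_\theta J$, i.e.\ second derivatives of $\E\varphi$. This is not among the theorem's stated assumptions, yet the paper needs it; you only need the first-derivative envelope $m_1$.
\item The paper's control of $\sum_i\nabla\psi^TE_n^i$ needs $\E m_0^2m_1p_1<\infty$, hence fourth moments of $m_1$ and $p_1$. You need only $\E m_0^4$, $\E p_2^2$ and second moments of $m_1$ and $p_1$.
\item Your integral form of the mean value theorem is the right tool for vector-valued $\varphi$, where a single intermediate point need not exist.
\end{itemize}
The paper's route, in return, gives the influence-function approximation of $\thetahat\noti-\thetahat$, uniformly in $\lambda$, as a stand-alone lemma.

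\textbf{One step in Step 2 needs reordering.} You infer $\max_i\norm{\hat J_i^{-1}}=\Opr(1)$ from the bound on $\crl{\partial_\theta\E\varphi}^{-1}$ over $\Theta$. But $\hat J_i$ is close to the segment average $\int_0^1J(\thetahat+t\Delta_i)\,dt$, not to $J$ at one point, and averages of invertible matrices can be singular. The claim therefore holds only once the segments are known to be short, which is exactly what you then derive from it. The paper hides the same issue behind its single point $\theta_i^*$.

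Break the loop by proving $\max_i\norm{\thetahat\noti-\theta_0}\prconv0$ directly:
\begin{itemize}
\item Uniformly in $i$, $\sup_{\theta\in\Theta}\norm{n^{-1}\sum_{j\neq i}\varphi(Z_j,\theta)-\E\varphi(Z,\theta)}$ is at most the full-sample uniform-LLN error plus $n^{-1}\max_i m_0(Z_i)=\opr(1)$.
\item $\inf_{\theta\in\Theta,\,\norm{\theta-\theta_0}\geq\epsilon}\norm{\E\varphi(Z,\theta)}>0$ follows from uniqueness of $\theta_0$, continuity and compactness.
\end{itemize}
With $\theta_0$ interior, this also places all segments in $\Theta$. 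Continuity of $J$ at $\theta_0$ then gives $\max_i\norm{\hat J_i-J}\prconv0$, and the rest of your argument goes through as written.
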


When $\varphi(z,\theta) = \nabla_{\theta}\log f(z,\theta)$ for some density or probability mass function $f$, $J$ is the Fisher matrix in  the model. Furthermore, $C=K$ if $\psi = \log f$, where $K = \Var\varphi(Z,\theta_0)$. Hence, \cref{thm:Extension of Stone} simplifies to $\sum_{i=1}^n\psi\p{Z_i,\thetahat\noti} = \textup{TIC}+\opr(1)$, where TIC is Takeuchi's information criterion (TIC), defined in \cref{section:Information criteria}. Additionally, if the model $f$ is specified correctly, $K = J$ and $\sum_{i=1}^n\psi\p{Z_i,\thetahat\noti} = \ell_n\p{\thetahat} - p +\opr(1)$. The right hand side of this equation is easily recognized as Akaike's information criterion (AIC), leading \cref{thm:Extension of Stone} to simplify to the results of \citet{stone1977asymptotic} when $\varphi(z,\theta) = \nabla_\theta \log f(z,\theta)$ and the parametric model is specified correctly. 

\cref{thm:Extension of Stone} shows that CV and TE are more similar than one might initially think. In fact, the only thing separating the two quantities is a term of order $\Opr(1/n)$, which is dominated by the size of TE itself, asymptotically speaking. In smaller samples, however, $n^{-1}\Tr\p{J^{-1}C}$ might be quite large and will ensure that tuning with respect to TE and CV give different results. That being said, there are cases where $n^{-1}\Tr\p{J^{-1}C}$ is not negligible and the difference between TE and CV will matter also in the limit. We will now turn our attention towards such a case.

\subsection{When more precision is needed}\label{section:When more precision is needed}
For each parameter $\theta$ let $R(\theta)$ be some risk function, and let $\thetahat$ be some estimator of $\theta$ based on data $Z_1,\ldots,Z_n$. Assume further that $\thetahat\prconv\theta_0$ and $\sqrt{n}\p{\thetahat-\theta_0}\distconv\text{N}\p{0,V}$ for some matrix $V$. By Taylor expanding $\psi$ around $\theta_0$ and using the law of total expectation, one can show that, provided sufficient regularity,
\begin{equation}\label{eq:Risk function Taylor expansion}
	\E\crl{R(\thetahat)} = R(\theta_0) + n^{-1}\sqb{\nabla_{\theta_0}R(\theta_0)c + (1/2)\Tr\crl{H_{\theta_0}R(\theta)V}} + o(1/n),
\end{equation}
where $c$ is equal to the limit of $n\E\p{\thetahat-\theta_0}$, see \citet{daehlenaccurate} for more on this parameter. If $\thetahat\in\R$ and $R(\theta) = (\theta_0 - \theta)^2$, \eqref{eq:Risk function Taylor expansion} reduces to the classical bias-variance decomposition of the MSE. With $R(\theta)$ equal to the error rate of certain classification procedures, \eqref{eq:Risk function Taylor expansion} reduces to the expressions given in \citet{oneill1980general}.

Note that \eqref{eq:Risk function Taylor expansion} consists of three parts: a constant part, $R(\theta_0)$,  a term of size $O(1/n)$ and a remainder term of smaller order. For two estimators $\thetahat_1$ and $\thetahat_2$ consistent for different values $\theta_0^1$ and $\theta_0^2$ respectively, the difference between their expected risk will asymptotically be dominated by how much $R(\theta_0^1)$ and $R(\theta_0^2)$ differ, making the $O(1/n)$ terms in \eqref{eq:Risk function Taylor expansion} negligible asymptotically. If, on the other hand $\theta_0^1$ and $\theta_0^2$ are equal, the difference in expected risk for $\thetahat_1$ and $\thetahat_2$ will be dominated by the difference between the two corresponding terms of size $O(1/n)$ in \eqref{eq:Risk function Taylor expansion}. In this latter situation, we therefore need to estimate the $O(1/n)$-effect from data in order to properly distinguish the risk of $\thetahat_1$ from that of $\thetahat_2$. Both TE and CV attempt to estimate $\E R(\thetahat)$, but, as we will see, neither estimator is precise enough capture the $O(1/n)$ effects.

Let $R_n(\theta) = n^{-1}\sum_{i=1}^n\psi(Z_i,\theta)$ and $\thetahat$ be some estimator consistent for $\theta_0$. Assuming $\psi$ is sufficiently smooth, a Taylor expansion shows
\begin{equation}
	\TE = R_n(\theta_0) + \p{\thetahat-\theta_0}^T\nabla_{\theta_0}R_n(\theta) + (1/2)\p{\thetahat-\theta_0}^TH_{\theta_0}R_n(\theta)\p{\thetahat-\theta_0} + \opr(1/n).
\end{equation}
Since $R_n(\theta_0)$ is a mean and $\thetahat\prconv\theta_0$, we have $\TE\prconv R(\theta_0)$, which means that TE does a good job, as far as estimating the constant part of \eqref{eq:Risk function Taylor expansion} goes. Approximating the term of order $O(1/n)$ is, however, another story. The most obvious reason is perhaps that the remaining terms are nowhere near being consistent for $n^{-1}\sqb{\nabla_{\theta_0}R(\theta_0)c + (1/2)\Tr\crl{H_{\theta_0}R(\theta)V}}$, but even if they were, TE would not be precise enough to properly catch effects of size $O(1/n)$, as $R_n(\theta_0) = R(\theta_0) + \Opr(1/\sqrt{n})$ by the central limit theorem. Since a term of order $\Opr(1/\sqrt{n})$ will dominate $O(1/n)$ effects, the $O(1/n)$-terms in \eqref{eq:Risk function Taylor expansion} are too small to be captured by TE and will be washed out by the error of $R_n(\theta_0)$. As a result, TE is not well suited for evaluating estimators whose limiting risk differs only in the $O(1/n)$-term. One might hope that this is a defect of the TE only, and that CV will be precise enough to capture the $O(1/n)$ effects. Sadly, \cref{thm:Extension of Stone} stops this idea dead in its tracks. Since the difference between CV and TE is a term of order $\Opr(1/n)$, the added precision of CV is not sufficient to correct for the $\Opr(1/\sqrt{n})$ error TE makes. This is pointed out in the context of model selection of linear regression models in \citet{shao1993linear}.

The above might lead one to believe that the difference between CV and TE is negligible and that there is no reason to use CV rather than TE in analysis. This conclusion is, however, slightly too negative, as there is an important difference between CV and TE in expected value. To see this, assume that $\E\psi(Z,\theta_0) = 0$ and that $\thetahat = \theta_0 + J^{-1}n^{-1}\sum_{i=1}^n\varphi(Z_i,\theta_0) + \opr(1/\sqrt{n})$. Then, by the central limit theorem, $\sqrt{n}\p{\thetahat-\theta_0}$ and $\sqrt{n}\crl{R_n(\theta_0)-R(\theta_0)}$ converge jointly in law to a central normal distribution with variance matrix
\begin{equation*}
	\Sigma = \begin{pmatrix}
		J^{-1}K(J^{-1})^T&J^{-1}C^T\\
		C^T\p{J^{-1}}^T&\Var\psi(Z,\theta_0)
	\end{pmatrix},
\end{equation*}
where $J$ and $C$ are as in \cref{thm:Extension of Stone} and $K=\Var\varphi(Z,\theta_0)$. Hence, provided all necessary quantities are uniformly integrable (see e.g.~\citet[p.~31]{billingsley2013convergence}), $\E(\TE) = R(\theta_0) + \Tr\p{CJ^{-1}} + (1/2)\Tr\crl{H_{\theta_0}R(\theta)J^{-1}K(J^{-1})^T} + o(1/n)$ which misses $\E R(\thetahat)$ by a term $n^{-1}\Tr\p{J^{-1}C} + o(1/n)$. Because of this, TE is a biased estimator of $\E R(\thetahat)$. CV, on the other hand, satisfies $\CV = \TE -\Tr\p{J^{-1}C}+o(1/n)$ by \cref{thm:Extension of Stone}. Hence, arguing similarly as for TE, we get $\E\CV = \E R(\thetahat) + o(1/n)$, ensuring that in expected value, CV is able to separate between estimators whose risk differ only in the $O(1/n)$-term. We stress that this holds in expected value only, and that the above shows nothing more than the fact that the bias of CV is of order $o(1/n)$. As noted before, the error of CV will be of order $\Opr(1/\sqrt{n})$ for any single sample, a term which is too large to properly capture effects of size $O(1/n)$.

The discussion given in the previous paragraphs has consequences for our theorems concerning the limiting behaviour of $\thetahat(\lambdahat)$. By the arguments above, neither TE nor CV is able to separate estimators whose risk differs only in the $O(1/n)$ term. Hence, when tuning $\lambda$ with respect to CV or TE, we should not expect that the criteria are able to identify the optimal value of $\lambda$ in this $O(1/n)$-regime. This also explains why tuning with respect to TE is asymptotically equivalent to tuning with respect to CV. 

The fact that the theory breaks down when $\theta_0(\lambda)$, the solution to $\E\varphi(Z,\theta,\lambda)=0$, is constant as a function of lambda, can also be seen from \cref{thm:alpha TE}. When $\theta_0(\lambda)$ is constant as a function of $\lambda$, the equation $\Psi(\alpha)=0$ has multiple solutions and $\Psi'(\alpha_0)$ will be singular. Because of this, the conditions of \cref{thm:alpha TE} do not hold true when $\theta_0(\lambda)$ is constant, making the theorem inapplicable in this case. This does not pose a problem for regularized estimators like in ridge regression, as $\theta_0(\lambda)$ will rarely be constant in this case, but for hybrid estimators, this can indeed be quite problematic. Take for instance the hybrid generative-discriminative classification model where $\thetahat(\lambda)$ is set to the maximizer of the following expression $n^{-1}\sum_{i=1}^n\crl{\lambda\log f_{Y\,|\,X}(Y_i,\theta\,|\,X=X_i) + (1-\lambda)\log f_{(Y,X)}(Y_i,X_i,\theta)}$ for some parametric model $f_{(Y,X)}$ for $(Y,X)$ and where $f_{(Y\,|\,X)}$ is derived from $f_{(Y,X)}$. In this case, $\thetahat(\lambda)$ aims for the maximizer of $\Gamma_\lambda(\theta) = \lambda\E\log f_{(Y,X)}(Y,X,\theta) + (1-\lambda)\E\log f_{Y\,|\,X}(Y,X,\theta)$. If the model is misspecified, $\theta_0(\lambda)$ will rarely be constant as a function of $\lambda$. If, on the other hand, the model does hold and the true underlying distribution is $f_{(Y,X)}(\cdot,\theta_0)$ for some $\theta_0\in\R^p$, $\theta_0(\lambda)$ will be constantly equal to $\theta_0$. Because of this, the difference in expected limiting risk will be on the order of $O(1/n)$ when the model is specified correctly and $O(1)$ otherwise. Hence, \cref{thm:thetahat truncated TE} and \cref{thm:thetahat CV asymptotically equivalent thetahat TE} can be applied in this case only when the model is misspecified. See Section S2 in the supplementary material for more discussions.

\section{When the effect of tuning disappears}\label{section: When phi = psi}

We will now go through a situation in which the expressions in \cref{cor:thetahat TE} simplify greatly and the effect of the tuning procedure becomes negligible asymptotically. 

Assume there exists a $\lambda_1$ such that $\varphi(y,\theta,\lambda_1) = \nabla_{\theta}\psi(y,\theta)$ for all $\theta$ and $F$-almost all $y$. Then, $\theta_0(\lambda_1)$ solves $\E\varphi(Z,\theta,\lambda_1)=\nabla_{\theta}\E\psi(Z,\theta) = 0$. Because of this, $\lambda_0$, $b$ and $Z_1$ defined in \cref{cor:thetahat TE} are equal to $\lambda_1$, $0$ and $J$, respectively. Entering this into the expressions in the corollary, shows that the right hand side of \eqref{eq:Influence of thetahat TE} is equal to $n^{-1}J^{-1}\varphi(z,\theta_0,\lambda_1)$. This is the influence function of $\thetahat(\lambda_1)$. Hence, $\sqrt{n}\p{\thetahat(\lambdahat)-\theta_0} = \sqrt{n}\crl{\thetahat(\lambda_1)-\theta_0}+\opr(1)$ in this case, and the additional randomness introduced by tuning $\lambda$ vanishes asymptotically. In practice, this means that one can ignore the effect of tuning $\lambda$ and use the classic approximation $\sqrt{n}\crl{\thetahat(\lambdahat)-\theta_0}\distconv\text{N}\crl{0,J^{-1}K\p{J^{-1}}^T}$, where $J = -\partial_{\theta_0}\E\varphi(Z,\theta,\lambda_1)$ and $K=\E\varphi(Z,\theta_0,\lambda_1)\varphi(Z,\theta_0,\lambda_1)^T$. 

A $\lambda_1$ such that $\nabla_\theta\psi(z,\theta)=\varphi(z,\theta,\lambda_1)$ actually exists in quite a few situations. In for instance ridge regression where $\lambda$ is set to the minimizer of estimated MSE, we have $\varphi(z,\beta,\lambda) = \nabla_{\beta}\crl{\p{y-\beta_{1:}^Tx - \beta_0}^2 + \lambda\norm{\beta_{1:}}^2}$, where $\beta =(\beta_0,\beta_{1:}^T)^T$, ensuring $\nabla_{\beta}\psi(z,\beta) = \varphi(z,\beta,0)$. More generally, for any model $Y_i = g(X_i,\theta) + \epsilon_i$ fitted by minimising $\sum_{i=1}^n\crl{Y_i-g\p{X_i,\theta}}^2 + \lambda\norm{\theta}^2$ and tuned according to the MSE, $\nabla_{\theta}\psi(z,\theta) = \varphi(z,\theta,0)$. Even more generally, for a model fitted by minimising $\sum_{i=1}^n h\crl{Y_i,g\p{X_i,\theta}} + \lambda\norm*{\theta}^2$ for some function $h$, and where $\lambda$ is tuned by minimising the CV or TE estimate of $\E h\sqb{Y,g\crl{X,\thetahat(\lambda)}}$, $\nabla_\theta\psi(z,\theta) = \varphi(z,\theta,0)$, ensuring that $\sqrt{n}\crl{\thetahat(\lambdahat)-\theta_0}$ and $\sqrt{n}\crl{\thetahat(0)-\theta_0}$ are asymptotically equivalent. This includes many regularized likelihood models when cross validated Kullback-Leibler divergence is used to tune $\lambda$.

The above arguments might lead one to believe that there rarely is need to use \cref{thm:thetahat truncated TE} or \cref{thm:thetahat CV asymptotically equivalent thetahat TE} in practice. This is, however, not correct. To illustrate the potentially large effect tuning of $\lambda$ can have on the limit distribution of $\thetahat(\lambdahat)$, we performed a small simulation experiment. For a selection of $\lambda$-values, we fitted logistic regression in two variables with a ridge penalty term. This corresponds to setting $\varphi$ equal to the gradient of $y\p{\beta_0 + \beta_1x_1 + \beta_2x_2} - \log\p{1+\beta_0 + \beta_1x_1 + \beta_2x_2} - \lambda\p{\beta_1^2 + \beta_2^2}$ with respect to $\beta = (\beta_0,\beta_1,\beta_2)$. We chose $\lambdahat$ by minimising CV$(\lambda)$ with $\psi(z,\theta) = \sqb{\exp\p{\beta_0 + \beta_1x_1}/\crl{1+\exp\p{\beta_0 + \beta_1x_1}} - y}^2$. For the true underlying distribution of the data, we chose the following. The binary variable $Y$ was set to be Bernoulli distributed with equal probability for zero and one. Furthermore, for a selection of $C$-values, we used $X\,|\,Y = 0\sim \text{N}\p{\mu,\Sigma}$ and $X\,|\,Y = 1\sim\text{N}\p{-\mu,2\Sigma}$ with $\mu = (1/2,1/2)^T$ and $\Sigma$ equal to the matrix with diagonal elements $2$ and off-diagonal entries $-\sqrt{C/2}$.

For each  selected $C$-value, we simulated $n=100$ data points and computed $\lambdahat_C$ and $\hat{\beta}_C(\lambdahat)$ based on the data set. We used cross-validation to tune $\lambda$. This procedure was repeated $B = 1000$ times for each $C$, and based on these $B$ samples, we computed the empirical variance of $\sqrt{n}\hat\beta_C(\lambdahat_C)$ for each $C$. We also estimated the variance using both the classic method and \cref{thm:thetahat CV asymptotically equivalent thetahat TE}. The results are summarized in \cref{Fig:Simmulation variances}, which shows the error of the two estimators as functions of $C$.  From \cref{Fig:Simmulation variances} it is clear that the classic estimator is insufficient in this case. The variance of \cref{thm:thetahat CV asymptotically equivalent thetahat TE} is closer to the observed variability in almost all cases. Furthermore, we see that as $C\to0$, the classic variance estimate decreases in effect. This is particularly clear for the estimate of the variance of the last component of $\sqrt{n}\hat{\beta}_C(\lambda_C)$, where the variance estimator provided by the classic method is a lot worse than the one proposed in this article and seems to increase even more as $C\to0$. This simulation is of course only a toy example, but \cref{Fig:Simmulation variances} indicates that the error introduced by using the classic estimator of the variance potentially can be very large.

\begin{figure}
	\centering
	\includegraphics[width=0.95\linewidth]{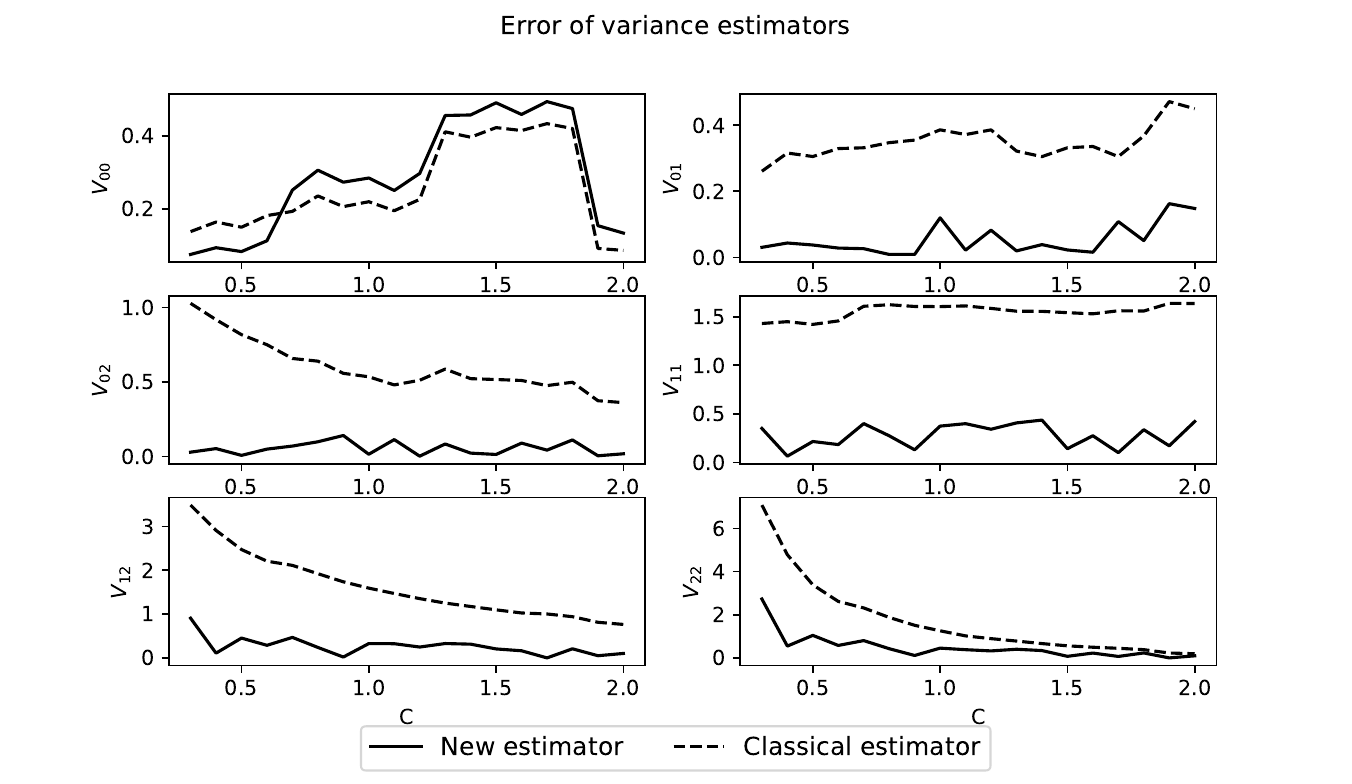}
	\caption{\label{Fig:Simmulation variances} The plot shows the absolute error of the classic and new estimated covariance of $\sqrt{n}\hat{\beta}_C(\lambdahat_C)$ compared to the observed variance based on a thousand simulated data sets with $n=100$ data points. The variables $V_{jk}$ refer to the covariance between the $j$-th and $k$-th component of $\sqrt{n}\betahat_C(\lambdahat_C)$ for $j=0,1,2$.}
\end{figure}

\section{Examples}\label{section:Illustrations}
We will now go through a couple of applications illustrating our theoretical framework.

\subsection{Diabetes in Pima Indians}\label{section:Pima indians}
The data set \texttt{pima.indians.diabetes}, publicly available in the R-package \texttt{mlbench} contains information about female Pima Indians and prevalence of diabetes. Denote the data set by $z_1,\ldots,z_n$ where $z_i = (y_i,x_i^T)^T$ with $x_i$ being a vector of covariates and $y_i$ a binary variable encoding whether or not the individual has diabetes for $i=1,\dots,n$. We fit a penalized logistic regression model to these data. For some pre-specified value of $\lambda$, this corresponds to maximising the following expression
\begin{equation}\label{eq:Likelihood in motivating example}
	\ell_n(\beta, \lambda) = \sum_{i=1}^n\sqb{y_i\log p\p{x_i,\beta} + (1-y_i)\log\crl{1-p(x_i,\beta)}} - n\lambda\norm{\beta_{1:}}^2
\end{equation} 
where $p(x,\beta) = \sqb{1+\exp\crl{-\beta_0 - \beta_{1:}^Tx}}^{-1}$ when $\beta = \p{\beta_0,\beta_{1:}^T}^T$. To fit this model, we need to decide upon a value for the tuning parameter $\lambda$. One option is to choose $\lambda$ minimising the Brier score defined as  BS $= n^{-1}\sum_{i = 1}^n\sqb{y_i-p\crl{x_i, \betahat(\lambda)}}^2$ where $\betahat(\lambda)$ is the minimizer of \eqref{eq:Likelihood in motivating example}. This ensures that the model has good predictive ability. In most cases, however, BS will underestimate the true error. To mitigate this, we will instead choose $\lambda$ minimising the cross-validated estimate of the Brier score, defined as $n^{-1}\sum_{i = 1}^n\sqb{1-p\crl{x_i, \betahat(\lambda)_{(-i)}}}^2$.

With the procedure described above, we find that the minimizer of the cross-validated Brier score is equal to $\lambdahat = 0.0085$. Fixing $\lambda$ at this value, we can now fit a penalized logistic regression model to the data by minimising \eqref{eq:Likelihood in motivating example} evaluated at $\lambda = \lambdahat$. Furthermore, by classic asymptotic likelihood theory, we know that for each fixed $\lambda$, $\betahat(\lambda)$ is consistent for some well-defined quantity $\beta(\lambda)$ and that $\sqrt{n}\crl{\betahat(\lambda) - \beta(\lambda)}$ converges in law to a N$\crl{0,J(\lambda)^{-1}K(\lambda)J(\lambda)^{-1}}$ distribution where $J(\lambda)$ and $K(\lambda)$ are well-defined matrices, see e.g.~\citet[Ch.~5]{van2000asymptotic}. By considering $\lambda$ as being fixed at $\lambdahat$, we can therefore approximate the limiting distribution of $\betahat(\lambdahat)$ by a N$\crl{\beta(\lambda),J(\lambdahat)^{-1}K(\lambdahat)J(\lambdahat)^{-1}/n}$ distribution, and use it to make inference about $\beta(\lambda)$. The resulting estimated marginal densities of each component of $\betahat(\lambdahat)$ are shown in \cref{fig:Motivating example old fits}. We remark that the very small value of $\lambdahat$ is a consequence of the penalization term being scaled by $n$, leading the ``actual'' size of the penalization to be $n\lambdahat \approx 3.342$. 

The above approximate distribution is based on pointwise results which do not take the randomness introduced by tuning $\lambda$ into account. This can, however, be done with the results of this paper. Assuming all regularity conditions are in place, we can apply the theory of this article to $\betahat(\lambdahat)$ by setting $\varphi(z,\beta,\lambda) = \nabla_{\beta} \sqb{y\log p\p{x,\beta}-(1-y)\log\crl{1 - p (x,\beta)} - \lambda\norm*{\beta_{1:}}^2}$ and $\psi(z,\beta) = \crl{1-p\p{x,\beta}}^2$. By \cref{thm:thetahat CV asymptotically equivalent thetahat TE}, $\sqrt{n}\crl{\betahat(\lambdahat)-\beta_0}$ converges in distribution to a central normal distribution with variance as described in \cref{cor:thetahat TE} and which can be estimated using the approximations of \cref{thm:Estimators of variance}. For each $j=0,\ldots,8$, densities of the marginal approximated distributions of $\betahat(\lambdahat)_j$ together with the densities obtained using the classic, pointwise results as well as histograms based on $B = 2000$ bootstrap samples of $\betahat(\lambdahat)_j$ can be found in \cref{fig:Motivating example old fits} which was discussed briefly in the introduction.

From \cref{fig:Motivating example old fits} we note that when taking the added randomness of the tuning procedure into account, we get larger estimated variances than when using the classic pointwise results. This is promising, as the estimated variance of $\betahat(\lambdahat)_j$  was closer to the empirical variance in the bootstrap sample for all but one value of $j$. For $\betahat(\lambdahat)_2$, the classic variance estimator performed better than what we get from \cref{thm:Estimators of variance}. A quick glance at \cref{fig:Motivating example old fits}, however, reveals that neither of the approximate distributions seem to fit well with the bootstrap distribution in this case. In fact, the histogram of this component is quite skewed, giving reason to believe that convergence has not been achieved for this particular parameter.

Although the results above are slightly in the favour of the theorems derived in the present article, the limiting distributions we get using our and the classic method are not astoundingly different. It is, however, important to remember that the classic result is a pointwise result concerning the limiting behaviour of $\betahat(\lambda)$ when $\lambda$ is a fixed number. Because of this, the classic result really does not apply to $\betahat(\lambdahat)$ and cannot be used to say anything about the limiting behaviour when $\lambdahat$ is set by tuning, as neither asymptotic normality nor consistency of $\betahat(\lambdahat)$ follows from standard theory. By using the classic pointwise method regardless of this fact, we are therefore really taking a gamble, hoping that the added randomness of the tuning procedure will be negligible. \cref{thm:thetahat CV asymptotically equivalent thetahat TE} and \cref{thm:Estimators of variance}, on the other hand, do take the tuning procedure into account and can safely be applied to make valid asymptotic inference.

\subsection{Hybrid likelihood}\label{section:Hybrid likelihood}
In the book \textit{The better angels of our nature}, \citet{pinker2012better} argues that the world is changing for the better and that violence is on the decline. This claim quickly stirred up heated debate in the academic community and since its publication a large number of articles have either attempted to support or refute Pinker's claim, see e.g.~\citet{cirillo2016decline, cirillo2016statistical} or \citet{cunen2020statistical}. Recently, \citet{daehlen2024hybrid} investigated whether conflicts have become less violent over time. We will now use the theory derived in the present article to improve the analysis done in that paper.

\citet{daehlen2024hybrid} uses data from the Correlates of War data base (\citet{CoW}) concerning the number of battle deaths in the 95 most recent and concluded inter-state wars. The analysis is done by first splitting the data into ``older'' and ``newer'' wars with the Korean war as the cut-off point. To each of these data sets, they fit shifted log-normal models and use the result to make inference about the difference in the median number of battle deaths for older and newer wars. They also repeat the analysis for the difference in upper quartiles. 

Further, to fit the parametric models, the framework of hybrid likelihood is used. We will not go into details concerning this method, but roughly speaking, they fit parametric models $f_\theta$ by minimising a convex combination of the log-likelihood function and a nonparametric counterpart called the log-empirical likelihood function, see e.g.~\citet{owen2001empirical}. How much weight is put on either of these components is decided by a parameter called the balance parameter $a$ taking values in $[0,1)$. 

Moreover, the FIC is used to select values for the balance parameters $a_1$ and $a_2$. When investigating change in the median number of battle deaths, the median is used as focus parameter and the empirical median as the consistent estimator $\hat\mu_0$. When studying the difference in upper quartile, the $0.75$-quantile and the empirical $0.75$-quantile are used in place of the median and empirical median. By standard results, the empirical $p$-quantile satisfies $\opr(1/\sqrt{n}) = n^{-1}\sum_{i=1}^n\xi(Y_i,\mu)$ where $\xi(y,\mu) = \mu -\mu_p - \crl{p-I(y\leq\mu_p)}/f(\mu_p)$ and $\mu_p$ is the $p$-quantile in the underlying distribution of the data. Furthermore, by the results of \citet{daehlen2024hybrid}, the estimators $\thetahat_1(a_1)$ and $\thetahat_2(a_2)$ can both be expressed as components of Z-estimators for each fixed value of $a_1$ and $a_2$. Here $\thetahat_1(a_1)$ and $\thetahat_2(a_2)$ are the maximum hybrid likelihood estimators for the older and newer wars respectively using the balancing parameter given in the parenthesis. Hence, by making modifications as explained in \cref{section:Information criteria}, we can use \cref{cor:thetahat TE} to derive the limiting distribution of $\thetahat_1(\hat a_1)$ and $\thetahat_2(\hat a_2)$.

By combining the above with the estimators given in \cref{thm:Estimators of variance}, we can approximate the variance of $g\crl{\thetahat(\hat a_1)} - g\crl{\thetahat(\hat a_2)}$, where $g(\theta)$ is the median in a log-normal distribution parametrized by $\theta$, in a way taking the added randomness introduced by tuning $a_1$ and $a_2$ into account. Using this method, we get an estimated standard deviation of 18431. That is a lot higher than 3815, which is obtained using the pointwise results in \citet{daehlen2024hybrid}. This is a consequence of \citet{daehlen2024hybrid} not incorporating the added randomness introduced by tuning $a_1$ and $a_2$, leading to a much too small estimated standard deviation.

For the difference in upper quartiles, $\hat a_1 = 0$ and $\hat a_2$ is chosen as the largest accepted value of $a_2$ (in this case $0.99$). Because of this, FIC$_j'(\hat a_j)\neq 0$ for $j =0,1$ and \cref{thm:thetahat truncated TE} leads us to approximate the limiting distribution of $\thetahat_j(\hat a_j)$ by the pointwise limit of $\thetahat_j(\hat a_j)$ for $j = 0,1$. This leads to the same approximate distribution as what is used in \citet{daehlen2024hybrid}, and hence, all estimated standard deviations agree. As a result, ignoring the effect of tuning of $a_1$ and $a_2$  becomes unproblematic in this case.

\section{Concluding remarks}\label{section:Concluding remarks}
We have studied the effect tuning procedures has on the limiting distribution of $\thetahat(\lambdahat)$. We covered multiple ways of tuning, including minimization of information criteria, the training error and the cross-validated estimator of the risk. In addition, we have defined consistent estimators for the limiting variance of $\sqrt{n}\thetahat(\lambdahat)$ and proved a result sharply characterizing the distance from CV to TE. Lastly, we went through a simulation setting and applied the theory on two real data sets.

Our work does have some limitations. The perhaps most pressing issue with our current theory is the inability to handle none-smooth functions $\varphi$ and $\psi$. This excludes multiple interesting settings. Since, $\psi$ must be smoothly differentiable, tuning with respect to the absolute error or error rate is not covered by our theory. Furthermore, smoothness of $\varphi$ excludes many interesting models, like the Lasso or other models penalized by an $L_1$-norm. This does, of course, make our theory less applicable, but the smoothness assumptions are crucial for our proofs, and we believe that extending our theory to cover non-smooth settings is far from trivial.

Further, we have assumed that for each $\lambda$, $\thetahat(\lambda)$ is a Z-estimator. This assumption is necessary for the proof of \cref{thm:thetahat CV asymptotically equivalent thetahat TE} and \cref{thm:Extension of Stone}, but might be more strict than what is really needed at least for the latter theorem. By the informal arguments preceding \cref{thm:Extension of Stone}, one would expect that \cref{thm:thetahat CV asymptotically equivalent thetahat TE} and \cref{thm:Extension of Stone} hold true as long as $\thetahat(\lambda)$ has an influence function IF, such that $\thetahat(\lambda) = \theta_0(\lambda) + \textup{IF}(Y_i,\theta,\lambda) + \epsilon_n(\theta,\lambda)$ with ``regular enough'' remainder term. Formalizing this intuition in a proper way would certainly extend our framework, but will require proving a more general version of Lemma S1.3 in the supplementary material.

Finally, we have focused on \cref{thm:thetahat truncated TE} and \cref{thm:thetahat CV asymptotically equivalent thetahat TE} as our goal was to study the limiting behaviour of $\thetahat(\lambdahat)$ when $\lambdahat$ is set by some tuning procedure. These results are, however, corollaries of \cref{thm:alpha TE}, which gives the limiting distribution of the full vector $(\thetahat(\lambdahat)^T, \lambdahat^T, \thetahat'(\lambdahat)^T)^T$. In particular, \cref{thm:alpha TE} can be used to study the limiting behaviour of $\lambdahat$. This can, in turn, be applied to make inference about $\lambda$ and create approximate confidence intervals and hypothesis tests for the tuning parameter. For instance, it might be used to formally test if a regularization is beneficial or if either of the two components in a hybrid model is significantly preferable.

\section*{Supplementary Materials}
Proofs, lemmas and further discussions can be found in the article's supplementary material.

\section*{Acknowledgments}
This work is partially funded by the Norwegian Research council through the BigInsight center for innovation driven research (237718). Partial support was also given from Centre for Advanced Study, at the Academy of Science and Letters, Oslo, in connection with the 2022-2023 project Stability and Change, led by H. Hegre and N.L. %The authors are indebted to insightful input from our reviewers, which helped improve our manuscript.

\bibliography{references}

\end{document}

% --- supplement: supplementary.tex ---

\centerline{\large\bf The asymptotic effect of tuning parameters - Supplementary material}
\vspace{.4cm} 
\centerline{Ingrid Dæhlen$^{1,2}$, Nils Lid Hjort$^1$ and Ingrid Hobæk Haff$^1$} 
\vspace{.4cm} 
\centerline{$^1$ Department of Mathematics, University of Oslo}
\centerline{$^2$ Norwegian Computing Center, Post box 114 Blindern, Oslo, 0314, Norway}
\vspace{.4cm} 
Address for correspondence: Ingrid Dæhlen, Department of Mathematics, University of Oslo, Moltke Moes vei 35, 0851 Oslo, Noway, email: ingrdae@math.uio.no

\section{Proof of Lemma 1 and Theorem 6}
We start by showing a lemma regarding uniform consistency.
\begin{lemma}\label{lemma:Uniform consistency}
	Let $Y\in\R^d$, $\theta\in\R^p$ and $\lambda\in\Lambda\subseteq\R^q$ where $\Lambda$ is a compact set. With $\Psi_{\lambda,n}\colon\R^{d+p}\to\R^p$ and $\Psi_\lambda\colon\R^p\to\R^p$ for each $\lambda\in\Lambda$, let $\theta_0(\lambda)$ be the unique zero of $\Psi_\lambda(\theta)$ for each fixed $\lambda$ and assume the following.
	\begin{itemize}
		\item [(C1)]
		There exists $\epsilon>0$ such that $\Psi_{\lambda,n}\prconv\Psi_\lambda$ uniformly on $\mathcal{S}_\epsilon = \crl{(\theta,\lambda)\,|\,\lambda\in\Lambda,\, \norm{\theta_0(\lambda)-\theta}\leq\epsilon}$ 
		
		\item[(C2)]
		The function $(\theta,\lambda)\mapsto\Psi_\lambda(\theta)$ is continuously differentiable on $\mathcal{S}_\epsilon$ with $\Psi_\lambda'\{\theta(\lambda)\}$ non-singular for each $\lambda\in\Lambda$.
	\end{itemize}
	Then, if $\Psi_{\lambda,n}(\theta)$ has at most a single zero $\thetahat(\lambda)$ for each $n$ and each $\lambda$, 
	\begin{equation}\label{eq:Consistency equation in appendix}
		\sup_\lambda\norm*{\thetahat(\lambda)-\theta_0(\lambda)}\prconv0.
	\end{equation}
\end{lemma}
\begin{proof}
	The following is a modified version of the proof of  theorem 5.42 in \citet{van2000asymptotic}.	
	
	By (C2) and the implicit function theorem, $\theta(\lambda)$ is continuous as a function of $\lambda$ in $\Lambda$. Hence, $\crl{\theta(\lambda)\,|\,\lambda\in\Lambda}$ is a compact set. This implies that $\mathcal{S}_\epsilon$ is compact. Because of this, the extreme value theorem and the fact that $\Psi'_\lambda\{\theta(\lambda)\}$ is non-singular for all $\lambda\in\Lambda$, we can choose $\epsilon$ to be small enough that $\Psi_\lambda'(\theta)$ is non-singular for all $(\theta,\lambda)\in\mathcal{S}_\epsilon$, ensuring that $(\theta,\lambda)\mapsto\Psi_\lambda'(\theta)^{-1}$ is continuous on $\mathcal{S}_\epsilon$. By the extreme value theorem, we then have that the norm of each component of $\Psi_\lambda'(\theta)^{-1}$ is bounded by some $C<\infty$ in $\mathcal{S}_\epsilon$. 
	
	Now fix some $\delta>0$ and choose $\delta_1$ smaller than both $\epsilon$ and $\delta/2C$. Arguing as in \citet{van2000asymptotic}, we see that there exists closed neighbourhoods $G^\lambda_{\delta_1}$ of $\theta_0(\lambda)$ such that $\Psi^\lambda$ is a homeomorphism from $G^\lambda_{\delta_1}$ into the closed ball around zero with radius $\delta_1$: $\overline{B_{\delta_1}(0)}$ for each $\lambda$. By construction of $\delta_1$ and the inverse function theorem, the partial derivatives of the inverse of $\Psi_\lambda$ is bounded by $C$ for each $\lambda$. Hence, the diameter of $G_{\delta_1}^\lambda$ is bounded by $2C\delta_1<\delta$ for each $\lambda$ implying $G_{\delta_1}^\lambda\subseteq \overline{B_{\delta}\crl{\theta_0(\lambda)}}$ for each $\lambda$. We will show that $\Psi_{\lambda,n}(\theta)$ has a root in $G_{\delta_1}^\lambda$ for every $\lambda$ with probability tending to one. Since $G_{\delta_1}^\lambda\subseteq \overline{B_{\delta}\crl{\theta_0(\lambda)}}$ for each $\lambda$, this would guarantee \eqref{eq:Consistency equation in appendix}.
	
	Since $\delta_1<\epsilon$, we know
	\begin{equation*}
		\sup_{\lambda\in\Lambda}\sup_{\theta\in G_{\delta_1}^\lambda}\norm*{\Psi_{\lambda,n}(\theta)-\Psi_\lambda(\theta)}\prconv0.
	\end{equation*}
	Let $K_{\delta_1,n}$ be the event that $\norm*{\Psi_{\lambda,n}(\theta)-\Psi_\lambda(\theta)}<\delta_1$ for all $\theta\in G_{\delta_1}^\lambda$ and $\lambda\in\Lambda$. Then the above implies that $\Pr\p{K_{\delta_1,n}}\to1$.
	
	Now let $y\in \overline{B_{\delta_1}(0)}$. Since $\Psi_\lambda$ is a homeomorphism from $G_{\delta_1}^\lambda$ into $\overline{B_{\delta_1}(0)}$ for each $\lambda$, $\Psi_\lambda^{-1}(y)\in G_{\delta_1}^\lambda$ for every $\lambda\in\Lambda$. Hence, on the event $K_{\delta_1,n}$, we have $\norm{\Psi_{\lambda,n}\crl{\Psi_\lambda^{-1}(y)} - y}\leq \delta_1$ for all $\lambda\in\Lambda$, which implies that $y\mapsto y-\Psi_{\lambda,n}\circ\Psi_\lambda^{-1}(y)$ maps $\overline{B_{\delta_1}(0)}$ into itself for every $\lambda\in\Lambda$. Since the maps are continuous, Brouwer's fixed point theorem ensures that $y\mapsto y-\Psi_{\lambda,n}\circ\Psi_\lambda^{-1}(y)$ has a fixed point in $\overline{B_{\delta_1}(0)}$ for every $\lambda\in\Lambda$. This is equivalent to $\Psi_{\lambda,n}$ having a zero in $G_{\delta_1}^\lambda\subseteq \overline{B_{\delta}\crl{\theta_0(\lambda)}}$ for each $\lambda$. Because of this, the probability of all $\Psi_{n,\lambda}$ having a zero within $\delta$ of $\theta_0(\lambda)$ is bounded by $\Pr(K_{\delta_1,n})$. As this latter probability tends to one by the previous arguments, \eqref{eq:Consistency equation in appendix} follows. This concludes the proof.
\end{proof}

When $\Psi_\lambda(\theta) = \E \phi(Z,\theta,\lambda)$ and $\Psi_{\lambda,n}(\theta) = n^{-1}\sum_{i=1}^n\phi(Z_i,\theta,\lambda)$  for some function $\phi\colon\R^{d+p+q}\to\R^p$, condition (C1) can be easily verified using the uniform law of large numbers. We state this in the following corollary.

\begin{corollary}\label{corollary:Uniform consistency}
	Let $\Psi_\lambda(\theta) = \E \phi(Z,\theta,\lambda)$ and $\Psi_{\lambda,n}(\theta,\lambda) = n^{-1}\sum_{i=1}^n\phi(Z_i,\theta,\lambda)$  for some function $\phi\colon\R^{d+p+q}\to\R^p$. Then (C1) follows from 
	\begin{itemize}
		\item[(C3)]
		There exists an $F$-integrable function $m_0\colon\R^d\to\R$ and an $\epsilon>0$ such that $\norm{\phi(z,\theta,\lambda)}\leq m_0(z)$ for all $(\theta,\lambda)\in S_{\epsilon}$ and $F$-almost all $Z$.
	\end{itemize}
\end{corollary}

We will prove a theorem characterizing the distance from $\thetahat(\lambda)$ to $\thetahat\noti(\lambda)$.

\begin{lemma}\label{lemma:appendix lemma for alternative proof}
	For each $\lambda$ in $\Lambda$, a compact subset of $\R^q$ and $1\leq i\leq n$ let $\thetahat(\lambda)$ and $\thetahat_{(-i)}(\lambda)$ be the solutions to $\sum_{j=1}^n\phi(Z_j,\theta,\lambda)$ and $\sum_{j\neq i}\phi(Z_j,\theta,\lambda)$ in $\Theta$, a compact subset of $\R^p$. Assume there exist functions $m_0$ and $m_1$ such that $\norm{\phi\p{z,\theta,\lambda}}\leq m_0(z)$ and $\norm{\partial_\theta\phi\crl{z,\theta,\lambda}}\leq m_1(z)$ for all $\theta, \lambda$ and almost all $z$. Then, if $\theta\mapsto\E\phi(Z,\theta,\lambda)$ satisfies (C2) of \cref{lemma:Uniform consistency} and the eigenvalues of $J(\theta,\lambda)^{-1} = -\crl{\E\partial_\theta\phi(Z,\theta,\lambda)}^{-1}$ are bounded in $\Theta\times\Lambda$, the following holds for all $\lambda$:
	\begin{equation}\label{eq:First bound in lemma for thetahat-thetahati}
		\norm{\thetahat_{(-i)}(\lambda)-\thetahat(\lambda)}\leq a_{0,n}m_0(Z_i)
	\end{equation}
	where $a_{0,n} = \Opr(1/n)$ and does not depend on $i$ or $\lambda$. If, furthermore, the eigenvalues of $\partial_\theta J(\theta,\lambda)$ are bounded in $\Theta\times\Lambda$, we have
	\begin{equation}\label{eq:Second bound in lemma for thetahat-thetahati}
		\thetahat_{(-i)}(\lambda) - \thetahat(\lambda) = n^{-1}J\crl{\theta_0(\lambda),\lambda}^{-1}\phi\crl{Z_i,\theta_0(\lambda),\lambda} + E_n^i(\lambda)
	\end{equation}
	where $E_n^i(\lambda)$ satisfies
	\begin{equation}
		\norm{E_n^i(\lambda)} \leq n^{-1}\crl{a_{1,n} + a_{2,n}m_0(Z_i) + a_{3,n}m_0(Z_i)^2 + a_{4,n}m_0(Z_i)^2m_1(Z_i)}
	\end{equation}
	with $a_{j,n} = \opr(1)$ uniformly in $\lambda$, not depending on $i$ for $j = 1,2,3,4$.
\end{lemma}
\begin{proof}
	By definition we have
	\begin{equation}
		0 = \sum_{j=1}^n\phi\crl{Z_j,\thetahat(\lambda),\lambda}\hspace{3mm}\text{and}\hspace{3mm}0 = \sum_{j\neq i}^n\phi\crl{Z_j,\thetahat_{(-i)}(\lambda),\lambda}.
	\end{equation}
	Hence, a Taylor expansion of $\sum_{j=1}^n\phi\crl{Z_j,\thetahat\noti(\lambda),\lambda}$ around $\thetahat(\lambda)$ reveals
	\begin{align*}
		\phi\crl{Z_i,\thetahat_{(-i)}(\lambda),\lambda} &= \sum_{j=1}^n\phi\crl{Z_j,\thetahat_{(-i)}(\lambda),\lambda} \\
		&= \sum_{j=1}^n\phi\crl{Z_j,\thetahat(\lambda),\lambda} + \p*{\sum_{j=1}^n\partial_{\theta_i^*(\lambda)}\phi\crl{Z_j,\theta,\lambda}}\crl{\thetahat_{(-i)}(\lambda)-\thetahat(\lambda)}\\
		&=-nJ_n\crl{\theta_i^*(\lambda),\lambda}\crl{\thetahat_{(-i)}(\lambda)-\thetahat(\lambda)}
	\end{align*}
	for some $\theta_i^*(\lambda)$s on the line segment between $\thetahat_{(-i)}(\lambda)$ and $\thetahat(\lambda)$, and with $J_n\crl{\theta,\lambda} = -n^{-1}\sum_{j=1}^n\partial_{\theta}\phi(Z_j,\theta,\lambda)$. Because of the above, we can write 
	\begin{equation*}
		\thetahat_{(-i)}(\lambda)-\thetahat(\lambda) = -n^{-1}J_n\crl{\theta_i^*(\lambda),\lambda}^{-1}\phi\crl{Z_i,\thetahat_{(-i)}(\lambda),\lambda}.
	\end{equation*}
	
	Since there exists a function $m_1(z)$ bounding all partial derivatives of $\phi(z,\theta,\lambda)$ with respect to $\theta$, $J_n(\theta,\lambda)$ converges uniformly to $J(\theta,\lambda) = -\E\partial_{\theta}\phi(Z,\theta,\lambda)$ over the compact parameter space $\Theta\times\Lambda$. Furthermore matrix inversion is uniformly continuous over compact sets not containing non-invertible matrices, ensuring that also $-J_n(\theta,\lambda)^{-1}$ converges to $-J(\theta,\lambda)^{-1} = \E\partial_{\theta}\phi(Z,\theta,\lambda)^{-1}$ uniformly in $\lambda$ and $\theta$. Then $-J_n\crl{\theta_i^*(\lambda),\lambda}^{-1} = -J\crl{\theta_i^*(\lambda),\lambda}^{-1} + \epsilon_n^i$ where $\epsilon_n^i = \opr(1)$ uniformly in $\lambda$ and $i$. By assumption, the eigenvalues of $J(\theta,\lambda)^{-1}$ are bounded by a number with absolute value $C_1$. Then  $\norm{\thetahat_{(-i)}(\lambda)-\thetahat(\lambda)} \leq n^{-1}\p{C_1 + \delta_n}\norm{\phi\crl{Z_i,\thetahat_{(-i)}(\lambda),\lambda}}$, where $\delta_n=\opr(1)$ does not depend on $\lambda$ or $i$. Lastly by assumption $\norm{\phi(z,\theta,\lambda)}\leq m_0(z)$. The above then guarantees
	\begin{equation}\label{eq:Bound for thetahati - thetahat first order}
		\norm*{\thetahat_{(-i)}(\lambda) -\thetahat\p{\lambda}} \leq n^{-1}\p{C_1+\delta_n}m_0(Z_i)
	\end{equation}
	for all $\lambda$. This proves \eqref{eq:First bound in lemma for thetahat-thetahati}.
	
	By the previous arguments
	\begin{equation}\label{eq:Second order bound for thetai - theta first}
		\thetahat_{(-i)}(\lambda)-\thetahat(\lambda) = -n^{-1}\sqb{J\crl{\theta_i^*(\lambda),\lambda}^{-1} + \epsilon_n^i}\phi\crl{Z_i,\thetahat_{(-i)}(\lambda),\lambda}.
	\end{equation}
	where $\epsilon_n^i = \opr(1)$ uniformly in $i$ and $\lambda$.	We will again use Taylor expansions to arrive at a result. We start with the second factor on the right hand side: $J\crl{\theta_i^*(\lambda),\lambda}^{-1} = J\crl{\theta_0(\lambda),\lambda}^{-1} + \p*{\partial_{\theta_i^{**}(\lambda)}J\crl{\theta,\lambda}^{-1}}\crl{\theta_i^*(\lambda) - \theta_0(\lambda)}$, where $\theta_i^{**}(\lambda)$ is some vector on the line segment between $\theta_0(\lambda)$ and $\theta_i^*(\lambda)$. Now since $\theta_i^*(\lambda)$ lies on the line segment between $\thetahat(\lambda)$ and $\thetahat_{(-i)}(\lambda)$, $\norm{\theta_i^*(\lambda)-\theta_0(\lambda)}\leq \norm{\thetahat(\lambda)-\theta_0(\lambda)} + \norm{\thetahat_{(-i)}(\lambda)-\thetahat(\lambda)}$. Combining this with \cref{lemma:Uniform consistency} and \eqref{eq:Bound for thetahati - thetahat first order} ensures $\norm{\theta_i^*(\lambda)-\theta_0(\lambda)} \leq R_n + n^{-1}\p{C_1+\delta_n}m_0(Z_i)$, where $R_n = \opr(1)$  uniformly in  $\lambda$ and does not depend on $i$. In addition, the eigenvalues of $\partial_\theta J(\theta, \lambda)^{-1}$ are bounded by $C_2$. Hence, $J\crl{\theta_i^*(\lambda),\lambda}^{-1} = J\crl{\theta_0(\lambda),\lambda}^{-1} + r_n^i(\lambda)$ where $\norm{r_n^i(\lambda)} \leq n^{-1}(C_1+\delta_n)C_2m_0(Z_i)$ uniformly in $\lambda$.
	
	Secondly, we will work with the third factor on the right hand side of \eqref{eq:Second order bound for thetai - theta first}. Taylor expanding the function $\phi\crl{Z_i,\thetahat\noti(\lambda),\lambda}$ around $\theta_0(\lambda)$, reveals
	\begin{equation*}
		\phi\crl{Z_i,\thetahat_{(-i)}(\lambda),\lambda} = \phi\crl{Z_i,\theta_0(\lambda),\lambda} + \partial_{\theta_i^{***}}\phi(Z_i,\theta,\lambda)\crl{\thetahat_{(-i)}(\lambda)-\theta_0(\lambda)},
	\end{equation*}
	for some $\theta_i^{***}(\lambda)$ on the line segment between $\theta_{(-i)}(\lambda)$ and $\theta_0(\lambda)$. Since there exists $m_1(z)$ bounding the norm of each component of $\partial_\theta\phi(z,\theta,\lambda)$ and depending only on $z$, we can use arguments similar to those above to show $\phi\crl{Z_i,\thetahat_{(-i)}(\lambda),\lambda} = \phi\crl{Z_i,\theta_0(\lambda),\lambda} + s_n^i(\lambda)$ where $\norm{s_n^i(\lambda)} \leq n^{-1}(C_1 + \delta_n)m_1(Z_i)m_0(Z_i)$ uniformly in $\lambda$ and for all $i$.
	
	We are now ready to complete the proof. Combining all of the previous arguments shows
	\begin{equation*}
		\thetahat_{(-i)}(\lambda)-\thetahat(\lambda) = -n^{-1}\sqb{J\crl{\theta_0(\lambda),\lambda}^{-1} + r_n^i(\lambda) + \epsilon_n^i}\sqb{\phi\crl{Z_i,\theta_0(\lambda),\lambda} + s_n^i(\lambda)},
	\end{equation*}
	which imply $\thetahat_{(-i)}(\lambda)-\thetahat(\lambda) = -n^{-1}J\crl{\theta_0(\lambda),\lambda}\phi\crl{Z_i,\theta_0(\lambda),\lambda} + E_n^i(\lambda)$ where
	\begin{equation*}
		\norm{E_n^i(\lambda)} \leq n^{-1}\crl{a_{1,n} + a_{2,n}m_0(Z_i) + a_{3,n}m_0(Z_i)^2 + a_{4,n}m_0(Z_i)^2m_1(Z_i)}
	\end{equation*}
	and $a_{j,n} = \opr(1)$ uniformly in $\lambda$ and does not depend on $i$ for $j = 1,2,3,4$. This concludes the proof.
	
\end{proof}

The first part of \cref{lemma:appendix lemma for alternative proof} shows Lemma 1 in the article. We will now use the second part to show Theorem 6.

\subsection{Proof of Theorem 6}
	Taylor's theorem ensures, $\sum_{i=1}^n\psi\p{Z_i,\thetahat\noti} = \sum_{i=1}^n\psi\p{Z_i,\thetahat} + \sum_{i=1}^n\nabla_{\theta_i^*}\psi(Z_i,\theta)^T\p{\thetahat\noti-\thetahat}$ for some $\theta_i^*$s on the line segments between $\thetahat$ and $\thetahat\noti$. By assumption, all conditions of \cref{lemma:appendix lemma for alternative proof} hold true, and hence,
	\begin{equation}\label{eq:Stone proof 1}
		\begin{aligned}
			\sum_{i=1}^n\nabla_{\theta_i^*}\psi(Z_i,\theta)^T\p{\thetahat\noti-\thetahat} =
			-n^{-1}\sum_{i=1}^n\nabla_{\theta_i^*}\psi(Z_i,\theta)^TJ^{-1}\varphi(Z_i,\theta_0) + \sum_{i=1}^n\nabla_{\theta_i^*}\psi(Z_i,\theta)^TE_N^i.
		\end{aligned}
	\end{equation}
	By the same lemma and existence of $p_1$, we have
	\begin{align*}
		\norm*{\sum_{i=1}^n\nabla_{\theta_i^*}\psi(Z_i,\theta)^TE_N^i}&\leq
		a_{1,n}n^{-1}\sum_{i=1}^np_1(Z_i) + a_{2,n}n^{-1}\sum_{i=1}^nm_0(Z_i)p_1(Z_i) \\
		&+ a_{3,n}n^{-1}\sum_{i=1}^nm_0(Z_i)^2p_1(Z_i) + a_{4,n}n^{-1}\sum_{i=1}^nm_0(Z_i)^2m_1(Z_i)p_1(Z_i).
	\end{align*}
	Since the moments of all terms in the above equation exists by assumption and $a_{j,n}=\opr(1)$ for $j=1,2,3,4$, the second term on the right hand side of \eqref{eq:Stone proof 1} is $\opr(1)$.
	
	For the first term of \eqref{eq:Stone proof 1}, note that by Taylor's theorem $\nabla_{\theta_i^*}\psi(Z_i,\theta) = \nabla_{\theta_0}\psi(Z_i,\theta) + H_{\theta_i^{**}}\psi(Z_i,\theta)(\theta_i^{*}-\theta_0)$ for some $\theta_i^{**}$ on the line segment between $\theta_i^*$ and $\theta_0$. By the triangle inequality and the fact that $\theta_i^*$ lies on the line segment between $\thetahat\noti$ and $\thetahat$, $\norm{\theta_i^* - \theta_0}\leq\norm{\thetahat-\theta_0} + \norm{\thetahat-\thetahat\noti}$. Hence, by \cref{lemma:appendix lemma for alternative proof}, we have
	\begin{align*}
		&\norm*{-n^{-1}\sum_{i=1}^n\nabla_{\theta_i^*}\psi(Z_i,\theta)^TJ(\theta_0)^{-1}\varphi(Z_i,\theta_0) - n^{-1}\sum_{i=1}^n\nabla_{\theta_0}\psi(Z_i,\theta)^TJ(\theta_0)^{-1}\varphi(Z_i,\theta_0)}\leq\\
		&\norm{\thetahat-\theta_0}n^{-1}C\sum_{i=1}^np_2(Z_i)m_0(Z_i) +a_{0,n}n^{-1}C\sum_{i=1}^np_2(Z_i)m_0(Z_i)^2,
	\end{align*}
	for some $C$. This expression is $\opr(1)$ since the expected value of both means are finite and $\norm{\thetahat-\theta_0},\,a_{0,n}=\opr(1)$ by \cref{lemma:appendix lemma for alternative proof}. This concludes the proof.

\section{Appendix - When the limit function is flat}\label{section:Flat grense}
In Theorem 1, we assumed that $\lambda_0$ was the unique minimizer of the limiting risk $R(\lambda) = \E\psi\crl{Z,\theta_0(\lambda)}$. In certain cases, however, $\theta_0(\lambda)$, the solution to $\E\varphi\crl{Z,\theta,\lambda} = 0$, is constant as a function of $\lambda$, and as a result, $R(\lambda)$ is flat. This is the case for hybrid models on the form $\varphi(z,\theta,\lambda) = \lambda\varphi_1(z,\theta) + (1-\lambda)\varphi_2(z,\theta)$ when  $\E\varphi_1(Z,\theta)$ and $\E\varphi_2(Z,\theta)$ share roots, e.g.~in the hybrid generative-discriminative model when the generative model is specified correctly or for hybrid combinations of empirical and hybrid likelihood functions (see \citet{daehlen2024hybrid}) under model conditions. In fact, when $R(\lambda)$ is flat, the limit of both TE$(\lambda)$ and CV$(\lambda)$ are constant, and there is no obvious ``best'' value for $\lambda$, making it unclear what $\lambdahat$ even should be aiming for were it consistent. We will now use informal mathematical arguments and simulations to illustrate why a clear and informative limit result for $\thetahat(\lambdahat)$ is hard to find when $\theta_0(\lambda)$ is constant.

Assume that $\theta_0(\lambda)$ is constant and equal to $\theta_0$. To investigate the behaviour of $\sqrt{n}\crl{\thetahat(\lambdahat)-\theta_0}$ we will first take a closer look at the function CV$(\lambda)$. By Theorem 6, the following holds pointwise,
\begin{equation*}
	\textup{CV}(\lambda) = n^{-1}\sum_{i=1}^n\psi\crl{Z_i, \thetahat(\lambda), \lambda} - n^{-1}\Tr\crl{J(\theta_0,\lambda)^{-1}V(\theta_0,\lambda)} + \opr(1/n).
\end{equation*}
Taylor expanding the first term on the right hand side in the above equation, reveals
\begin{align*}
	n^{-1}\sum_{i=1}^n\psi\crl{Z_i, \thetahat(\lambda), \lambda} =& n^{-1}\sum_{i=1}^n\psi\p{Z_i,\theta_0,\lambda} + \crl{\thetahat(\lambda)-\theta_0}^Tn^{-1}\sum_{i=1}^n\nabla_{\theta_0}\psi\p{Z_i,\theta,\lambda}\\
	&+\crl{\thetahat(\lambda)-\theta_0}^T\frac{1}{2n}\sum_{i=1}^nH_{\theta^*}\psi\p{Z_i,\theta,\lambda}\crl{\thetahat(\lambda)-\theta_0},\\
\end{align*}
for some $\theta^*$ on the line segment between $\thetahat(\lambda)$ and $\theta_0$. Assume $\theta_0$ is the true minimizer of $\E\psi(Z,\theta)$, i.e.~that $\E\nabla_{\theta_0}\psi\p{Z,\theta}=0$. Then, arguments similar to those given in the proof of Lemma 4.2  and Theorem 6 show
\begin{align*}
	S_n(\lambda) &= n\p*{\textup{CV}(\lambda)-\frac{1}{n}\sum_{i=1}^n\psi(Z_i,\theta_0)} \\
	&= U_n(\lambda)^TV_n - \Tr\crl{J(\theta_0,\lambda)^{-1}V(\theta_0)} + \frac{1}{2}U_n(\lambda)^TW_nU_n(\lambda)+\opr(1),
\end{align*}
where $U_n(\lambda) = \sqrt{n}\crl{\thetahat(\lambda)-\theta_0}$, $V_n = n^{-1}\sum_{i=1}^n\nabla_{\theta_0}\psi(Z_i,\theta)$, and $W_n=n^{-1}\sum_{i=1}^nH_{\theta_0}\psi(Z_i,\theta)$. Since $\theta_0$ is constant with respect to $\lambda$, we have $\lambdahat = \text{argmin}_\lambda S_n(\lambda)$.

Note that $W_n$ is consistent for $W = \E H_{\theta_0}\psi(Z,\theta)$ by the assumptions of Theorem 6. Furthermore, it  does not depend on $\lambda$. If, in addition, the conditions of \citet[Ex.~19.7]{van2000asymptotic} are fulfilled for $(z,\lambda)\mapsto\p{J\p{\theta_0,\lambda}^{-1}\varphi\p{z,\theta_0,\lambda},\psi\p{z,\theta_0}}$, the process $\lambda\mapsto\p{U_n\p{\theta},V_n}$ converges as a process to a Gaussian process $(U(\lambda),V)$ with covariance structure as described in the beginning of \citet[Sec.~19.2]{van2000asymptotic}. This, in combination with Slutsky's lemma and the continuous mapping theorem, is sufficient for $S_n$ to converge in distribution in Skorokhod space to the following limit process
\begin{equation}\label{eq:Limit process within model conditions}
	S(\lambda) = U(\lambda)^TV - T(\lambda) + \frac{1}{2}U(\lambda)^TWU(\lambda),
\end{equation}
where $T(\lambda) = \Tr\crl{J(\theta_0,\lambda)^{-1}V(\theta_0)}$.

Direct computations show $\E S(\lambda) = (1/2)\Tr\crl{WJ(\lambda)^{-1}K(\lambda)J(\lambda)^{-1}}$. Hence, if there exists a $\lambda_0$ such that $\thetahat(\lambda_0)$ is equal to the maximum likelihood estimator, the Cramer-Rao theorem guarantees that $\lambda_0$ minimizes $\E S(\lambda)$. Since such a $\lambda_0$ often exists in the case where $\theta_0(\lambda)$ is constant, it is tempting to think that $\lambdahat$ converges in probability to this $\lambda_0$. Sadly this conclusion cannot be drawn, as $S$ is a random process and there is no guarantee that minimizing the expected value of $S$ minimizes the process itself. Furthermore, the convergence of $S_n$ towards $S$ is in distribution. Because of this, convergence in probability of $\lambdahat$ towards $\lambda_L$, the minimizer of $S$, does not follow. We can only deduce $\lambdahat\distconv\lambda_L$, and since $S$ is a stochastic process and not a nonrandom function, its minimizer is not guaranteed to be a constant. Because of this, $\lambda_L$ is a random variable and $\lambdahat\distconv\lambda_L$ does not imply convergence in probability. The above therefore does not show consistency of $\lambdahat$ towards $\lambda_0$ or any fixed value for that matter, but rather that the distribution of $\lambdahat$ tends towards that of the minimizer of $S$. 

To illustrate the above, we performed a small simulation experiment for a hybrid LDA-logistic regression model with one covariate and binary response variable. We simulated from an LDA model, under which the true parameter minimizes both the limit of the generative and logistic likelihood. Hence $\theta_0(\lambda)$ is constantly equal to the true parameter in this simulation.  We used a $\psi(y,x,\theta) = \p{y-\exp\crl{\beta(\theta)^Tx}/\sqb{1+\exp\crl{\beta(\theta)^Tx}} }^2$, a smoothed version of the 0 - 1 loss function. With $n = 2000$ we computed $B = 1000$ different realizations of $S_n$. A selection of sample paths is displayed in \cref{fig:Sn simmulation} together with a histogram of the corresponding minima. From the figure we see that the sample paths are indeed not constant and that although the minimum is most often one, other values are repeatedly chosen, implying that $\lambda_L$ might really not be constant in this case.

\begin{figure}
	\centering
	\includegraphics[width=0.95\linewidth]{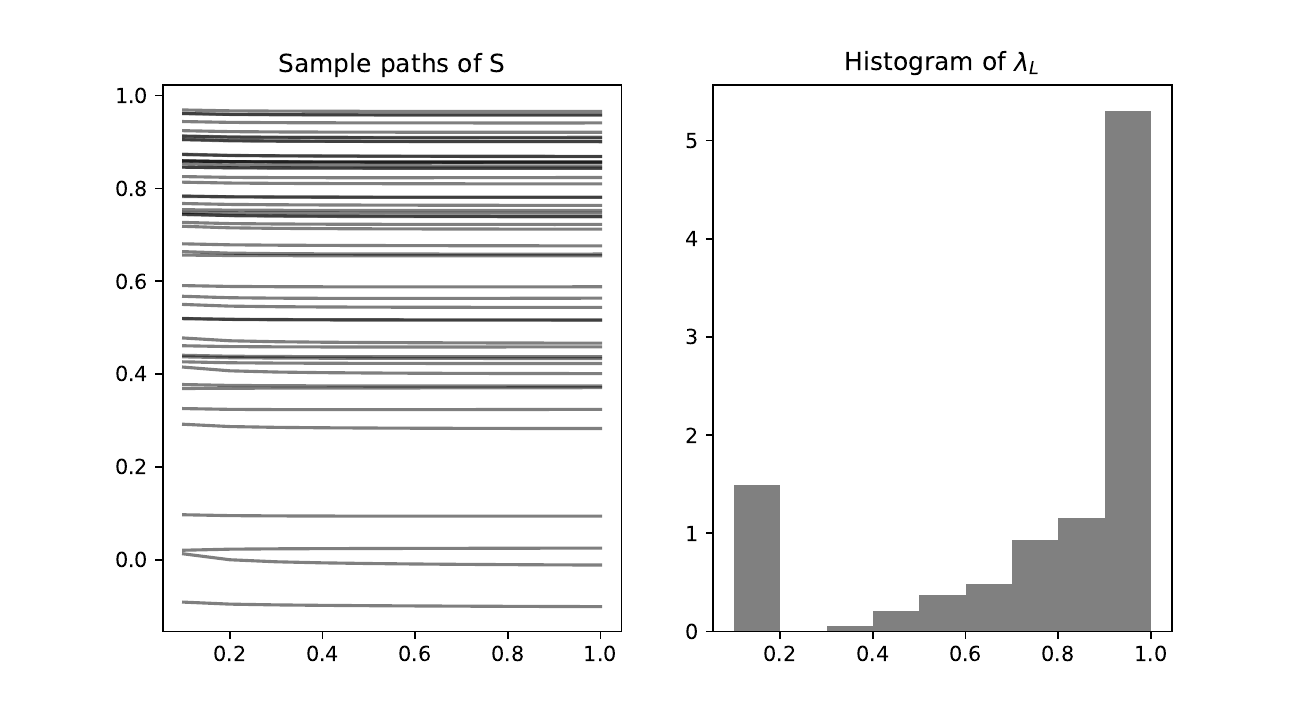}
	\caption{\label{fig:Sn simmulation}The figure shows two plots. To the left $B = 1000$ realizations of $S_n$ with $n = 2000$ are displayed. On the right hand side a histogram of the corresponding minima is shown.}
\end{figure}

Although the above arguments and simulations seem to indicate that there is reasonable doubt that $\lambdahat$ is consistent for any one value, this does not necessarily rule out all hope that $\sqrt{n}\crl{\thetahat(\lambdahat)-\theta_0}$ has a normal limiting distribution. In the simulation experiment described above, we also computed $\sqrt{n}\crl{\thetahat(\lambdahat)-\theta_0}$ in each iteration. Histograms of each component of this vector are displayed in \cref{fig:Thetahat simmulation}. From the plots we note that the distributions at least look quite normal. This intuition is supported by the QQ-plots shown in \cref{fig:QQplot simmulation}. Looking at these plots, we see no obvious deviances from the corresponding normal distributions. 

\begin{figure}
	\centering
	\includegraphics[width=0.95\linewidth]{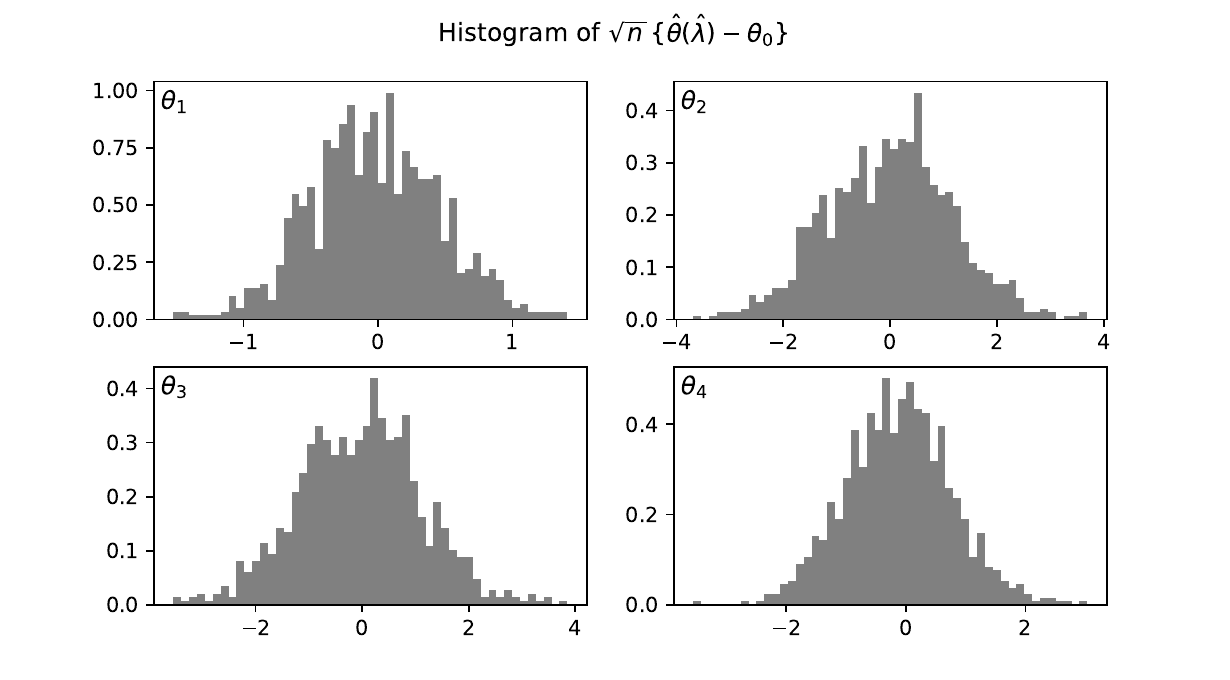}
	\caption{\label{fig:Thetahat simmulation}The figure shows histograms of each component of $\sqrt{n}\crl{\thetahat(\lambdahat)-\theta_0}$.}
\end{figure}

\begin{figure}
	\centering
	\includegraphics[width=0.95\linewidth]{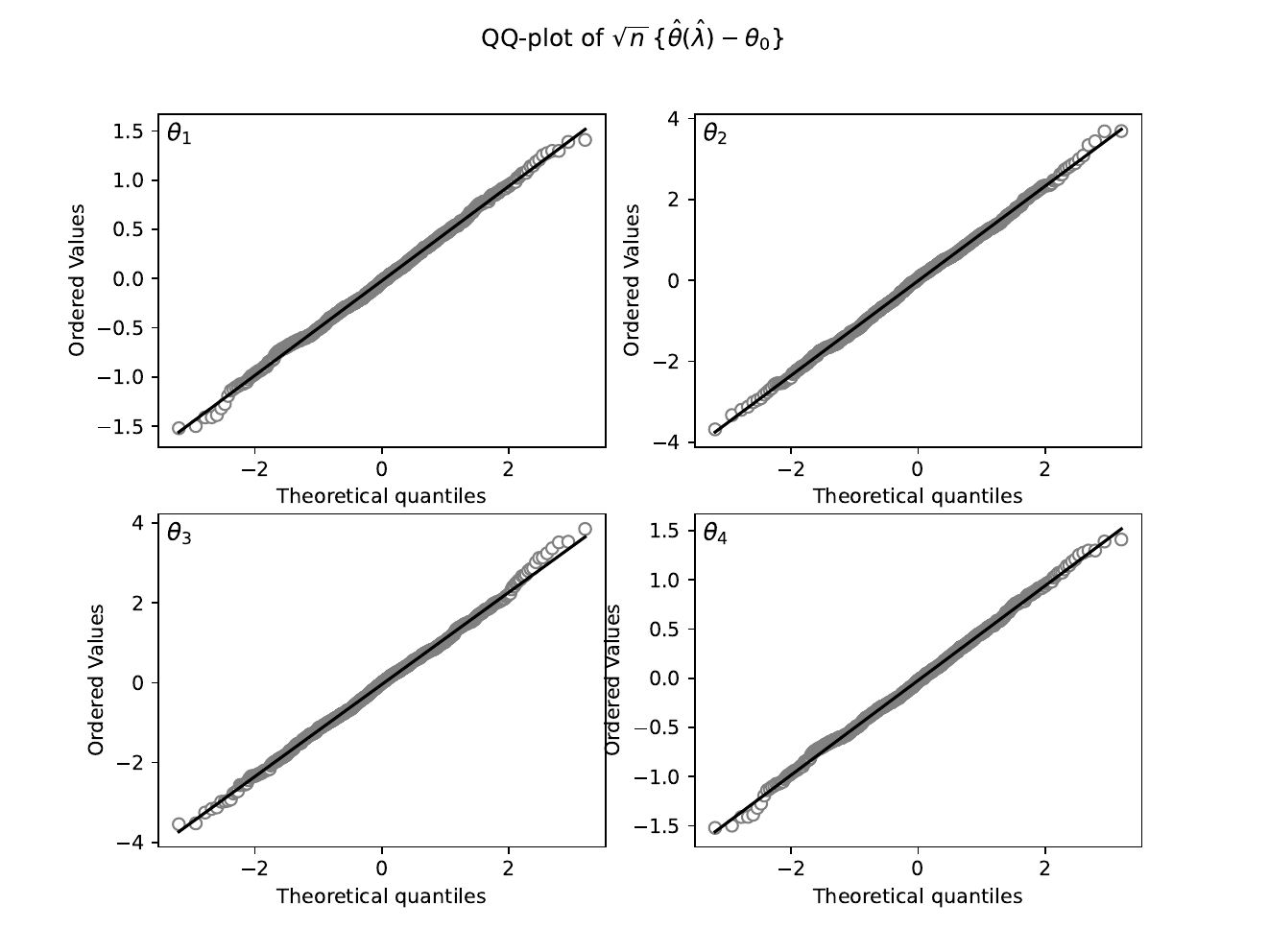}
	\caption{\label{fig:QQplot simmulation}The figure shows QQ-plots for each component of $\sqrt{n}\crl{\thetahat(\lambdahat)-\theta_0}$. The empirical quantiles are compared to those of fitted normal distributions.}
\end{figure}

One possible solution is to work with $A_n(\lambdahat) = \sqrt{n}K(\theta_0,\lambda_0)^{-1/2}J(\theta_0,\lambdahat)\crl{\thetahat(\lambdahat)-\theta_0}$ rather than $\sqrt{n}\crl{\thetahat(\lambdahat)-\theta_0}$. Pointwise, we have $A_n(\lambda)\distconv\text{N}\p{0,I_p}$, and hence, we would expect $A_n(\lambdahat)\distconv\text{N}\p{0,I_p}$ as well. To check whether this idea seems reasonable, we computed $A_n(\lambdahat)$ in each iteration of the simulation setting described above. A histogram of each component of $A_n(\lambdahat)$ is shown in \cref{fig:Thetahat normed simmulation} together with the graph of density function for the standard normal distribution. The plot seem to indicate that $A_n(\lambdahat)$ behaves more or less like a standard normally distributed variable.

\begin{figure}
	\centering
	\includegraphics[width=0.95\linewidth]{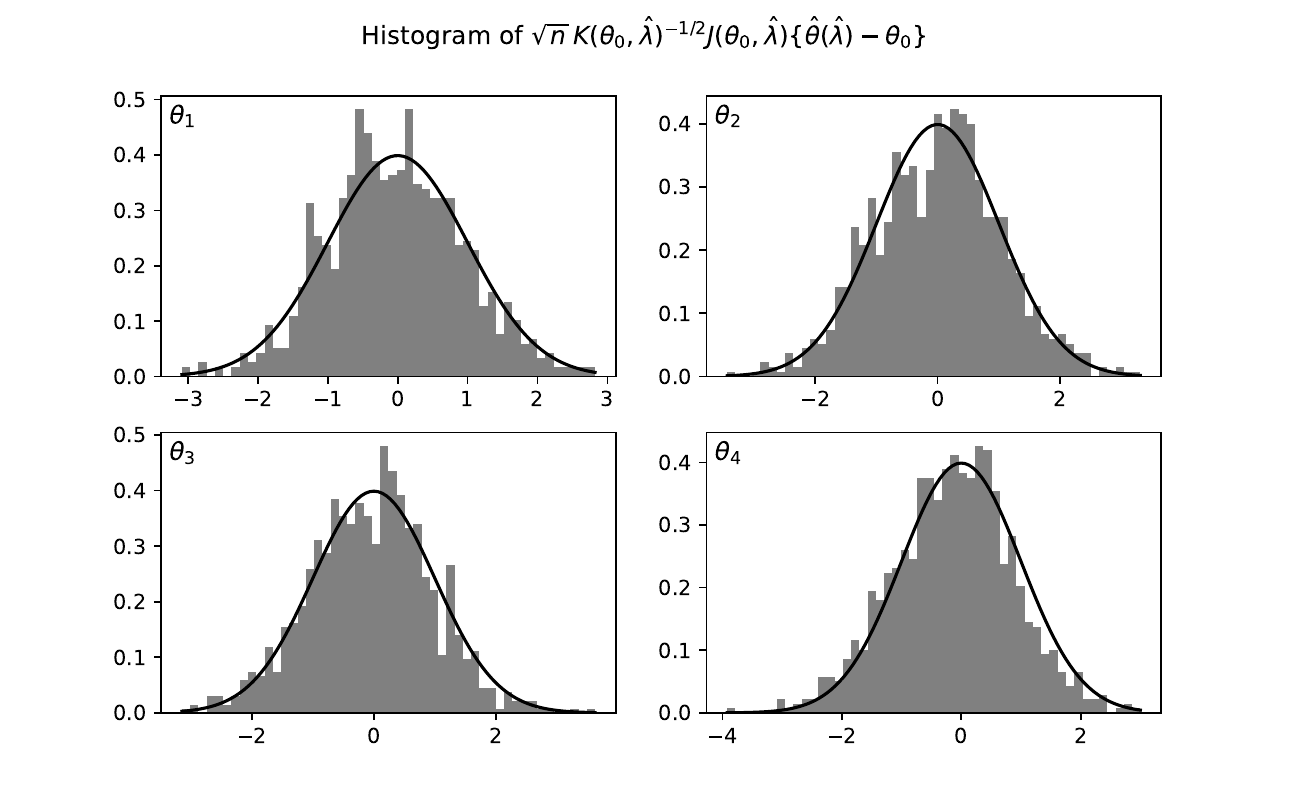}
	\caption{\label{fig:Thetahat normed simmulation}The figure shows histograms of each component of $\sqrt{n}K(\theta_0,\lambdahat)^{-1/2}J(\theta_0,\lambdahat)\crl{\thetahat(\lambdahat)-\theta_0}$.}
\end{figure}

One could argue that  Corollary 1 and Theorem 5 failing to cover situations where the limiting risk function is flat, makes the result and the estimators of Theorem 4 less useful in practical situations. Say that we for instance, have fitted a generative hybrid-discriminative model to some data and want to apply the theory in the main part of this paper to construct confidence intervals and hypothesis tests for the parameters in the model. If the generative model fitted to the data is wrongly specified, this can be done by applying Corollary 1 or Theorem 5 in conjunction with Theorem 4. If the model conditions do hold true, however, the arguments made in this section shows that the results cannot be applied. Hence, we cannot say that Corollary 1, Theorem 5 or the estimators in Theorem 4 are truly model-agnostic, since they fail to apply when the limiting risk function is flat. In theory, this is of course quite problematic. In practice, however, this is less of a problem. When $\theta_0'(\lambda) = 0$ for all values of $\lambda$, $\E\varphi(Z,\theta_0,\lambda)=0$ for all $\lambda$. Because of this, $\thetahat'(\lambdahat)$ of Theorem 3 is consistent for $0$ and $\hat{A}^*$ in Theorem 4 is asymptotically equivalent with $(J^{-1}(\lambdahat),0,0)$. Hence, $\hat{A}^*\hat{K}(\lambdahat)^*\p{\hat{A}^*}^T$ is asymptotically equivalent with $J^{-1}(\lambdahat)K(\lambdahat)J^{-1}(\lambdahat)$ which is what the simulations of this section seem to indicate is the limiting variance of $\sqrt{n}\crl{\thetahat(\lambdahat)-\theta_0}$.

Lastly, we would like to point out that there are situations where $\thetahat(\lambdahat)$ is consistent even if $\theta_0(\lambda)$ is a flat function, see e.g. \cite{arcones2005convergence} where the limit of the risk function used to select $\lambda$ is non-flat in $\lambda$ even when $\theta_0(\lambda)$ is constant. Cases where the limiting risk function is non-constant in $\lambda$ even though $\theta_0(\lambda)$ is flat are not discussed in this paper, but we point out that such estimators do exist and that for such cases the behavior discussed and illustrated in this section is not guaranteed.

\section{The effect of validation sets}\label{section:Validation set}
In this section we will give an informal discussion on how an alternative tuning scheme based on setting aside validation sets affects the asymptotic distribution of $\thetahat(\lambdahat)$. This problem is essentially equivalent to situations with auxiliary information, see \cite{qin2000miscellanea} which have inspired our proofs.

Assume we have i.i.d.~data $Z_1,\ldots,Z_n$ from some distribution $F$. In this section, we will assume that the first $n_1$ data points are used for tuning $\lambda$ and that the remaining $n_2 = n-n_1$ data points are used for estimating $\thetahat$ with the chosen value of $\lambda$, i.e.
\begin{equation*}
	\thetahat(\lambda) = \textup{ zero of}\sum_{i=n_1+1}^{n}\varphi(Z_i,\theta,\lambda)\textup{ and }\lambdahat = \underset{\lambda}{\textup{argmin}}\sum_{i=1}^{n_1}\psi[Z_i,\thetahat(\lambda)].
\end{equation*}
Arguing as in the paragraph preceding Theorem 1 in the main part of the paper, we get
\begin{equation*}
	\hat\alpha= \begin{pmatrix}
		\thetahat(\lambdahat)\\
		\lambdahat\\
		\thetahat'(\lambdahat)
	\end{pmatrix} = \textup{zero of }\begin{pmatrix}
	\sum_{i=n_1+1}^{n}\eta_1(Z_i,\theta,\lambda,D)\\
	\sum_{i=1}^{n_1}\eta_2(Z_i,\theta,\lambda,D)\\
	\sum_{i=n_1+1}^{n}\eta_3(Z_i,\theta,\lambda,D)
	\end{pmatrix},
\end{equation*}
where $\eta_1,\,\eta_2$ and $\eta_3$ are as defined in Theorem 1 in the main part of the paper.

By Lemma~\ref{lemma:Uniform consistency}, $\hat\alpha$ converges in probability towards $\alpha_0$, the zero of
\begin{equation*}
	\alpha\mapsto\begin{pmatrix}
		p\E\eta_1(Z,\alpha)\\
		(1-p)\E\eta_2(Z,\alpha)\\
		p\E\eta_3(Z,\alpha)
	\end{pmatrix},
\end{equation*}
provided $n_1/n\to p$ as $n\to\infty$, and the regularity conditions of Lemma~\ref{lemma:Uniform consistency} hold true. Since $p$ is constant, $\alpha_0$ is equal to the zero of $\alpha\mapsto\E[(\eta_1(Z,\alpha)^T, \eta_2(Z,\alpha)^T,\eta_3(Z,\alpha)^T)]^T$. Hence, $\hat\alpha$, and in particular $\thetahat(\lambdahat)$, is consistent also when tuning with respect to the error on a validation set.

For asymptotic normality, notice that Taylor's theorem ensures
\begin{equation*}
	0 = \begin{pmatrix}
		\sum_{i=n_1+1}^{n}\eta_1(Z_i,\hat\alpha)\\
		\sum_{i=1}^{n_1}\eta_2(Z_i,\hat\alpha)\\
		\sum_{i=n_1+1}^{n}\eta_3(Z_i,\hat\alpha)
	\end{pmatrix} =  \begin{pmatrix}
	\sum_{i=n_1+1}^{n}\eta_1(Z_i,\alpha_0)\\
	\sum_{i=1}^{n_1}\eta_2(Z_i,\alpha_0)\\
	\sum_{i=n_1+1}^{n}\eta_3(Z_i,\alpha_0)
	\end{pmatrix} + (\hat\alpha-\alpha_0)^T\begin{pmatrix}
	\sum_{i=n_1+1}^{n}J\eta_1(Z_i,\alpha^*)\\
	\sum_{i=1}^{n_1}J\eta_2(Z_i,\alpha^*)\\
	\sum_{i=n_1+1}^{n}J\eta_3(Z_i,\alpha^*)
	\end{pmatrix},
\end{equation*}
for some $\alpha^*$ on the line segment between $\hat\alpha$ and $\alpha_0$. Hence,
\begin{equation*}
	\sqrt{n}(\hat\alpha-\alpha_0) = -\left[\begin{pmatrix}
		\frac{1}{n}\sum_{i=n_1+1}^{n}J\eta_1(Z_i,\alpha^*)\\
		\frac{1}{n}\sum_{i=1}^{n_1}J\eta_2(Z_i,\alpha^*)\\
		\frac{1}{n}\sum_{i=n_1+1}^{n}J\eta_3(Z_i,\alpha^*)
	\end{pmatrix}^{-1}\right]^T\frac{1}{\sqrt{n}}\begin{pmatrix}
	\sum_{i=n_1+1}^{n}\eta_1(Z_i,\alpha_0)\\
	\sum_{i=1}^{n_1}\eta_2(Z_i,\alpha_0)\\
	\sum_{i=n_1+1}^{n}\eta_3(Z_i,\alpha_0)
	\end{pmatrix}.
\end{equation*}
Assuming $J\eta_j$ for $j=1,2,3$ are sufficiently regular, we therefore have
\begin{equation*}
	\sqrt{n}(\hat\alpha-\alpha_0) = -\left[\begin{pmatrix}
		(1-p)\E\,J\eta_1(Z,\alpha_0)\\
		p\E\,J\eta_2(Z,\alpha_0)\\
		(1-p)\E\,J\eta_3(Z,\alpha_0)
	\end{pmatrix}^{-1}\right]^T\begin{pmatrix}
		(1-p)^{1/2}n_2^{-1/2}\sum_{i=n_1+1}^{n}\eta_1(Z_i,\alpha_0)\\
		p^{1/2}n_1^{-1/2}\sum_{i=1}^{n_1}\eta_2(Z_i,\alpha_0)\\
		(1-p)^{1/2}n_2^{-1/2}\sum_{i=n_1+1}^{n}\eta_3(Z_i,\alpha_0)
	\end{pmatrix}+\opr(1).
\end{equation*}

The above shows that $\sqrt{n}(\alphahat-\alpha_0)$ is a linear combination of terms which converge towards normally distributed variables at the speed of $O(n_1^{-1/2}) = \Opr(pn^{-1/2})$ or $O(n_2^{-1/2}) = \Opr((1-p)n^{-1/2})$. Assuming $0<p<1$, the convergence rate is therefore reduced from a speed of $n^{-1/2}$ to $pn^{-1/2}$ or $(1-p)n^{-1/2}$ when using a validation set to tune $\lambda$ rather than cross validation. This is a relatively natural result as $np$ and $n(1-p)$ are approximately equal to the amount of data which is set aside for fitting the model and tuning $\lambda$ respectively.

\bibliography{references}